\newtheorem{theorem}{Theorem}[section]
\newtheorem{lemma}[theorem]{Lemma}
\newtheorem{proposition}[theorem]{Proposition}
\newtheorem{corollary}[theorem]{Corollary}
\theoremstyle{definition}
\theoremstyle{remark}
\newtheorem*{remark}{Remark}
\def\paragraph#1{\noindent \textbf{#1}}
\numberwithin{equation}{section}
\def\d{\mathrm{d}}
\def\<{\langle}
\def\>{\rangle}
\def\a{\alpha}
\def\e{\epsilon}
\def\k{\kappa}
\def\R{{\Bbb R}}  
\def\N{{\Bbb N}}  
\def\Z{{\Bbb Z}}  
\def\Q{{\Bbb Q}}  
\def\C{{\Bbb C}}  
\def\H{{\Bbb H}}
\let\cal=\mathcal
 \def \k {{\kappa}}
 \def \e {{\varepsilon}}
 \def \d {{\delta}}
 \def \a {{\alpha}}
 \def \ba {\begin{array}}
 \def \ea {\end{array}}
 \newcommand{\be}{\begin{equation}}
 \newcommand{\ee}{\end{equation}}
\newcommand{\bea}{\begin{eqnarray}}
 \newcommand{\eea}{\end{eqnarray}}
\def\TH(#1){\label{#1}}\def\thv(#1){\ref{#1}}
\def\Eq(#1){\label{#1}}\def\eqv(#1){(\ref{#1})}
 \def \1{\mathbbm{1}}
\begin{document}

 \title[Schmidt Games and Conditions on Resonant Sets]
{Schmidt Games and Conditions on Resonant Sets}

\author[S. Weil]{Steffen Weil}
\address{S. Weil\\ Institut f\"ur Mathematik der Universit\"at Z\"urich\\}
\email{steffen.weil@math.uzh.ch}

\subjclass[2000]{11J83; 11K60; 37C45; 37D40} 
\keywords{}
\date{\today}


\begin{abstract} 
Winning sets of Schmidt's game enjoy a remarkable rigidity.
Therefore, this game (and modifications of it) have been applied to many examples of complete metric spaces $(X,d)$ 
to show that the set of 'badly approximable points' \textbf{Bad}$(\cal{F})$, 
with respect to a given family $\cal{F}$ of resonant sets  in $X$,
is a winning set.
For these examples, strategies were deduced that are, in most cases, strongly adapted to the specific dynamics and properties of
the underlying setting.
We introduce a new modification of Schmidt's game which combines and generalizes the ones of \cite{KleinbockWeiss} and  \cite{McMullen}.
We then axiomatize conditions on the collection of resonant sets under which we can show \textbf{Bad}$(\cal{F})$ to be a winning set for the modification.
Moreover, we discuss properties of winning sets of this modification
and verify our conditions for several examples - 
among them, the set \textbf{Bad}$^{\bar r}$ of badly approximable vectors in $\R^n$, $\C^2$ and $\Z_p^2$, intersected with 'nice fractal sets', with weights $\bar r$
and, as a main example, the set of geodesic rays in a proper geodesic CAT(-1) space which avoid a suitable collection of convex subsets.
\end{abstract} 

\maketitle

\section{Introduction and Main Result}

\subsection{Introduction}
We begin with a motivation.
Let $(\bar X,d)$ be a metric space, $\mu$ a Borel probability measure and $T: \bar X \to \bar X$ an ergodic measure-preserving transformation.
Let $A\subset \bar X$ be a set of positive $\mu$-measure. 
Then, for $\mu$-almost every point $x\in \bar X$, the orbit of $x$ hits $A$ infinitely many times. 
The \emph{shrinking target problem}, due to Hill and Velani \cite{HillVelani}, 
considers sets shrinking in time.
More precisely, one considers a  sequence of nested measurable sets $A_n \subset X$ 
and is interested in the properties of the points in $ X$ whose orbit hits $A_n$ for infinitely many times $n$.
Such points are called \emph{well approximable} in analogy with Diophantine approximation.

For instance, identify the one point compactification $\bar \R = \R \cup \{\infty\}$ 
with the unit tangent space at a suitable point of the modular surface $\H^2/SL_2(\Z)$.
Then, the well approximable real numbers in the classical sense 
correspond to geodesics which enter a shrinking neighborhood of the only cusp of $\H^2/SL_2(\Z)$ infinitely often.
This is a set of full Lebesgue-measure.
Conversely, a badly approximable real number corresponds to a geodesic which avoids (i.e. does not enter) a certain neighborhood of the cusp.
The set of badly aproximable numbers is of Lebesgue-measure zero, yet of full Hausdorff-dimension and in fact a winning set for Schmidt's game.

Considering the lifts of the cusp neighborhood of  $\H^2/SL_2(\Z)$ to $\H^2$ - or rather their shadows in $\bar \R$ with respect to a given base point -
this motivates the following question.
Given a countable index set $\Lambda$, consider a family of sets $\{R_{\lambda} \subset \bar X : \lambda \in \Lambda\}$,
called \emph{resonant sets}, together with a family of contractions 
$\{ \psi_{\lambda} :  \R^+ \to \bar X : \lambda \in \Lambda\}$, 
where $R_{\lambda} \subset \psi_{\lambda}(t+s) \subset \psi_{\lambda}(t)$ for all  $t, s>0$.
Denote this family by $\cal{F}= (\Lambda, R_{\lambda}, \psi_{\lambda})$.
Given moreover a subset $X\subset \bar X$, 
define the set of  \emph{badly approximable points} in $X$ with respect to the family $\cal{F}$ by
\be
\nonumber
	\textbf{Bad}_X(\cal{F}) \equiv \{x\in X : \exists \ c =c(x) < \infty \text{ such that } x  \not \in \bigcup_{\lambda \in \Lambda} \psi_{\lambda}(c)  \};
\ee
that is the set of points $x$ in $X$ which are not contained in the uniformly shrinked neighborhoods, 
depending on the \emph{approximation constant} $c(x)$, of the family $\cal{F}$.
In this paper, we are interested in the question on what properties the set $ \textbf{Bad}_X(\cal{F})$ admits.

In a suitable framework, Kristensen, Thorn, Velani \cite{KristensenEtAl} already showed that $\textbf{Bad}_X(\cal{F})$ is of 'full' Hausdorff-dimension, that is the one of the space $X$.
We want to strengthen this result in two, somewhat 'orthogonal', directions.
On the one hand, non-trivial bounds on the Hausdorff-dimension of the set of badly approximable points with respect to a given upper bound on the approximation constants are determined in the author's work \cite{Weil3}.
On the other hand, in this paper, we use a different approach via modified Schmidt games where, at least in a reasonably nice setting, 
full Hausdorff-dimension is a property of winning sets of these games (among others, see Subsection \ref{SchmidtGame}).
In fact, winning sets of Schmidt's game (and modifications of it, called Schmidt games) enjoy a remarkable rigidity 
which has been exploited by many authors. 
This can be seen from the list 
\cite{An,Aravinda,AravindaLeuzinger,BBFKW,BFK,BroderickEtAl,Dani,Dani2,DaniShah,EinsiedlerTseng,Fishman,FishmanSimonsUrbanski,KB2,KleinbockWeiss,MasurEtAl,McMullen,Nesharim, Schmidt,Tseng}. 
However, in most cases strategies are deduced which are strongly adapted to the specific 
dynamics and properties of the considered example.
The purpose of this paper is the following.
Firstly, we introduce a modification of Schmidt's game 
which combines and generalizes  the ones of Kleinbock, Weiss \cite{KleinbockWeiss} and
McMullen \cite{McMullen} (as well as Broderick et al. \cite{BroderickEtAl}). 
Secondly, we abstractize conditions on a given collection of resonant sets and on the metric space $X$ in $\bar X$,
under which we determine explicit winning strategies with respect to the set $\textbf{Bad}_X(\cal{F})$ for this modified game.
Thirdly, we verify our conditions and obtain new or improve several known examples and results.

We emphasize that Schmidt's game remains a 'technique' since the obtained axiomatization guaranteeing a winning strategy is, 
of course, \emph{not} applicable to every example.
Nevertheless, confirmed by the applications in Section \ref{Apps}, at least  in appropriate settings it yields a significant simplification of the proofs and, 
by focussing on the conditions rather than determining a winning strategy, leads to new results.
Moreover, we point out that in the Euclidean setting, when $\bar X=\R^n$ is the Euclidean space, 
already Dani in \cite{Dani, Dani2}, Dani and Shah  \cite{DaniShah},
 as well as Fishman \cite{Fishman} deduced conditions under which \textbf{Bad}$_X(\cal{F})$ is a winning set.
Their conditions - as well as ours, compare with \eqref{Condition1} and \eqref{Condition2} below - 
concern mainly the (local) structure and distribution of both, the space $X$ in $\bar X$ and the resonant sets; 
for the precise statements see Theorem 3.2 in \cite{Dani}  and Theorem 2.2 in \cite{Fishman}. 

\subsection{Illustration of the main result}
We now give a first version of our main result, where we restrict to the 'standard' contractions defined below.

More precisely, let $(\bar X, d)$ be a proper metric space.
Fix $\sigma>0$ and $t_*\in \R$.
For a countable index set $\Lambda$,
let $\{R_{\lambda} \subset \bar X: \lambda \in \Lambda\}$ be a collection of resonant sets, 
where to each $R_{\lambda}$ we assign a \emph{size} $s_{\lambda}\geq t_*$ (also called  \emph{height}),
which determines the contraction
\be
\nonumber
	\psi_{\lambda}(c) \equiv \cal{N}_{e^{- \sigma (s_{\lambda}+c)}}(R_{\lambda}), \ \ \ c\geq 0. 
\footnote{ Here and in the following, given a metric space $\bar X$, $B(x,r) \equiv \{ y \in \bar X: d(x,y)\leq r\}$, $r>0$, is the closed ball around $x\in \bar X$
and $\cal{N}_{\e}(A) \equiv \cup_{x \in A}B(x,r)$ is the $\e$-neighborhood of a set $A \subset \bar X$.
}
\ee
Suppose that the resonant sets are \emph{nested} with respect to their sizes, that is, if $s_{\lambda}\leq s_{\beta}$ then $R_{\lambda} \subset R_{\beta}$, 
and that the sizes $\{s_{\lambda} \}\subset (t_*, \infty)$ are discrete.

Given a collection $\cal{S} \subset \cal{P}(\bar X)$ of subsets of $\bar X$, consider the following conditions
on a closed  subset $X\subset \bar X$ and the family $\cal{F} = (\Lambda, R_{\lambda}, s_{\lambda})$.
Firstly, $X$ is called \emph{$b_*$-diffuse with respect to $\cal{S}$}, for some $b_*\geq 0$, 
if for $S \in \cal{S}$ and any closed metric ball $B(x, e^{- \sigma t})$, $x\in X$, $t>t_*$, 
there exists $y \in X$ such that
\be
\label{Condition1}
	B(y, e^{- \sigma (t + b_*)}) \subset B(x, e^{- \sigma t}) - \cal{N}_{e^{- \sigma(t + b_*)}}(S).
\ee
When $\cal{S}$ denotes the collection of affine hyperplanes in  $\bar X = \R^n$, the definition is due to Broderick et al. \cite{BroderickEtAl} and $X$ is called \emph{hyperplane diffuse};
examples of such sets include the supports of absolutely decaying measures (see Subsection \ref{DiffuseSpaces} and \cite{LindenstraussEtAl}), in particular 'nice fractal sets' such as the Sierpinski gasket, Koch's curve or regular Cantor sets.
If $X$, with the induced metric, is  \emph{uniformly perfect} (see again Subsection \ref{DiffuseSpaces} for details), 
then it is $b_*$-diffuse with respect to the collection of points in $\bar X$;
for instance, the limit set of a non-elementary finitely generated Kleinian group is uniformly perfect \cite{JarviVuorinen}.

Secondly, the family $\cal{F}$ is \emph{locally contained in $\cal{S}$} 
if, given $B=B(x, e^{-\sigma t})$, $x \in X$, $t>t_*$ and $\lambda \in \Lambda$ with $s_{\lambda}\leq t$,%
\footnote{ Since the resonant sets are nested, it suffices for $\lambda_t \in \Lambda$ such that $s_{\lambda_t}$ is the maximal size with $s_{\lambda_t} \leq t$. }
there exists a set $S\in \cal{S}$
such that 
\be
\label{Condition2}
	B \cap R_{\lambda} \subset S.
\ee
Note that, if $\cal{S}$ is the collection of points in $\bar X$, 
then \eqref{Condition2} is satisfied if
for every $\lambda \in \Lambda$ and for any two distinct points $x$, $y \in R_{\lambda}$, 
we have
\be
\label{IntroductionDistinct}
	d(x,y)> 2 e^{-\sigma s_{\lambda}}.
\ee

Under these conditions we can determine an explicit winning strategy and show the following;  
for the proof, see Theorem \ref{Winning} and Theorem \ref{DiffuseProposition}.

\begin{theorem}
\label{IntroductionThm}
Suppose that $X$  is a closed subset of a proper metric space $\bar X$ which is $b_*$-diffuse with respect to a collection $\cal{S}$ of subsets of $\bar X$.
Moreover, let  $\cal{F}=(\Lambda, R_{\lambda}, s_{\lambda})$ be a family as above  with nested resonant sets and discrete sizes
such that $\cal{F}$ is locally contained in $\cal{S}$.
Then,  \textbf{Bad}$_X(\cal{F})$ is a winning set for Schmidt's game.
\end{theorem}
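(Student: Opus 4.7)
My plan is to exhibit an explicit winning strategy for Alice in Schmidt's $(\alpha,\beta)$-game with $\alpha$ depending only on $b_*$ and $\sigma$ (with a small safety factor accounting for the discreteness of the sizes), and arbitrary $\beta$. The target is to force the unique limit point $x = \bigcap_n B_n$ to lie in $\textbf{Bad}_X(\cal{F})$ with a uniform approximation constant $c$ that is explicit in the parameters.

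Setup: Let Bob's ball at round $n$ be $B_n = B(x_n, r_n)$ with $r_n = (\alpha\beta)^n r_0$, and write $r_n = e^{-\sigma t_n}$. I would take $\alpha = e^{-\sigma b_*}$ so that Alice's legal radii match exactly the ``diffuse scale.'' For each size $s$ appearing in $\{s_\lambda\}$, let $n(s)$ be the first round with $t_{n(s)}\geq s$. The discreteness of $\{s_\lambda\}$ ensures the $n(s)$ are distinct and well-separated for arbitrary but fixed $\beta$; at all other rounds Alice plays any admissible sub-ball.

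Active rounds: At round $n(s)$, by the footnoted remark it suffices to consider the maximal $\lambda$ with $s_\lambda \leq t_{n(s)}$; since we chose $n(s)$ with $t_{n(s)} \geq s$, this covers the resonant sets of size exactly $s$. Apply local containment to $B_{n(s)}$ to obtain $S \in \cal{S}$ with $B_{n(s)} \cap R_\lambda \subset S$. Then apply $b_*$-diffuseness of $X$ with respect to $\cal{S}$ at scale $t_{n(s)}$ to find $y \in X$ with
\[
B\bigl(y, e^{-\sigma(t_{n(s)}+b_*)}\bigr) \subset B_{n(s)} - \cal{N}_{e^{-\sigma(t_{n(s)}+b_*)}}(S).
\]
Alice picks this ball as $A_{n(s)}$, which has the correct radius $\alpha r_{n(s)}$. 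Any point in $A_{n(s)}$ is at distance $\geq e^{-\sigma(t_{n(s)}+b_*)}$ from $S$, hence from $R_\lambda \cap B_{n(s)}$. The uniform constant $c$ is then read off from the relation $t_{n(s)}+b_* \leq s + c$, giving $c = b_* - \log(\alpha\beta)/\sigma$.

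Concluding and main obstacle: The unique limit point $x\in\bigcap_n B_n$ lies in $X$ (which is closed and has centers of $A_n$ in $X$), and at round $n(s_\lambda)$ the above estimate yields $d(x,R_\lambda) > e^{-\sigma(s_\lambda+c)}$, so $x\in \textbf{Bad}_X(\cal{F})$. The delicate point --- and the place I expect the main difficulty --- is that the preceding estimate controls distance only to $R_\lambda \cap B_{n(s_\lambda)}$, not to the part of $R_\lambda$ lying just outside $B_{n(s_\lambda)}$ near its boundary. The standard fix is to apply the containment/diffuseness argument one round earlier (at a scale slightly larger than $e^{-\sigma s_\lambda}$), so that the enclosing ball already contains, with a buffer, every point of $R_\lambda$ that could be within distance $e^{-\sigma(s_\lambda+c)}$ of $A_{n(s_\lambda)}$. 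Calibrating this buffer against the shrinking step $-\log(\alpha\beta)/\sigma$ and the discreteness gap of $\{s_\lambda\}$ is what dictates the precise choice of $\alpha$ (and the resulting $c$); once this calibration is done, the strategy is $\alpha$-winning for every $\beta\in(0,1)$, proving that $\textbf{Bad}_X(\cal{F})$ is a winning set for Schmidt's game.
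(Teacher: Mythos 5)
Your overall structure mirrors the paper's: play Schmidt's $(\alpha,\beta)$-game, at each ``active'' round apply local containment to find $S\in\cal{S}$ and then $b_*$-diffuseness to find an $S$-avoiding sub-ball, and finally read off an explicit approximation constant for the limit point. You also correctly identify where the real work lies: the avoiding ball $A_{n(s)}$ is bounded away from $S\supset R_\lambda\cap B_{n(s)}$, but points of $R_\lambda$ just outside $B_{n(s)}$ can sit arbitrarily close to $A_{n(s)}$ since $A_{n(s)}$ may hug the boundary $\partial B_{n(s)}$.

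However, the fix you propose does not work as stated, and it is not a harmless calibration. You suggest applying the containment argument ``one round earlier, at a scale slightly larger than $e^{-\sigma s_\lambda}$,'' but local containment \eqref{Condition2} requires $s_\lambda\leq t$, i.e.\ the ball radius must be \emph{at most} $e^{-\sigma s_\lambda}$; an enclosing ball strictly larger than $e^{-\sigma s_\lambda}$ is not in the scope of the hypothesis. Moreover, the relevant size $s_{\lambda_{t_n}}$ can be arbitrarily close to $t_n$ from below — discreteness in the sense of the paper only bounds the number of sizes below any threshold, it gives \emph{no} lower bound on the gaps between consecutive sizes, so there is no ``discreteness gap'' to calibrate against. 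The correct maneuver, which is what the paper does in the proof of Theorem~\ref{DiffuseProposition}, is the opposite: apply local containment to the \emph{current} ball $B(x,e^{-\sigma t})$ to get $S$, but apply $b_*$-diffuseness to a further-\emph{shrunk} ball $B(x,e^{-\sigma(t+d_*)})$ with $d_*$ a fixed separating constant (the paper's $d_*$-separating property, which for $B_\sigma$ holds automatically with $d_*\leq\log 3/\sigma$ by the triangle inequality). Then the part of $R(t)$ outside $B(x,e^{-\sigma t})$ is automatically at distance $\gtrsim e^{-\sigma t}(1-e^{-\sigma d_*}) > e^{-\sigma(t+d_*+b_*)}$ from the shrunk ball, while the part inside is handled by $S$. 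This changes your advertised $\alpha$ from $e^{-\sigma b_*}$ to $e^{-\sigma(b_*+d_*)}$, and it also changes the final constant $c$; additionally, $c$ must absorb $t_1-s_1$ in case Bob's initial ball is already smaller than the smallest resonant scale (in the paper this is the role of the integer $m_*$ in the proof of Theorem~\ref{Winning}). So while your approach is directly at the $(\alpha,\beta)$-game level rather than factoring through the weak game and Lemma~\ref{AbsWinWin} as the paper does, the proposal as written has a genuine gap at exactly the step you flagged: you would need to import the separating/triangle-inequality argument to close it.
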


\noindent  Note that in this setting, \textbf{Bad}$_X(\cal{F})$ is in fact \emph{absolute winning with respect to $\cal{S}$}, a winning set for a modified game (see Subsection \ref{ModifiedGame} for the definition and details).

Among the examples from Section \ref{Apps}, the above theorem already applies to and simplifies the proofs of the following ones 
that will be discussed in more detail and in greater generality.
First, for $k\in \Lambda \equiv \N_{\geq 2}$ we define the set of rational vectors $R_k$ with size $s_k$ by  
\be
\nonumber
	R_{k} \equiv \{ \bar p/q : \bar p \in \Z^n, 0<q <k \}, \ \ \ \  s_k \equiv \log(k) + \log(n!\cdot 2^n),
\ee
which gives a nested and discrete family $\cal{F}$.
It is readily checked that, for $\sigma= 1+1/n$, \textbf{Bad}$_X(\cal{F})$ equals the set of badly approximable vectors 
\textbf{Bad}$^n_{\R^n}$ in a subset $X$ of $\R^n$ (see Subsection \ref{BadN} for details).
The Simplex Lemma (see Lemma \ref{SimplexLemma}) implies that, given $B=B(x, e^{-(1+1/n) t})$ and $k \in \N$ with $s_k \leq t$,
then $B \cap R_{k}$ is contained in an affine hyperplane.
Hence, Theorem \ref{IntroductionThm} shows that  \textbf{Bad}$^n_{\R^n} \cap X$ is a winning set for Schmidt's game 
for any hyperplane diffuse set $X \subset \R^n$.  
In particular,  \textbf{Bad}$^n_{\R^n} \cap X$ is hyperplane  absolute winning (see \cite{BroderickEtAl}) and, 
if $n=1$, then \eqref{IntroductionDistinct} holds so that \textbf{Bad}$^1_{\R} \cap X$ is a winning set for McMullen's game.
\\
Similar arguments apply to the sets of badly approximable vectors in $\R^n$, $\C^2$, $\Z^2_p$ respectively with weights for the modified game,
achieving new results (see Section \ref{Apps}).

Second, given a countable collection of pairwise disjoint horoballs $H_l$ in the real hyperbolic upper half space $\H^{n+1}$ 
tangent to the points $x_l \in \R^n$ and of Euclidean radius $1 \geq r_l >0$,%
\footnote{
For the example from the motivation, consider the collection of pairwise disjoint horoballs given by the collection  $B_{p/q} \subset \H^2$, 
where $B_{p/q}$ is an Euclidean ball tangent to $p/q\in \Q$ (with $p$, $q$ coprime) of radius $1/2 q^{-2}$.
Note that every such ball is a cover of the standard cusp neighborhood of the modular surface $\H^2/SL_2(\Z)$.
Moreover, we have $\textbf{Bad}_{\R}(\cal{F}) =  \textbf{Bad}_\R^1$.
}
define for $k\in \Lambda \equiv \N$,
\be
\nonumber
	R_{k} \equiv \{x_l \in \R^n: r_l \geq e^{-k} \}, \ \ \ \  s_k \equiv k + \log(2),
\ee
which again gives a nested and discrete family $\cal{F}$ in $\bar X=\R^n$.
Clearly, the disjointness of the horoballs shows that \eqref{IntroductionDistinct} is satisfied for $\sigma =1$.
Thus, for any uniformly perfect set $X$ in $\R^n$, Theorem \ref{IntroductionThm} implies that  \textbf{Bad}$_X(\cal{F})$ is a winning set for Schmidt's and in fact for McMullen's game.
Moreover,
\textbf{Bad}$_X(\cal{F})$ corresponds to the set of vertical geodesic lines in  $\H^{n+1}$ with endpoints in $ X$, 
for each of which the sequence of penetration lengths in the horoballs $H_l$ is bounded 
or, in other words, avoids the same collection of uniformly shrinked horoballs
(for further details and background, see Subsection \ref{CAT(-1)}). 
This already simplifies and shortens the proof of McMullen (compare with \cite{McMullen}) significantly.
\\
Similarly, as a main example, we consider the set of geodesic rays avoiding a suitable collection of convex sets  in a proper geodesic CAT(-1)-space, such as a collection of geodesic lines or even 'higher-dimensional' subspaces, which again achieves new results.
\\

\noindent \emph{Outline of the paper.}
In Section \ref{SectionSchmidtGames}, we first recall the $\psi$-modified Schmidt game due to \cite{KleinbockWeiss} and its properties (Subsection \ref{SchmidtGame}).
We introduce our modified version of the game in this setting and deduce properties of winning sets for this game (Subsection \ref{ModifiedGame}).
Moreover, we consider different conditions on the collection of resonant sets and on the metric space under which 
the set of badly approximable points is a winning set for the respective versions of the game (Subsection \ref{Strategy}).
Finally, we discuss  diffusion properties of the space $X$, consider suitable (absolutely decaying) measures supported on $X$, 
and,  the structure and distribution of the resonant sets 
under which the deduced conditions are satisfied (Subsection \ref{DiffuseSpaces}).

In Section \ref{Apps}, we verify the conditions for several examples, where we distinguish between examples coming from number theory and the ones coming from dynamical systems:
For the first part, we consider the set of badly approximable vectors in $\R^n$, $\C^2$ and $\Z_p^2$ with weights (see Subsections \ref{BadRN}, \ref{BadCn}, \ref{BadZn} respectively).
For the second part, we consider the set of sequences in the Bernoulli-shift which avoid periodic sequences (Subsection \ref{Bernoulli}) and the set of orbits of a sequence of matrices avoiding a sequence of separated sets (Subsection \ref{ToralEndo}).
Moreover, in more detail, we consider the set of geodesics in a proper geodesic CAT(-1)-space 
which avoid certain convex subsets such as a collection of disjoint horoballs or neighborhoods of geodesic lines or of a separated set (see Subsection \ref{CAT(-1)}).
\\

\noindent \emph{Acknowledgment.}
The author wants to thank his advisor, Viktor Schroeder, for helpful discussions, comments and for introducing him to Schmidt's game.
Moreover, he is grateful to Jouni Parkkonen and Jean-Claude Picaud for their questions which furtherly aroused his interest in this subject.
In particular, he wants to express his gratitude to Dmitry Kleinbock and Barak Weiss for many helpful suggestions which led to further results and improvements.
Finally, the author acknowledges the support by the Swiss National Science Foundation (Grant: 135091).

\section{Schmidt Games on Paramater Spaces}
\label{SectionSchmidtGames}

In this section, we combine two versions of Schmidt's game due to  \cite{KleinbockWeiss} and  \cite{McMullen} 
in order to introduce a new modification.
We first introduce but modify the setting of this section which is the notion of \cite{KleinbockWeiss}.
Let $(X,d)$ be a complete metric space.
Fix $t_* \in \R \cup \{- \infty\}$ and define $\Omega \equiv X \times (t_*, \infty)$, the set of \emph{formal balls} in $X$.
Let $\cal{C}(X)$ be the set of nonempty compact  subsets of $X$
and assume we are given a function $\psi : \Omega  \to \cal{C}(X)$
such that, for all $(x, t) \in \Omega$ and for all $s\geq 0$, we have
\be
\label{Mono}
	\psi(x,t+s) \subset \psi(x,t).
\ee
We can hence view $\Omega$ as parameter space for the function $\psi$ which we call \emph{monotonic}.

For instance, if $X$ is proper, set $t_* = - \infty$ and 
for $x\in X$, $r>0$, let  $B(x,r) \equiv \{y\in X: d(x,y)\leq r\} \in \cal{C}(X)$.
For $\sigma>0$, the \emph{standard function $\bar \psi_{\sigma} \equiv B_{\sigma}$} is given by the monotonic function 
\be
\label{Standard}
	B_{\sigma} : X \times (-\infty, \infty) \to \cal{C}(X), \ \ \ \ B_{\sigma} (x,t) \equiv B(x, e^{- \sigma t}).
\ee

Moreover, for a subset $Y\subset  X$ and  $t>t_*$, we call $(Y,t) \equiv \{(y, t) : y\in Y\} $ a \emph{formal neighborhood},
and define $\cal{P}= \cal{P}( X) \times (t_*, \infty) $ to be the set of formal neighborhoods.
Define the \emph{$ \psi$-neighborhood} of $(Y,t) \in \cal{P}$ by
\be
\nonumber
	\psi(Y,t) \equiv \bigcup_{y \in Y}\psi (y,t). 
\ee
Note that by \eqref{Mono}, $\psi(Y,t + s ) \subset \psi(Y,t)$ for all $s \geq 0$.

\subsection{The $\psi$-modified Schmidt game} 
\label{SchmidtGame}
We recall the $(\psi,a_*)$-modified Schmidt game due to \cite{KleinbockWeiss}, where $a_*\geq 0$.
Two players, $A$ and $B$, pick numbers $a$ and $b$ both bigger than $a_*$.
Player $B$ starts with his first move by choosing a formal ball $\omega_1 = (x_1, t) \in \Omega$.
Given a choice $\omega_k=(x_k, t_k)$ of $B$, due to \eqref{Mono},
player $A$ can (and must) choose a formal ball $\bar \omega_k = (\bar x_k, t_k+a) \in \Omega$ 
such that $\psi(\bar \omega_k)\subset \psi(\omega_k)$.
Also player $B$ continues by choosing a formal ball $ \omega_{k+1} = (x_{k+1}, t_k+a +b) \in \Omega$ 
such that $\psi(\omega_{k+1}) \subset \psi( \bar \omega_k)$.
The game continues in this manner and we obtain  a nested sequence of compact sets
\be
\nonumber
	B_1 \equiv \psi(\omega_1) \supset A_1 \equiv \psi (\bar \omega_1) \supset B_2 \equiv \psi (\omega_2) \supset \dots 
	\supset B_k \equiv \psi( \omega_k) \supset A_k \equiv \psi(\bar \omega_k) \supset \dots, 
\ee
where $\omega_k= (x_k, t_k)$ and $\bar \omega_k= (\bar x_k, \bar t_k)$ satisfy
\be
\nonumber
	t_k= t_1+(k-1)(a+b), \text{ and }\ \ \ \bar t_k= t_1 + (k-1)(a+b)+a.
\ee
The intersection of compact nested sets, given by
\be
\nonumber
	\bigcap_{k=1}^{\infty} B_k = \bigcap_{k=1}^{\infty} A_k,
\ee 
is nonempty and compact.
A given subset $S\subset X$ is called \emph{$(\psi, a_*, a,b)$-winning}, 
if player $A$ can find a \emph{strategy} which guarantees that $\cap_{k\geq1} B_k$ intersects $S$,  
no matter what $B$'s choices are.
The set $S$ is called \emph{$(\psi, a_*, a)$-winning} if $S$ is \emph{$(\psi, a_*, a,b)$-winning} for every $b>a_*$.
$S$ is \emph{$(\psi, a_*)$-winning} if it is \emph{$(\psi, a_*, a)$-winning} for some $a>a_*$
and \emph{$\psi$-winning} if it is $(\psi, a_*)$-winning for some $a_*\geq 0$.

\begin{remark}
Note that if $S$ is a $B_{\sigma}$-winning set, then it is also a $B_1$-winning set.
In fact, if $S$ is $(B_{\sigma}, a_*, a, b)$-winning, then it is $(B_{1}, \sigma a_*, \sigma a, \sigma b)$-winning.
\end{remark}

With respect to the standard monotonic function $\psi=B_1$,
the game described above coincides with the original $(\alpha, \beta)$-Schmidt game
for the choice
\be
\nonumber
	a= - \log(\alpha), \ \ \ b= - \log(\beta), \ \ \ a_*=0, \ \ \ t_*= - \infty.
\ee
If moreover $X=\R^n$ is the Euclidean space, 
then a winning set $S$ enjoys the following properties (see \cite{BroderickEtAl,Dani2,Schmidt}).
\begin{itemize}
    \item[1.] A winning set is dense and thick; a subset $Y$ of a metric space $X$ is \emph{thick} if for any nonempty open set $U \subset X$, 
    							$Y\cap U$ has full Hausdorff-dimension,
    \item[2.] a countable intersection of  $\alpha$-winning sets is $\alpha$-winning,
    \item[3.] winning sets are preserved by bi-Lipschitz homeomorphisms, and,
    \item[4.]  winning sets are \emph{incompressible}; 
    			that is, given a nonempty open set $U \subset \R^n$ and a countable sequence of uniformly%
			\footnote{ That is, the Lipschitz constants $L_i$ of $F_i$ are bounded.}
			bi-Lipschitz maps $F_i : U \to \R^n$, 
			then $\cap_{i=1}^\infty F_i^{-1}(S)$ has Hausdorff-dimension $n$.
\end{itemize}

Unfortunately, these properties are not satisfied in general; 
in fact, see \cite{KleinbockWeiss}, Proposition 5.2, for a $\psi$-winning set which is of Hausdorff-dimension zero in a space of positive dimension.
However, the following (and further) properties for the $\psi$-modified Schmidt game can be found in \cite{KleinbockWeiss}%
\footnote{ Note that \cite{KleinbockWeiss} uses a slightly different setting. Nevertheless, the properties hold true with the same arguments. }
.
\begin{itemize}
    \item[1.] 	Let $S_i \subset X$, $i \in \N$, be a sequence of $(\psi, a_*, a)$-winning sets. 
			Then, $\cap_{i\geq 1} S_i$ is also  $(\psi, a_*, a)$-winning. 
    \item[2.]		Let $\Omega_i= X_i \times (t_*,\infty)$, and $ \psi_i$ be given for $i=1,2$.
			Suppose that $S_i \subset X_i$ is a $(\psi_i, a_*)$-winning set for $i=1,2$.
			Then $S_1 \times S_2$ is a $(\psi_1 \times \psi_2, a_*)$-winning set in $X_1 \times X_2$ with the product metric,
			where $\psi_1 \times \psi_2( x_1, x_2, t) \equiv \psi_1(x_1, t) \times \psi_2(x_2, t)$.
\end{itemize}

\noindent Moroever, let $\mu$ be a locally finite Borel measure on $X$.
Denote by $O(x,r) \equiv \{y \in X: d(x,y)<r\}$ the open metric ball around $x$.
The \emph{lower pointwise dimension} of $\mu$ at $x\in $ supp$(\mu)$ is defined by%
\be
\nonumber
	d_{\mu}(x) \equiv \liminf_{r \to 0} \frac{\log( \mu(O(x,r) )}{\log r}.
\ee
For every open $U \subset X$ with $\mu(U)>0$,
\be
\nonumber
	d_{\mu}(U) \equiv \inf_{x \in U \cap \text{ supp}(\mu)} d_{\mu}(x),
\ee
which is known to be a lower bound for the Hausdorff-dimension of supp$(\mu) \cap U$ (see \cite{Falconer}, Proposition 4.9 (a)).
The measure $\mu$ is called \emph{Federer} if there are $K>0$ and $R>0$ such that for all $x\in $ supp$(\mu)$ and $0<r<R$,
\be
\nonumber
	\mu(O(x,3r)) \leq K \mu(O(x,r)).
\ee
In the case that we consider the standard function $\psi_1$, i.e., we focus on the classical Schmidt-game,
the following lower estimate on the Hausdorff-dimension is given.

\begin{proposition}[\cite{KleinbockWeiss}, Proposition 5.1]
\label{LowerDimension}
If $S$ is a winning set (in the sense of Schmidt) in a complete metric space $X$ which supports a Federer measure $\mu$ with $X=$ supp$(\mu)$,
then for every nonempty open set $U\subset X$, we have dim$(S\cap U) \geq d_{\mu}(U)$, where 'dim' stands for the Hausdorff-dimension.
\end{proposition}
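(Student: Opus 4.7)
The plan is to fix an arbitrary $s < d_\mu(U)$ and construct, inside $S \cap U$, a compact Cantor-type set $K$ of Hausdorff dimension at least $s$; letting $s \nearrow d_\mu(U)$ then gives the claim. Pick $\alpha > 0$ such that $S$ is $\alpha$-winning in the sense of Schmidt, fix a corresponding winning strategy $F$ for player $A$, and note that one is free to choose $\beta > 0$ as small as desired later (in terms of $\alpha$, the Federer constant, and $s$), since by hypothesis $F$ beats \emph{every} $\beta$-response of $B$. Pick a closed ball $B_0 \subset U$ centered at a point of $\supp(\mu)$, of radius small enough that both the Federer doubling property and the pointwise-dimension estimate $\mu(O(y,r)) \leq r^{s}$ hold at every $y \in B_0 \cap \supp(\mu)$ on the relevant range of scales.

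The first step is to build a \emph{tree of plays} for player $B$. Starting from the root move $B_0$, at a node labelled by a $B$-ball $B$ of radius $r$ the strategy $F$ prescribes an $A$-ball $\bar B \subset B$ of radius $\alpha r$. Inside $\bar B$, I would select a maximal collection of balls $\{B(x_i, \beta\alpha r)\}_i$, centered at points $x_i \in \bar B \cap \supp(\mu)$, that are legal next $B$-moves (i.e.\ contained in $\bar B$) and sufficiently separated that the concentric balls $B(x_i, c\beta\alpha r)$ are pairwise disjoint for a fixed constant $c$. Each such $B(x_i, \beta\alpha r)$ becomes a child node of $B$. Every infinite branch of the tree is a legal play against $A$'s strategy, so, since $F$ is winning, the associated nested intersection meets $S$; the set of these limit points is a compact subset $K \subset S \cap B_0 \subset S \cap U$.

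The second and main step is a mass-distribution argument. I would define a probability measure $\nu$ on $K$ by distributing each node's mass among its children in proportion to the $\mu$-masses of their balls; the Federer property ensures that a parent's $\mu$-mass is comparable (up to a constant depending only on $K$, $\alpha$, $\beta$) to the sum of its children's $\mu$-masses, so the recursion is non-degenerate and $\nu$ is well defined. The target estimate is
\[
    \nu(B(x,\rho)) \leq c\,\rho^{s}, \qquad x \in K,\ \rho > 0 \text{ small},
\]
for some constant $c$. To prove it, I would locate the generation $k$ of the tree at which balls have radius comparable to $\rho$, bound the number of generation-$k$ descendants of any given node that can meet $B(x,\rho)$ via the disjointness of their ancestors together with Federer doubling, and then invoke the upper bound $\mu(B(y,\rho)) \leq \rho^{s}$ to close the estimate. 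The mass distribution principle (\cite{Falconer}, Proposition 4.9(a)) then yields $\dim K \geq s$, whence the claim.

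The principal obstacle will be the bookkeeping in the last step: one must coordinate the contraction factor $\alpha\beta$ of the ball radii, the Federer doubling exponent, and the packing of children per node so that the exponent in the mass-distribution estimate can be pushed arbitrarily close to $d_\mu(U)$. The crucial flexibility comes from $S$ being $\alpha$-winning against \emph{every} $\beta > 0$, which allows $\beta$ to be tuned small enough to absorb the Federer constant and the constants coming from the geometry of maximal packings in $\bar B$.
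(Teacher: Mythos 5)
Your proposal is correct and is essentially the strongly-treelike-family / mass-distribution argument that Kleinbock and Weiss use to prove this result; the paper cites it rather than reproving it, and the paper's own proof of the weak-game analogue (Proposition \ref{LowerDimension2}) appeals to the same construction. All the key mechanisms you describe --- a tree of $B$-plays against a fixed $\alpha$-winning strategy, the measure $\nu$ distributed among children in proportion to $\mu$-mass, Federer doubling giving a $\beta$-independent mass-ratio constant at each generation, and the resulting dimension loss of order $1/\log(1/(\alpha\beta))$ vanishing as $\beta \to 0$ --- match the cited argument.
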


If $\mu$ satisfies a \emph{power law}, that is, there exist $\delta$, $c_1$, $c_2$ and $R>0$ such that for every $0<r< R$ and $x\in $ supp$(\mu)$
we have
\be
\nonumber
	c_1 r^{\delta} \leq \mu(O(x,r)) \leq c_2 r^{\delta},
\ee
then $\mu$ is Federer and we have $d_{\mu}(x)= \delta$.

\subsection{The weak $\psi$-modified Schmidt game}
\label{ModifiedGame}
For $b_*>0$ consider the following modification of  rules for the players $A$ and $B$.
Fix a parameter $b\geq   b_*$.
Player $B$ starts again with a formal ball $\omega_1=(x_1, t_1) \in \Omega$.
Then, given a formal ball $\omega_k=(x_k, t_k)  \in \Omega$ of player $B$,
player $A$ must choose a nonempty set $\cal{L}_{b}^{\psi}(\omega_k) \subset \Omega$ of \emph{legal moves},
\be 
\label{LegalPlay2}
	\cal{L}_{b}(\omega_k)\equiv \{ \omega = (x, t_k + \bar b) : b_* \leq \bar b \leq m_k b, \     \psi(\omega) \subset \psi(\omega_k), \ \cal{C}(\omega_k)  \},
\ee
where $ \cal{C}(\omega_k)$ denotes possible further conditions which $A$ requires and $m_k \in \N$ is an integer which $A$ chooses at each step.
$B$ then chooses a formal ball $\omega_{k+1}\in  \cal{L}_{b}^{\psi} (\omega_k)$ and the game continues in this manner.  
Since $\psi(\omega_k) \supset \psi(\omega_{k+1})$, we obtain a nested sequence 
\be
 \label{ChoiceA}
\nonumber
	B_1  \supset B_2  \supset \dots \supset B_k \supset \dots,
\ee
where $B_k= \psi(x_k, t_k)$ satisfies condition $\cal{C}(\omega_k)$. 
If the nonempty compact set $\cap_{k\geq 1} B_k$ intersects a given set $S \subset X$, then $A$ \emph{wins} this game.
The set $S$ is called \emph{weakly $(\psi, b_*,b)$-winning} if player $A$ finds a strategy such that $A$ wins for every possible game, given the parameter $b$.
$S$ is called \emph{weakly $(\psi, b_*)$-winning} if it is  \emph{weakly $(\psi, b_*,b)$-winning} for every $b\geq b_*$
and \emph{weakly $\psi$-winning} if it is weakly $(\psi,b_*)$-winning for some $b_*> 0$.

\begin{remark}
Note that to leave $\cal{L}_{b}(\omega_k)$ nonempty is always possible by \eqref{Mono}. 
Moreover, the conditions that $\psi(\omega_{k+1}) \subset \psi(\omega_k)$ and  $\bar b \leq m_k b$ seemed to be the least suitable conditions to already assume for player $A$ (and for our purpose) 
but can of course be weakened as well.
The requirement that $b_*>0$ implies that $t_k \to \infty$ which can be avoided if we say that $A$ wins when $t_k \not \to \infty$.%
\footnote{ This alternative rule was chosen, for instance, by \cite{FishmanSimonsUrbanski}. Note that, if $S$ is dense in $X$ and $\psi = B_1$, then $B$ looses as soon as $t_k \not \to \infty$. }
\end{remark}

The difference to the original $\psi$-modified Schmidt game is that, rather than forcing $B$ in a certain direction,
 $A$ can precisely determine $B$'s choices in the next move. 
Since $A$ might leave $B$ only one choice in each step, the weak $\psi$-modified Schmidt game loses in some sense the character of a game.
Moreover, the conditions $C(\omega_k)$ determine the 'control' player $A$ chooses and the more conditions $A$ requires, the less properties $S$ might enjoy.
Therefore, player $A$ also has an interest in leaving $B$ as much choices and freedom as possible, with respect to a winning strategy.
In particular, we are interested in conditions on strategies for player $A$ such that a weakly $\psi$-winning set $S$ 
satisfies similar or even the same properties than winning sets for Schmidt's, McMullen's or the $\psi$-game.

We want to  point out the following special cases of modifications of Schmidt's game,
where, given a choice $\omega_k = (x_k, t_k) \in \Omega$ of $B$, $A$ chooses a set $A_k\subset X$ 
and requires for the condition $\cal{C}(\omega_k)$ that
\be
\label{ConditionInducedBySets}
	\psi(\omega)  \subset \psi(\omega_k) - A_k \ \  \text{ and } \  \ m_k=m_*=1.
\ee

First, let $\cal{S} = \{ S \subset X\}$ be a given collection of subsets of $\bar X$.
Assume then that for each of the sets $A_k$ is either empty or a $\psi$-neighborhood
\be
\label{HAW}
	A_k= \psi(S_k , t_k +a_k), \ \ S_k \in \cal{S}, \ \ a_k\geq b,
\ee
and call a winning set under these requirements \emph{absolute $\psi$-winning with respect to $\cal{S}$};
compare with   \cite{FishmanSimonsUrbanski} for the case that $\psi=B_1$ is the standard function.

Consider the standard case that 
\be
\nonumber
	X=\R^n, \ \ \  \psi=B_1, \ \ \ b_*= \log(3), \ \ \  t_*= - \infty .
\ee
Clearly, if $\cal{S}$ is the set of points in $\R^n$, this modification corresponds to the one of McMullen \cite{McMullen}, 
called \emph{absolute winning game} and a winning set is called \emph{absolute winning}.
Note that an absolute winning set in $\R^n$ is in particular a Schmidt winning set and in fact satisfies stronger properties (see \cite{McMullen}).

In the case that $\cal{S}$ denotes the set of affine hyperplanes in $\R^n$ (or in a vector space),
then this modification corresponds to the one of Broderick et al. \cite{BroderickEtAl}, 
called \emph{hyperplane absolute winning game}  and a winning set is called \emph{hyperplane absolute winning} (short HAW set). 
Again, note that a HAW-set in $\R^n$ is in particular a Schmidt winning set and in fact satisfies stronger properties (see \cite{BroderickEtAl}).

Second, let $b_* > a \geq a_* \geq 0$.
Assume the sets $A_k $ to be the complements of $\psi$-balls 
\be
\nonumber
\label{Schmidt}
	A_k=\psi(y_k, t_k + a)^C,  \ \ (y_k, t_k + a) \in \Omega \ \text{ with }\  \psi(y_k, t_k + a) \subset B_k = \psi(x_k, t_k),
\ee
If $\cal{C}(\omega_k)$ moreover requires that $\bar b= b$ in \eqref{LegalPlay2},
this modification corresponds to the $(\psi,a_*, a, b-a)$-game and in particular to Schmidt's game for $X=\R^n$ and $\psi=B_1$.

Now  in general, if $\cal{C}(\omega_k)$ requires 
 for all sets $A_k\subset X$ which $A$ chooses in \eqref{ConditionInducedBySets} that
there exists a formal ball $\bar \omega = (\bar x, t_k+ b_*)\in \Omega$ such that 
\be 
\label{LegalPlay3}
	\psi(\bar \omega) \subset \psi(\omega_k) - A_k,
\ee
then a weakly $\psi$-winning set is $\psi$-winning.

\begin{lemma}
\label{AbsWinWin}
If \eqref{LegalPlay3} is satisfied, then a weakly $(\psi, b_*)$-winning set $S$ is  $(\psi, a_*)$-winning for all $a_*\geq b_*$.
\end{lemma}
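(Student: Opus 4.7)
The plan is to translate a winning strategy $\tau$ for the weak $(\psi, b_*, b_{\text{weak}})$-game into a winning strategy for the $(\psi, a_*, a, b)$-game, exploiting \eqref{LegalPlay3} to realize each weak-game restriction by $A_k$ as an honest first-move ball of A.

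Fix $a_* \geq b_*$ and choose any $a > a_*$, so in particular $a > b_*$. Given a parameter $b > a_*$ for the $\psi$-modified game, I will have A employ her weak-game winning strategy $\tau$ for the parameter $b_{\text{weak}} \equiv a + b \geq b_*$. The two games will run in parallel with identical choices of $\omega_k$ by B. After B plays $\omega_k = (x_k, t_k)$ in the $\psi$-modified game, A consults $\tau$ (with the shared history) to obtain the set $A_k$ inducing the condition $\cal{C}(\omega_k)$ through \eqref{ConditionInducedBySets}. By hypothesis \eqref{LegalPlay3}, there exists $\bar x_k$ with $\psi(\bar x_k, t_k + b_*) \subset \psi(\omega_k) - A_k$, and A plays $\bar\omega_k \equiv (\bar x_k, t_k + a)$; monotonicity \eqref{Mono} together with $a \geq b_*$ yields $\psi(\bar\omega_k) \subset \psi(\bar x_k, t_k + b_*) \subset \psi(\omega_k)$, as required for a legal $\psi$-modified move.

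The key verification is that B's ensuing move $\omega_{k+1} = (x_{k+1}, t_k + a + b)$, constrained by $\psi(\omega_{k+1}) \subset \psi(\bar\omega_k)$, qualifies as a legal weak-game move $\omega_{k+1} \in \cal{L}_{b_{\text{weak}}}(\omega_k)$. Indeed, $\psi(\omega_{k+1}) \subset \psi(\bar\omega_k) \subset \psi(\omega_k) - A_k$ realizes $\cal{C}(\omega_k)$, and, choosing $m_k = 1$, the time increment $\bar b \equiv a + b = b_{\text{weak}}$ lies in $[b_*, m_k b_{\text{weak}}]$ as required by \eqref{LegalPlay2}. By induction, the sequence produced in the $\psi$-modified game is a legal play of the weak game, so the winning property of $\tau$ gives $\bigcap_k \psi(\omega_k) \cap S \neq \emptyset$. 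This shows $S$ is $(\psi, a_*, a, b)$-winning for every $b > a_*$, and hence $(\psi, a_*)$-winning.

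I anticipate no substantive obstacle beyond parameter bookkeeping: the chain $a > a_* \geq b_*$ is precisely what is needed so that the $b_*$-step ball guaranteed by \eqref{LegalPlay3} sits inside the $a$-step ball that A is obliged to play, and the choice $b_{\text{weak}} = a + b$ (permissible since $S$ is weakly winning for every $b_{\text{weak}} \geq b_*$) makes one round in each game correspond to one round in the other with $m_k = 1$. Alternative choices, such as $b_{\text{weak}} = b_*$ with $m_k$ chosen large enough to absorb the jump $a + b$, lead to the same conclusion.
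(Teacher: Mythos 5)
Your proof is correct and follows essentially the same approach as the paper: both run the weak game with parameter $a+b$ in parallel with the $(\psi,a_*,a,b)$-game, use \eqref{LegalPlay3} to produce a legal $a$-step move for $A$ inside the $b_*$-step ball guaranteed by \eqref{LegalPlay3}, and observe that $B$'s replies are legal in both games, with $m_k=1$. The bookkeeping is the same; the paper even contains the harmless typo $t_k+k(a+b)$ where $t_k+(a+b)$ is meant, which your version writes correctly.
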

 
\begin{proof}  
Given $a \geq a_* \geq b_*$, $b>0$, set $\tilde b= a+b \geq b_*$.
Let player $A$ play the $(\psi,a_*,a,b)$-modified Schmidt game 
and consider a further player $\bar A$ who plays the weak $(\psi, b_*, \tilde b)$-modified Schmidt game.
Suppose that player $B$ has chosen his $k$-th move $\omega_k=(x_k, t_k) \in \Omega$. 
By \eqref{LegalPlay3},
$\bar A$ chooses a set $A_k \subset X$ such that 
there exists a formal ball $\bar \omega = (\bar x, t_k+ b_*) \in \Omega$ with
\be 
\nonumber
	\psi(\bar \omega) \subset \psi(\omega_k) -  A_k. 
\ee
By \eqref{Mono} and since $a\geq b_*$, 
there exists a formal ball $\bar \omega_{k+1} = (\bar x_{k+1}, t_k+a) \in \Omega$ such that $\psi(\bar \omega_{k+1}) \subset \psi( \bar \omega)$
which we take as $A$'s choice.
Note that any move $\omega_{k+1}=(x_{k+1}, t_k+k(a+b)) = (x_{k+1}, t_k+ \tilde b) \in \Omega$ such that 
$\psi(\omega_{k+1}) \subset \psi(\bar \omega_{k+1})$ 
of $B$ is a legal move for both games.
Since $\bar A$ has a weak winning strategy, we see that 
\be
\nonumber
	\bigcap_{k\geq 1} \psi (\bar \omega_k) = \bigcap_{k\geq 1} \psi ( \omega_k)  
\ee
intersects $S$. Hence, $A$ wins and $S$ is also a $(\psi, a_*, a,b)$-winning set.
\end{proof}

Hence, in view of the properties of $\psi$-winning sets (see Subsection \ref{SchmidtGame}), 
we will consider conditions which ensure that \eqref{LegalPlay3} is satisfied
so that that the weak $\psi$-modified Schmidt game is at least as strong as the  $\psi$-modified Schmidt game.
However, some of the properties of $\psi$-winning sets can still be true in the weaker setting.

In fact, let $S$ be a $(\psi, b_*, b)$-weakly-winning set.
In order to estimate the lower bound for the Hausdorff-dimension of $S$,
we consider the conditions given by \cite{KleinbockWeiss} and only need to modify ($\mu2$) below:
\begin{itemize}
\item[(MSG1)]	For any open set $\emptyset \neq U \subset X$ there is $\omega \in \Omega$ such that $\psi(\omega) \subset U$.
\item[(MSG2)]	There exist $C$, $\sigma>0$ such that diam$(\psi(x,t)) \leq Ce^{-\sigma t }$ for all $(x,t )\in \Omega$.
\end{itemize}
Note that if (MSG1) is satisfied, a weakly $\psi$-winning set is dense.
Let moreover $\mu$ be a locally finite Borel measure on $X$ such that:
\begin{itemize}
\item[($\mu1$)] For every formal ball $\omega \in \Omega$ we have $\mu(\psi(\omega))>0$.
\item[($\mu2$)] 
There exist constants $c=c( b)>0$ and $m_* = m_*(b)\in\N$ with the following property:
If $\omega_k \in \Omega$ 
is a choice of $B$ in the $(\psi, b_*,b)$-game, 
there exist legal moves $\omega^1_{k+1}, \dots ,  \omega^n_{k+1} \in \cal{L}_{b}(\omega_k)$, $\omega^i_{k+1}=(x^i_{k+1}, t_k + m_kb)$, $m_k=m_*$,
with respect to the $(\psi, b_*, b)$-strategy of $A$, 
which satisfy $\mu(\psi(\omega^i_{k+1})\cap \psi ( \omega^j_{k+1})) = 0 $ when $i\neq j$
as well as
\be
\label{mu2} 
	\mu \big( \bigcup_{i=1\dots n} \psi( \omega^i_{k+1}) \big) \geq c \cdot \mu(\psi(\omega_k)).
\ee
\end{itemize}
Note that from (MSG1) and ($\mu$1), $\mu$ must have full support, i.e. supp$(\mu) = X$.

\begin{proposition} 
\label{LowerDimension2}
Suppose that $X$, $\Omega$, $\psi$ and the measure $\mu$ satisfy (MSG1-2) and ($\mu$1-2)
with respect to a weakly $(\psi, b_*,b)$-winning set $S$.
Then for every nonempty  open set $U\subset X$ we have that 
\be
\nonumber
	\text{dim}(S\cap U) \geq d_{\mu}(U) + \frac{\log(c)}{\sigma m_* b},
\ee
where $\sigma$, $c=c( b)$ and  $m_*$ are the constants of (MSG2) and ($\mu2$). 
\end{proposition}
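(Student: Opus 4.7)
The plan is to mimic the Cantor-tree construction used in the proof of Proposition \ref{LowerDimension}, exploiting $(\mu 2)$ as a branching device: at every stage of $A$'s strategy it furnishes finitely many $\mu$-disjoint legal moves whose $\psi$-images jointly capture at least a fraction $c$ of the parent cylinder's $\mu$-mass. Iterating along every branch produces a Cantor-type set $K \subset S \cap U$ on which we transfer a scaled copy of $\mu$ and then read off the dimension via the mass distribution principle.

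\emph{Tree of plays.} By (MSG1) and $(\mu 1)$ pick $\omega_1 = (x_1, t_1) \in \Omega$ with $\psi(\omega_1) \subset U$ and $\mu(\psi(\omega_1)) > 0$, and let this be $B$'s opening. Fix a weakly $(\psi, b_*, b)$-winning strategy $\mathfrak S$ for $A$. Recursively, at every node $\omega^{\vec i}_k$ at level $k$ (indexed by a finite multi-index $\vec i$) produced by following $\mathfrak S$ along the path up to this node, apply $(\mu 2)$ to produce a finite family $\omega^{\vec i,1}_{k+1}, \ldots, \omega^{\vec i, n_{\vec i}}_{k+1}$ of legal moves of the form $(\cdot, t_k + m_* b)$ with pairwise $\mu$-disjoint $\psi$-images of total mass at least $c\, \mu(\psi(\omega^{\vec i}_k))$; since all of them are legal, we let $B$ pick any one of them and continue the strategy in each branch. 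The union over the nodes at level $k$ defines a nested sequence of compact sets $T_k$ whose limit $K := \bigcap_k T_k$ sits inside $\psi(\omega_1) \subset U$. By (MSG2) the cylinders at level $k$ have diameters bounded by $C e^{-\sigma(t_1 + k m_* b)}$, so every infinite branch collapses to a single point, and by the winning property of $\mathfrak S$ this point lies in $S$; hence $K \subset S \cap U$.

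\emph{Measure and dimension.} Define a Borel probability measure $\nu$ on $K$ inductively by $\nu(\psi(\omega_1)):=1$ and
\[
\nu(\psi(\omega^{\vec i,j}_{k+1})) := \nu(\psi(\omega^{\vec i}_k))\cdot \frac{\mu(\psi(\omega^{\vec i,j}_{k+1}))}{\sum_{l=1}^{n_{\vec i}} \mu(\psi(\omega^{\vec i,l}_{k+1}))};
\]
the denominators are at least $c\,\mu(\psi(\omega^{\vec i}_k))$, so telescoping the defining ratios yields
\[
\nu(\psi(\omega^{\vec i}_k)) \leq c^{-k}\, \frac{\mu(\psi(\omega^{\vec i}_k))}{\mu(\psi(\omega_1))}.
\]
Fix $x \in K \subset \mathrm{supp}(\mu)$. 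For every $\epsilon > 0$ there is $r_0(x) > 0$ with $\mu(O(x,r)) \leq r^{d_\mu(U) - \epsilon}$ whenever $r < r_0(x)$. For such an $r$, choose $k$ with $C e^{-\sigma(t_1+(k+1)m_* b)} < r \leq C e^{-\sigma(t_1 + k m_* b)}$, so that by (MSG2) every cylinder at level $k$ meeting $B(x, r)$ is contained in $O(x, C'r)$ for some constant $C' = C'(\sigma, m_*, b)$. Combining the telescoped mass bound with the $\mu$-disjointness of cylinders gives
\[
\nu(B(x, r)) \leq \frac{c^{-k}}{\mu(\psi(\omega_1))}\, \mu(O(x, C' r)) \leq C''\, r^{d_\mu(U) + \log(c)/(\sigma m_* b) - \epsilon},
\]
where the bound $c^{-k} \lesssim r^{\log c/(\sigma m_* b)}$ follows from the choice of $k$. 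Localizing to the measurable subset $\{x \in K : r_0(x) \geq 1/m\}$ of positive $\nu$-mass for large $m$, the mass distribution principle yields $\dim(S \cap U) \geq \dim(K) \geq d_\mu(U) + \log(c)/(\sigma m_* b) - \epsilon$, and letting $\epsilon \downarrow 0$ concludes the proof.

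\emph{Main obstacle.} The two delicate points are, first, that $A$'s strategy must be simulated coherently along \emph{every} branch of the tree (rather than along a single play), so the specific splitting moves provided by $(\mu 2)$ are consistent with $\mathfrak S$ at every node; and second, that the lower pointwise dimension only delivers the uniform estimate $\mu(O(x,r)) \leq r^{d_\mu(U)-\epsilon}$ below a threshold $r_0(x)$ that a priori depends on $x$, forcing the standard localization to $\{r_0(x) \geq 1/m\}$ before the mass distribution principle can be invoked.
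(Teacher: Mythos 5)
Your proposal is correct and follows the same approach the paper sketches: a Cantor-tree (Kleinbock--Weiss ``strongly treelike'') construction rooted at a formal ball inside $U$, branching via the $\mu$-disjoint legal moves furnished by $(\mu2)$, with a self-similar probability measure supported on the limit set and the mass distribution principle to read off the lower bound. You simply unpack what the paper delegates to \cite{KleinbockWeiss}, Theorem 2.7. Two very minor points: the telescoped estimate should read $\nu(\psi(\omega_k^{\vec i}))\le c^{-(k-1)}\mu(\psi(\omega_k^{\vec i}))/\mu(\psi(\omega_1))$, which does not affect the exponent; and when you bound $\sum \mu(\text{cylinder})\le\mu(O(x,C'r))$ you use pairwise $\mu$-disjointness of \emph{all} level-$k$ cylinders, not just siblings --- this follows by induction from nestedness plus sibling-disjointness, but is worth a sentence.
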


\begin{proof}
Similarly to the proof of \cite{KleinbockWeiss}, Theorem 2.7, 
one constructs a strongly treelike countable family of compact subsets of $X$ whose limit set $A_{\infty} \cap U$ is a subset of $S \cap U$.
We start with a formal ball $\omega_1 \in \Omega$ such that $\psi(\omega_1) \subset U$.
The difference is that, instead of using the choices of $A$, we use the choices of $B$ given in ($\mu2$) in
order to obtain that 
\be
\nonumber
	\text{dim}(A_{\infty} \cap U)
	\geq d_{\mu}(U) + \frac{\log(c)}{\sigma m_* b}.
\ee
The proof follows.
\end{proof}

\subsection{The framework, conditions on the resonant sets and  strategies}
\label{Strategy}
Let $\bar X$ be a proper metric space and $X$ a closed subset of $\bar X$ which is, with the induced metric, a complete metric space.
In many applications, we are interested in playing the $\psi$-game on $X$ 
but do not require the resonant sets to be contained in $X$ but in $\bar X$.
Therefore, let $\bar \Omega = \bar X \times (t_*, \infty)$ and $\Omega= X \times (t_*, \infty) \subset \bar \Omega$.
Let $\bar \psi : \bar \Omega \to \cal{C}(\bar X)$ be a monotonic function  on $\bar \Omega$,
which induces a monotonic function $\psi$ on $\Omega$,  defined by
\be
\nonumber
	\psi(\omega) \equiv \bar \psi(\omega) \cap X, \ \ \ \omega \in \Omega.
\ee

Now, let $\Lambda$ be a countable index set  and  $\{R_{\lambda} \subset \bar X : \lambda \in \Lambda\}$ be a family of \emph{resonant sets} in $\bar X$,
where we assign a \emph{size} $s_{\lambda}\geq s_*$ to every $R_{\lambda}$  with $t_*<s_*\in \R$.
We consider the contractions of the $(\bar \psi, s_{\lambda})$-neighborhoods of $R_{\lambda}$, that is
\be
\nonumber
\label{Contraction}
	\psi_{\lambda}(c) \equiv \bar \psi(R_{\lambda}, s_{\lambda} + c) \subset \bar \psi(R_{\lambda}, s_{\lambda} ), \ \ \ c\geq 0.
\ee
Denote this family by
\be	
\nonumber
\label{Family}
	\cal{F}= ( \Lambda, R_{\lambda}, s_{\lambda}).
\ee
Assume that the family $\cal{F}$  satisfies the following conditions.
\begin{itemize}
\item[(N)]
The resonant sets $\{R_{\lambda}\}$ are \emph{nested} with respect to their sizes, 
that is, for $\lambda, \beta \in \Lambda$ we have
\be
\nonumber
\label{Stable}
	s_{\lambda} \leq s_{\beta} \implies R_{\lambda} \subset R_{\beta}.
\ee
\item[(D)]
The sizes $\{s_{\lambda}\}$ are \emph{discrete}, 
that is, for all $t> t_*$ we have 
\be
\nonumber
\label{Discrete}
	 \lvert \{ \lambda \in \Lambda: s_{\lambda} \leq t\} \rvert < \infty.
\ee
\end{itemize}
We then define the set of \emph{badly approximable points} with respect to $\cal{F}$ by 
\be
\nonumber
	\textbf{Bad}_X^{\bar \psi}(\cal{F})= \{x\in X : \exists \ c =c(x)< \infty \text{ such that } x  \not \in \bigcup_{\lambda \in \Lambda} \psi_{\lambda}(c)  \},
\ee
or simply by $\textbf{Bad}(\cal{F})$ if there is no confusion about the parameter spaces under consideration.

Using (N) and (D), we define a 'one-parameter' family of resonant sets and sizes as follows.
For a parameter $t\geq s_1$, let $\lambda_t \in \Lambda$ such that $s_t \equiv s_{\lambda_t}$, called \emph{relevant size}, is the maximal size with $s_{\lambda} \leq t$.
We define the \emph{relevant resonant set} with respect to the parameter $t$ by
\be
\nonumber
	R(t) \equiv \bigcup_{ s_{\lambda} \leq t} R_{\lambda} = R_{\lambda_t}.
\ee
Moreover, for $t\geq s_1$ and $b> 0$, we let 
\be
\label{RB}
	R(t,b) \equiv 	R( t) - R(t-b) 
\ee 
be the set of resonant points for which the 'minimal size' belongs to the spectrum $(r-b, r]$.


For $b_*>0$, $n_*\in \N$ and $L_*\geq 0$, we consider two conditions, a strong and a weak one, on the space $X$ 
and a nested and discrete family $\cal{F}$.
\begin{itemize}
\item[($b_*$)]
$(\Omega, \psi)$ is \emph{strongly $b_*$-diffuse with respect to the family $\cal{F}$},
if there exists  $n \in \N$ such that 
for all formal balls $\omega=(x, t) \in \Omega$
there exists a formal ball $\omega' = (x', t+b_*) \in \Omega$ such that
\be
\label{bDiffuse}
	\psi (\omega') \subset \psi(\omega) - \bar \psi (R(t), t+ nb_*).
\ee
\item[($b_*, n_*, L_*$)]
$(\Omega, \psi)$ is \emph{$(b_*,n_*, L_*)$-diffuse with respect to the family $\cal{F}$},
if for all $b> b_*$ there exists a $n=n(b) \in \N$ such that,
for all formal balls $\omega=(x, t)\in \Omega$ ,
there exists a formal ball $\omega' = (x', t+b) \in \Omega$ 
such that
\be
\label{BarBDiffuse}
	\psi (\omega') \subset \psi(\omega) - \bar \psi (R(t, n_*(b+L_*)), t+ nb).
\ee
\end{itemize}

Condition $(b_*)$ is too strong in general (see Subsection \ref{ToralEndo} and \ref{CAT(-1)}, Case 3.)
but implies  $(b_*,n_*, L_*)$ for all $n_*\in \N$, $L_* \geq 0$ and is sufficient to guarantee that if \textbf{Bad}$(\cal{F})$ is weakly $(\psi,b_*)$-winning
it is also $(\psi,b_*)$-winning by Lemma \ref{AbsWinWin}.
\\
In fact, under these conditions we can define the following strategies for the set 
\be
\nonumber
	S=\textbf{Bad}(\cal{F}).
\ee
Fix a parameter $b>b_*$
and assume $B$ chose the formal ball $\omega_1=(x_1, t_1) \in \Omega$.

\emph{ The strategy for player $A$ under the condition $(b_*, 1, 0)$.}
Let  $m_*\in \N$ be the minimal integer such that $ m_*b \geq t_1 -s_1 $ and let $l_*=n(m_* b)$ be as in \eqref{BarBDiffuse}.
Given the times $t_k$, define the relevant resonant sets $
	R_k \equiv R(t_{k} , m_* b)$. 
For $k  \geq 1$, assume that $B$ chose the formal ball $\omega_k=(x_k, t_k)\in \Omega$. 
Note that if we set
\be
\label{Strategy2}
	   A_k \equiv \bar  \psi (R_k, t_k + l_*( m_* b)) \cap X, 
\ee
then, by \eqref{BarBDiffuse}, there exists a formal ball $\omega'=(x'_k, t_{k}+ m_* b) \in \Omega$ such that 
\be
\label{Strategy1}
	\psi (\omega') \subset \psi(x_k, t_k) - \bar  \psi (R_k, t_k + l_*(m_* b)) = \psi(x_k, t_k) - A_k.
\ee
Thus,  we define the strategy of player $A$ to choose the nonempty set of legal moves
\be 
\label{InducedStrategy}
	\cal{L}_{b}(\omega_{k})\equiv \{ \omega = (x, t_k + \bar b) :  b_* \leq \bar b \leq m_* b, \    \psi(x, t_k + \bar b) \subset \psi(\omega_k) - A_k  \}.
\ee

 \emph{The strategy for player $A$ under the condition $(b_*)$.}
Let now $R_k= R(t_k)$, $m_*=1$ and $l_* = n(b_*)$ as in \eqref{bDiffuse} and set
\be
\label{StrongStrategy1}
	   A_k \equiv \bar  \psi (R(t_k), t_k + l_* b_*) \cap X. 
\ee
We define the strategy of player $A$ with respect to $\omega_k$ to choose the  set of legal moves
\be 
\label{StrongInducedStrategy}
	\cal{L}_{b}(\omega_{k})\equiv \{ \omega = (x, t_k + \bar b) :  b_* \leq \bar b \leq  b, \    \psi(x, t_k + \bar b) \subset \psi(\omega_k) - A_k  \},
\ee
which is nonempty by \eqref{bDiffuse}.
 

With respect to these strategies, we show our first main result.

\begin{theorem}
\label{Winning}
Let $\cal{F}$ be a nested and discrete family.

If $(\Omega, \psi)$ is $(b_*,n_*, L_*)$-diffuse with respect to $\cal{F}$, 
then \eqref{InducedStrategy} defines a weakly $(\psi, b_*,b)$-winning strategy for the set \textbf{Bad}$(\cal{F})$.

If $(\Omega, \psi)$ is strongly $b_*$-diffuse with respect to $\cal{F}$,
then  \textbf{Bad}$(\cal{F})$ is in particular $(\psi, a_*)$-winning for every $a_*\geq b_*$.
\end{theorem}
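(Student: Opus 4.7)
The plan is to show that any play of the weak $(\psi, b_*, b)$-game in which $A$ follows the strategy \eqref{InducedStrategy} produces, in the limit, a point of $\textbf{Bad}(\cal F)$. Since each $\psi(\omega_k)$ is a nonempty compact subset of $X$ and the sequence is nested, the intersection $\bigcap_k \psi(\omega_k)$ is nonempty; I pick any $x_\infty$ in it. It then suffices to exhibit a finite constant $c$, depending only on $b, b_*, n_*, L_*$ and the function $n(\cdot)$ from \eqref{BarBDiffuse}, such that $x_\infty \notin \bar\psi(R_\lambda, s_\lambda + c)$ for every $\lambda \in \Lambda$.

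The heart of the argument will be a \emph{time-window} observation. Because every legal move satisfies $t_{k+1} - t_k \leq m_* b$, every level $s \geq t_1$ lies in some window $(t_k - m_* b, t_k]$: take $k$ minimal with $t_k \geq s$, so that $t_{k-1} < s \leq t_k$ and $t_k - t_{k-1} \leq m_* b$. Applied with $s = s_\lambda$ and combined with the nesting condition (N), this locates an index $k = k(\lambda)$ with $R_\lambda \subset R_k$. In the general $(b_*, n_*, L_*)$-diffuse setting the strategy's relevant set must be taken as $R_k = R(t_k, n_*(m_* b + L_*))$ so as to match \eqref{BarBDiffuse}, and the window widens accordingly to $(t_k - n_*(m_* b + L_*), t_k]$. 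By the defining property of the strategy, $\psi(\omega_{k+1}) \subset \psi(\omega_k) - A_k$ with $A_k = \bar\psi(R_k, t_k + n(m_* b) m_* b)\cap X$; hence $x_\infty \notin A_k$. Invoking monotonicity \eqref{Mono} and $R_\lambda \subset R_k$, it suffices to choose $c$ large enough that $s_\lambda + c \geq t_k + n(m_* b) m_* b$, which is uniform in $\lambda$ thanks to $t_k - s_\lambda \leq n_*(m_* b + L_*)$; explicitly, $c = n_*(m_* b + L_*) + n(m_* b) m_* b$ works.

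What I expect to be the main obstacle is the bookkeeping for the few "initial" resonant sets with $s_\lambda < t_1$. By condition (D) there are only finitely many of them, and each has to be absorbed into the final $c$: either $x_\infty \notin R_\lambda$, in which case for sufficiently large $c$ the neighborhood $\bar\psi(R_\lambda, s_\lambda + c)$ shrinks away from $x_\infty$, or the discrepancy is eliminated by having $A$'s first move $\omega_2$ iterate the diffusion property \eqref{BarBDiffuse} a controlled, finite number of times to clear these finitely many obstructions. A second, more cosmetic issue is correctly propagating the parameters $(n_*, L_*)$: the text presents the strategy in the $(1,0)$-simplification, and one has to re-run the same derivation with the genuine $R_k = R(t_k, n_*(m_* b + L_*))$ and the window adjusted to match.

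For the second assertion, no further work is needed. The strong $b_*$-diffuse condition \eqref{bDiffuse} is precisely the hypothesis \eqref{LegalPlay3} of Lemma \ref{AbsWinWin} applied to the sets $A_k$ defined in \eqref{StrongStrategy1}; therefore the weakly $(\psi, b_*)$-winning property established above upgrades automatically to $(\psi, a_*)$-winning for every $a_* \geq b_*$, completing the proof.
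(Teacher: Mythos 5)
Your treatment of the second assertion is correct and matches the paper: under strong $b_*$-diffuseness the sets $A_k$ from \eqref{StrongStrategy1} satisfy \eqref{LegalPlay3}, so Lemma \ref{AbsWinWin} upgrades the weak winning strategy to $(\psi,a_*)$-winning for all $a_*\ge b_*$. For the first assertion your overall architecture is also the paper's---fix $x_\infty \in \bigcap_k \psi(\omega_k)$ and track how the excisions $A_k$ cumulatively prevent $x_\infty$ from lying in the shrunk $\bar\psi$-neighborhoods of the $R_\lambda$---but the pivotal containment ``$R_\lambda \subset R_k$'' is false, and the argument does not close around this step.

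Recall $R_k = R(t_k,m_*b) = R(t_k)-R(t_k-m_*b)$; the paper keeps this definition and uses $(b_*,n_*,L_*)$-diffuseness only through the implication $(b_*,n_*,L_*)\Rightarrow(b_*,1,0)$ (since $R(t,b)\subset R(t,n_*(b+L_*))$), so no widening of $R_k$ is called for. When $s_\lambda \in (t_k-m_*b,t_k]$, nesting (N) gives $R(t_k-m_*b)\subset R_\lambda$, so $R_\lambda\cap R(t_k-m_*b)$ is not deleted and $R_\lambda\not\subset R_k$ in general: a point $y\in R_\lambda$ whose minimal size falls in an earlier window lies in some $R_j$ with $j<k$, not in $R_k$. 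Avoiding $A_k$ alone therefore says nothing about $\bar\psi(y,s_\lambda+c)$ for such $y$. What is actually needed (and what the paper proves) is that $R_\lambda\subset\bigcup_{j=1}^N R_j$ where $N$ is minimal with $t_N\ge s_\lambda$ (so $t_N\le s_\lambda+m_*b$), that $x_\infty\notin A_j=\bar\psi(R_j,t_j+l_*m_*b)\cap X$ for \emph{every} $j$, and that $t_j\le t_N\le s_\lambda+m_*b$ together with \eqref{Mono} yields $\bar\psi(R_j,\,s_\lambda+(l_*+1)m_*b)\subset\bar\psi(R_j,\,t_j+l_*m_*b)$ for all $j\le N$; the union over $j$ then covers all of $R_\lambda$.

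Your residual worry about the sizes $s_\lambda<t_1$ is already built into the strategy: $m_*$ is the least integer with $m_*b\ge t_1-s_1$, so $t_1-m_*b\le s_1$ and the first window $(t_1-m_*b,t_1]$ reaches down to the smallest size; no further bookkeeping is needed. The first of your two proposed workarounds is in any case not available: \eqref{Mono} gives only monotonicity of $c\mapsto\bar\psi(R_\lambda,s_\lambda+c)$, not that these neighborhoods shrink to $R_\lambda$, so ``$x_\infty\notin R_\lambda$'' does not by itself force $x_\infty\notin\bar\psi(R_\lambda,s_\lambda+c)$ for large $c$.
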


\begin{proof}[Proof of Theorem \ref{Winning}]
We first show that the induced strategy is winning under the condition $(b_*, 1, 0)$.
Hence, let $x_0 \in \cap_{k \geq 1} \psi(\omega_k)$.
Assume that $x_0 \in \bar \psi(R_{\lambda_0}, s_{\lambda_0})$ for some $\lambda_0 \in \Lambda$ 
(if no such $\lambda_0$ exists, then $A$ has already won).
Since $t_1 - m_* b \leq s_1 $ and $t_k \to \infty$ as $t_{k+1} \geq t_k + b_*$, 
we know that $R_{\lambda_0}$ is covered by   $R_{\lambda_0}\subset \cup_{k=1}^N R_k$ 
by finitely many sets $R_k = R(t_k, m_*b)$
(where we let $N$ be the minimal such integer).
Thus, there exists  $1 \leq k \leq N$ such that $x_0  \in\bar \psi( R_k, s_{\lambda_0})$. 
Note that from the definition of $R_k$ and the minimality of $N$ we have $s_{\lambda_0}> t_k - m_* b  $.
Thus, \eqref{Strategy1} and the induced strategy \eqref{InducedStrategy} imply that
\be
\nonumber
	x_0 \in \psi(\omega_{k+1}) \subset \psi(\omega_k ) - \bar \psi(R_k, t_k +l_* m_* b) ,
\ee
and in particular,
\bea
\label{NotIn}
	x_0 &\notin&\bar \psi(R_k, t_k +l_* m_* b) 
				\\ \nonumber	
				&=& \bar \psi (R_k, t_k - m_* b  +(l_*+1)m_* b)
				 \supset \bar \psi (R_k, s_{\lambda_0} +(l_*+1)m_* b),
\eea
by \eqref{Mono}.
This shows that 
\be
\nonumber
	x_0 \notin \cup_{k=1}^N \bar \psi (R_k, s_{\lambda_0} + (l_*+1)m_* b  ) \supset \bar \psi(R_{\lambda_0}, s_{\lambda_0} + (l_*+1)m_* b).
\ee
Therefore, $x_0\in $  \textbf{Bad}$(\cal{F})$, since
 \be
 \nonumber
 	x_0 \notin \bigcup_{\lambda \in \Lambda}
	 \bar \psi(R_{\lambda}, s_{\lambda} + (l_*+1)m_* b).
\ee
Hence, $A$ wins and we defined a winning strategy for the parameter $b>b_*$.

Now, assume that $(b_*)$ is satisfied and note that in particular  \eqref{LegalPlay3} is satisfied with respect to the sets $A_k$ in \eqref{StrongStrategy1}.
Hence, since $(b_*)$ implies $(b_*, 1,0)$, the first part of the theorem and Lemma \ref{AbsWinWin} finish the proof.
\end{proof}

We want to show that the conditions are  preserved under maps which satisfy some kind of bi-Lipschitz-property and by finite intersections.

First, let $(\bar X, \bar \Omega_{\bar X},  \psi_{\bar X})$ and $(\bar Y,  \bar \Omega_{\bar Y}, \psi_{\bar Y})$ be two parameter spaces with monotonic functions.
For a given constant $L_* \geq 0$, 
consider a map $F : \bar X \to \bar Y$ such that 
\be
\label{Lipschitz}
	\psi_{\bar Y} (F(x), r+2L_*) \subset F(  \psi_{\bar X}(x,r+L_*)) \subset \psi_{\bar Y}(F(x), r),
\ee
for all formal balls $(x, r) \in  \Omega_{\bar X}$.
If both $ \psi_{\bar X}=B_1^{\bar X}$ and $\psi_{\bar Y}=B_1^{\bar Y}$, then $F$ is a $L_*$-bi-Lipschitz map.
Given a nested, discrete family of resonant sets $\cal{F}_{\bar X}=(\Lambda, R_{\lambda}, s_{\lambda})$,
consider the induced nested and discrete family in $\bar Y$,
\be
\nonumber
	\cal{F}_{Y} \equiv F(\cal{F}_{X}) \equiv (\Lambda, F(R_{\lambda}), s_{\lambda} - L_* ).
\ee
If $F$ is bijective, $X \subset \bar X$, then
it is readily checked that $F(\textbf{Bad}_X^{\psi_{\bar X}}(\cal{F}_X)) = \textbf{Bad}_{F(X)}^{\psi_{\bar Y}}(\cal{F}_{\bar Y} )$.

\begin{proposition}
\label{Preserving}
Let $(\bar X, \bar \Omega_{\bar X},  \psi_{\bar X})$, $(\bar Y,  \bar \Omega_{\bar Y},\psi_{\bar Y})$ 
and let $F : \bar X \to \bar Y$ be a bijective map which satisfies \eqref{Lipschitz}.
If $(\Omega_X, \psi_X)$ is [strongly $b_*$-diffuse] $(b_*,n_*, 2L_*)$-diffuse with respect to $\cal{F}_X$,
then, for $Y\equiv F(X)$, $(\Omega_Y, \psi_Y)$ is [strongly $(b_*+2L_*)$-diffuse] $(b_* + 2L_*, n_*, 0)$-diffuse with respect to $\cal{F}_Y$. 
\end{proposition}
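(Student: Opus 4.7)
The plan is to pull back a formal ball $\omega_Y=(y,t)\in\Omega_Y$ to a formal ball in $\Omega_X$ through $F^{-1}$ with a size-parameter shift of $L_*$, apply the diffuse hypothesis on $(\Omega_X,\psi_X)$ there, and push the resulting pair of formal balls back to $\bar Y$ through $F$. The three inclusions packaged in \eqref{Lipschitz} happen to line up in precisely the right directions: the right inclusion bounds $F(\psi_{\bar X}(x,\cdot))$ from above by $\psi_{\bar Y}(F(x),\cdot)$, which I would use on the containing $Y$-ball, and the left inclusion bounds it from below, which I would use both on the contained $Y$-ball and, after taking a union over resonant indices, on the removed $\psi_{\bar Y}$-neighborhood. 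Because $F$ is bijective it commutes with set differences, so the $X$-side difference passes to the $Y$-side cleanly.

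For the implication $(b_*,n_*,2L_*)\Rightarrow (b_*+2L_*,n_*,0)$, fix $b>b_*+2L_*$, set $b_X:=b-2L_*>b_*$, and for $\omega_Y=(y,t)$ consider $\omega_X:=(F^{-1}(y),\,t+L_*)\in\Omega_X$. The hypothesis supplies an integer $n_X=n(b_X)$ and a formal ball $\omega_X'=(x',\,t+L_*+b_X)$ with $\psi_X(\omega_X')\subset \psi_X(\omega_X)-\psi_{\bar X}(R(t+L_*,n_* b),\,t+L_*+n_X b_X)$, where I have used the identity $n_*(b_X+2L_*)=n_* b$. I would take $\omega_Y':=(F(x'),\,t+b)$ as the $Y$-side response. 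Three applications of \eqref{Lipschitz} yield $\psi_Y(\omega_Y')\subset F(\psi_X(\omega_X'))$ (left inclusion with $r=t+b_X$), $F(\psi_X(\omega_X))\subset \psi_Y(\omega_Y)$ (right inclusion with $r=t$), and, using the identity $R^Y(t,n_* b)=F(R^X(t+L_*,n_* b))$ coming from $s^Y_\lambda=s^X_\lambda-L_*$ together with the left inclusion applied pointwise and unionised, the inclusion
\[
\psi_{\bar Y}\bigl(R^Y(t,n_* b),\,t+n_X b_X+2L_*\bigr)\subset F\bigl(\psi_{\bar X}(R(t+L_*,n_* b),\,t+L_*+n_X b_X)\bigr).
\]
Bijectivity of $F$ then transfers the $X$-side set difference to a $Y$-side one, and monotonicity \eqref{Mono} permits the replacement of $n_X b_X+2L_*$ by $n_Y(b)\,b$ for any integer $n_Y(b)$ with $n_Y(b)\,b\ge n_X b_X+2L_*$, for example $n_Y(b):=n_X(b-2L_*)$ since $n_X b=n_X(b_X+2L_*)\ge n_X b_X+2L_*$.

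The strong-diffuse implication follows the same three-step template verbatim: one applies the strong $b_*$-diffuse hypothesis to $\omega_X$, takes $\omega_Y':=(F(x'),\,t+b_*+2L_*)$, and sets $n_Y:=n_X$, which works because $n_X b_*+2L_*\le n_X(b_*+2L_*)$. The main obstacle I anticipate is purely bookkeeping, namely aligning the three shift constants $0$, $L_*$, and $2L_*$ appearing in \eqref{Lipschitz} so that after pushing forward through $F$ the outer $Y$-parameter of the containing ball and the inner $Y$-parameter of the removed resonant neighborhood are simultaneously correct; once this alignment is in place, the argument reduces to the monotonicity of $\psi_{\bar Y}$ and the bijectivity of $F$.
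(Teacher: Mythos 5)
Your proposal is correct and follows essentially the same route as the paper: pull the $Y$-ball back to $\Omega_X$ with a $+L_*$ shift, invoke the diffuse hypothesis there, push the resulting pair forward through $F$, and use the two inclusions of \eqref{Lipschitz} together with monotonicity and $n_X\ge 1$ to align the parameters. The only cosmetic difference is that you fix the target parameter $b>b_*+2L_*$ and set $b_X=b-2L_*$, whereas the paper fixes $b>b_*$ on the $X$-side and lets the $Y$-side parameter be $b+2L_*$; the computations and the choice $n_Y=n_X$ are identical.
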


\begin{proof}
Assume that $(\Omega_X, \psi_X)$ is $(b_*,n_*, 2L_*)$-diffuse with respect to $\cal{F}_X$.
Let $(y, r) \in \Omega_Y$ and $b>b_*$.
There exists $n\in \N$ and $\omega'=(\bar x, r + L_* +b)\in \Omega_X$  such that
\be
\label{Gleichung}
	\psi_X(\omega') \subset \psi_X(F^{-1}(y), r+L_*) - \psi_{\bar X}(R_{\bar X}(r+L_*, n_*(b + 2L_*)), r+L_* +nb ).
\ee
From \eqref{Lipschitz} we have 
\bea
\nonumber
	\psi_Y(F(\bar x), r +  (b +2L_*) ) &\subset& F(\psi_X(\bar x, r + L_*+b)) 
		\\ \nonumber
		&=& F(\psi(\omega'))
		\\ \nonumber
		&\subset& F(\psi_X(F^{-1}(y), r + L_*) ) \subset \psi_Y(y, r).
\eea
Note that  $F(R_X(r+L_*, t)) = R_Y(r, t)$.
We obtain
\bea
\nonumber
	 \psi_{\bar Y}(R_{\bar Y}(r, n_*(b+2L_*)), r+ n (b + 2L_*)) &\subset & 
		\psi_{\bar Y}(R_{\bar Y}(r,n_*(b +2L_*)), r+2L_* + nb) 
		 \\\nonumber
		&\subset & F( \psi_{\bar X}(R_{\bar X}(r+L_*, n_*(b+2L_*)), r+L_*+nb).
\eea
By \eqref{Gleichung} we know that $F(\psi_X(\omega'))$ is disjoint to
$F( \psi_{\bar X}(R(r+L_*, n_*(b+2L_*)), r+L_*+nb)$ and hence
we see that $(\Omega_Y, \psi_Y)$ is $(b_* + 2L_* ,n_*,0)$-diffuse with respect to $\cal{F}_Y$.

The case when $(\Omega_X, \psi_X)$ is strongly $b_*$-diffuse with respect to $\cal{F}_X$ follows similarly.
\end{proof}

Now consider finitely many families $\cal{F}_i = (\Lambda^i, R^i_{\lambda^i}, s^i_{\lambda^i})$, $i=1, \dots, n_*$,
 of nested and discrete families in $\bar X$.
When $(\Omega, \psi)$ is strongly $b_*$-diffuse with respect to each $\cal{F}_i$,
we know from Theorem \ref{Winning} and properties of $\psi$-modified Schmidt games
that $\cap_{i=1}^{n_*} \textbf{Bad}(\cal{F}_i)$ is $(\psi, b_*)$-winning (and the same is true for countable intersections).
In the weaker setting, we show the following.

\begin{proposition}
\label{Intersection}
If $(\Omega, \psi)$ is $(b_*,  n_*, L_*)$-diffuse with respect to each family $\cal{F}_i$,
then $\cap_{i=1}^{n_*} \textbf{Bad}(\cal{F}_i)$ is weakly $(\psi, b_*)$-winning.
\end{proposition}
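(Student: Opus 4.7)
My plan is to interleave the single-family strategy of Theorem \ref{Winning} by cycling player $A$'s target among the $n_*$ families. The match between the number of families and the factor $n_*$ inside the window $n_*(b + L_*)$ appearing in $(b_*, n_*, L_*)$-diffuseness is precisely what allows the argument to go through, and explains why only finite intersections are claimed.

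Fix $b > b_*$ chosen by $B$ and set $i_k \equiv ((k-1) \bmod n_*) + 1$, so that each family is revisited every $n_*$ moves. Let $s_*$ be a common lower bound on the sizes across all $n_*$ families and choose $m_* \in \N$ minimal with $m_* b \geq t_1 - s_*$. For each $i$, apply $(b_*, n_*, L_*)$-diffuseness to $\cal{F}_i$ with parameter $m_* b > b_*$ to extract an integer $l_i \equiv n_i(m_* b)$. Given $B$'s move $\omega_k = (x_k, t_k)$, player $A$ takes $m_k = m_*$, sets
\be
\nonumber
A_k \equiv \bar\psi\bigl(R^{i_k}(t_k, n_*(m_* b + L_*)),\, t_k + l_{i_k} m_* b\bigr) \cap X,
\ee
and declares the legal moves $\cal{L}_b(\omega_k)$ exactly as in \eqref{InducedStrategy} with this $A_k$. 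The formal ball of length $m_* b$ produced by the condition lies in $\cal{L}_b(\omega_k)$, so the set is nonempty.

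The heart of the argument is a covering claim for each fixed family $i$. Let $k^i_1 < k^i_2 < \ldots$ enumerate the indices with $i_k = i$ and put $W^i_j \equiv (t_{k^i_j} - n_*(m_* b + L_*), t_{k^i_j}]$. I would verify $\bigcup_j W^i_j \supset [s_*, \infty)$ from two estimates: consecutive windows overlap because $t_{k^i_{j+1}} - t_{k^i_j} \leq n_* m_* b$ (the $n_*$ intermediate moves each have length at most $m_* b$), which is dominated by $n_*(m_* b + L_*)$; and the first window covers $s_*$ because $t_{k^i_1} \leq t_1 + (n_* - 1) m_* b$ together with the choice of $m_*$ yields $t_{k^i_1} - s_* \leq n_* m_* b \leq n_*(m_* b + L_*)$. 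By nestedness (N) each point of a resonant set has a well-defined minimal size, whence $R^i_\lambda \subset \bigcup_j R^i(t_{k^i_j}, n_*(m_* b + L_*))$ for every $\lambda \in \Lambda^i$.

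Finally, pick $x_0 \in \bigcap_k \psi(\omega_k)$. The strategy forces $x_0 \notin A_{k^i_j}$ for every $j$. Given $\lambda \in \Lambda^i$, the covering provides an index $j$ with $s^i_\lambda \in W^i_j$, so $t_{k^i_j} - s^i_\lambda < n_*(m_* b + L_*)$, and monotonicity \eqref{Mono} upgrades $x_0 \notin A_{k^i_j}$ into $x_0 \notin \bar\psi(R^i(t_{k^i_j}, n_*(m_* b + L_*)), s^i_\lambda + c)$ with the uniform constant $c \equiv n_*(m_* b + L_*) + l_i m_* b$. Combining over $j$ via the covering gives $x_0 \notin \bar\psi(R^i_\lambda, s^i_\lambda + c)$ for every $\lambda \in \Lambda^i$, i.e.\ $x_0 \in \textbf{Bad}(\cal{F}_i)$; since $i$ was arbitrary, $x_0 \in \bigcap_i \textbf{Bad}(\cal{F}_i)$. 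The main obstacle is calibrating $m_*$ so that both the initial deficit $t_1 - s_*$ and the worst-case per-family time gap $n_* m_* b$ are absorbed by the window length $n_*(m_* b + L_*)$; the exact form of $(b_*, n_*, L_*)$-diffuseness is what makes this bookkeeping just close.
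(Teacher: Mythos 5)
Your proof is correct and follows essentially the same interleaving strategy as the paper: cycle player $A$'s excluded family modulo $n_*$, invoke $(b_*,n_*,L_*)$-diffuseness with parameter $m_*b$ at each step so that the factor $n_*$ in the window width $n_*(m_*b+L_*)$ absorbs the $n_*$-move gap between consecutive visits to the same family, and conclude by the covering argument from Theorem \ref{Winning}. The only cosmetic differences are that you use the full window $n_*(m_*b+L_*)$ rather than the paper's $n_*m_*b$, and the per-family depth $l_{i_k}$ rather than the paper's summed $l_*=\sum_i l_i$; both variants are valid and equivalent in force.
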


\begin{proof}
Assume that $X$ is $(b_*,n_*,L_*)$-diffuse with respect to each family $\cal{F}_i$
and let $b>b_*$.
We only need to modify the strategy for player $A$ with respect to the sets $A_k$ in  \eqref{Strategy2}.
In fact, if $\omega_1=(x_1, t_1)\in \Omega$ is the first move of $B$, we let  again $m_* \in \N$ such that $\tilde b = m_*b\geq t_1 -s_1 $. 
Let $k= l n_* + s$ for $l\in \N_0$ and $1 \leq s \leq n_*$.
Denote by $R^s_l = R^{s}(t_k , n_* \tilde b)$, where $R^{s}$ is the subset of the resonant sets with respect to $\cal{F}_{s}$.
Moreover, let $l_*=l(\tilde b) = l_1 + \dots + l_{n_*}$, where $l_i= n_i(\tilde b)$ is the constant in \eqref{BarBDiffuse} with respect the family $\cal{F}_i$.
We therefore define
\be
\nonumber
	A_k = \bar \psi(R^s_l, t_k  +l_* \tilde b) \cap X.
\ee
By \eqref{BarBDiffuse}, there exists a formal ball
$\omega_{k+1} = (x_{k+1}, t_{k} + \tilde b) \in \Omega$ such that
\be
\nonumber
	\psi(\omega_{k+1}) \subset \psi(\omega_k) - \bar \psi(R^s_l, t_k  + l_* \tilde b) = \psi(\omega_k) - A_k,
\ee
which shows that the set $\cal{L}^{\psi}_{b}(\omega_{k})$ in \eqref{InducedStrategy} modified with respect to the set $A_k$ is nonempty.

Thus, for $s=1, \dots, n_*$ and  $x_0 \in \cap_{k \geq 1} \psi(\omega_k) $,
we deduce similarly to \eqref{NotIn} that  $x_0  \in \textbf{Bad}(\cal{F}_{s})$.
In particular, $x_0 \in \cap_s \ \textbf{Bad}(\cal{F}_{s})$ which is thus a $(\psi, b_*)$-weakly-winning set.
\end{proof}

Given $\bar Y_i, Y_i, \bar \psi_i$, $i=1, \dots, n_*$,  assume that $\cal{F}_i = (\Lambda_i, R^i_{\lambda^i}, s_{\lambda^i})$ 
is a nested discrete family in $\bar Y_i$ and that  $F_i : \bar Y_i \to \bar X$ is a bijective map satisfying \eqref{Lipschitz} for a constant $L_*$ with $F(Y_i)=X$.
As a corollary, if each $(\Omega_i, \psi_i)$ is $(b_*, n_*, L_*)$-diffuse with respect to $\cal{F}_i$, 
then
\be
\label{WeaklyIncompressible}
	\cap_{i=1}^{n_*} F_i(\textbf{Bad}_{Y_i}^{\bar \psi_i}(\cal{F}_i)) \subset X
\ee
is a weakly $(\psi_X, b_* + 2L_*)$-winning set. 

\begin{remark}
Let $\bar \Omega_i= \bar X_i \times (t_*,\infty)$ and $\bar \psi_i$ be given for $i=1,2$, 
where $\bar \psi_1 \times \bar \psi_2( x_1, x_2, t)= \bar \psi_1(x_1, t) \times \bar \psi_2(x_2, t)$.
Moreover, let $\cal{F}_i = (\Lambda, R^i_{\lambda}, s_{\lambda})$ be nested and discrete
with the same index set and the same sizes.
If $(b_*)$ or $(b_*,n_*, L_*)$  respectively is satisfied for both $X_i \subset \bar X_i$ and $\cal{F}_i$,
then $(b_*)$ or $(b_*,n_*, L_*)$  respectively is satisfied for $X_1\times X_2$
with respect to 
$\cal{F}=(\Lambda, R_{\lambda}^1 \times R_{\lambda}^2, s_{\lambda})$ and  $\bar \psi_1\times \bar  \psi_2$.
\end{remark}


\subsection{Diffuse spaces and absolutely decaying measures.}
\label{DiffuseSpaces}

In this subsection we first discuss diffusion properties of the subspace $X$ in $\bar X$, or rather of the parameter spaces $(\Omega, \psi)$ in $(\bar \Omega, \bar \psi)$,
and then relate these properties to the (local) structure and distribution of the resonant sets of a given family $\cal{F}$ in $\bar X$.

In the following, let  $X$ be a nonempty closed subset of a proper metric space $\bar X$ with a given monotonic function $\bar \psi$.
We give a special class of diffuse spaces $X $ in which the resonant sets might be more general than points 
but are still nicely structured and distributed.
More precisely, let $\cal{S} = \{S \subset \bar X\}$ be a given nonempty collection of subsets of $\bar X$. 
For instance, let $\cal{S}$ be the set of metric spheres $S(\bar \omega) \equiv \{y \in \bar X : d(\bar x,y)=e^{-t}\}$, where $\bar \omega=(\bar x, t) \in \bar \Omega$,
or the set of affine hyperplanes in $\R^n$.

For $b_*> 0$, $(\Omega,  \psi)$ is called \emph{$b_*$-diffuse with respect to $\cal{S}$},
if for any formal ball $\omega=(x,t) \in \Omega$ 
and any set $S\in \cal{S}$ 
there exists a formal ball $\omega' = (x', t+b_*) \in \Omega$ such that 
\be
\label{NbDiffuse}
	\psi(\omega') \subset \psi(\omega) - \bar \psi(S,  t + b_*).
\ee

For the standard function $\psi=B_1$, our definition  above is similar to the following special cases.
\begin{itemize}
\item[1.]
When $\bar X=\R^n$ is the Euclidean space and  $\cal{S}$ is the set of $k$-dimensional affine hyperplanes in $\R^n$ ($0\leq k <n$),
then $X \subset \R^n$ is called \emph{$k$-dimensionally hyperplane diffuse}; see \cite{BroderickEtAl}.

\item[2.] When $k=0$, that is, $\cal{S}$ is the set of points in a metric space $\bar X$, and $\beta=b_*$, then
$X \subset \bar X$ is called  \emph{$\beta$-diffuse}; see  \cite{MayedaMerrill}.
\end{itemize}

For a class of $\beta$-diffuse spaces, 
let $X$ be a \emph{uniformly perfect} metric space, that is, there exists $r_*\in \R \cup \{- \infty\}$ and a
constant  $0< \nu <\infty $ such that for any metric ball $B(x,e^{-r})$, $x \in X$, $r>r_*$ with $X-B(x,e^{-r}) \neq \emptyset$,
we have 
\be
\nonumber
	(B(x, e^{-r})-B(x,e^{-(\nu + r)}))\cap X \neq \emptyset.
\ee 
Similar to \cite{MayedaMerill}, Lemma 2.4, we show the following.

\begin{lemma} 
\label{UniformlyPerfect}
If $X$ is uniformly perfect with respect to $ \nu> 0$,
then $X$ is $\beta$-diffuse for any $\beta \geq  \nu + \log(4) + \log(4/3)$.
\end{lemma}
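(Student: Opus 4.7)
The plan is to unwind the definition of $\beta$-diffuseness into an elementary geometric condition on the candidate point $x'$, and then to split into two cases according to whether $p$ lies far from $x$ or close to it, handling the latter by invoking uniform perfectness at one carefully chosen scale. Fix $\omega=(x,t) \in \Omega$ and $p \in \bar X$, and write $r=e^{-t}$ and $r'=e^{-(t+\beta)}$. The hypothesis $\beta \geq \nu + \log 4 + \log(4/3)$ is equivalent to the quantitative bound
\begin{equation*}
r' \leq \tfrac{3r}{16 e^{\nu}}, \qquad \text{so that} \qquad 4r' \leq \tfrac{3r}{4 e^{\nu}}.
\end{equation*}
The inclusion $\psi(x',t+\beta) \subset \psi(\omega) - \bar\psi(\{p\},t+\beta)$ will be secured once we produce $x' \in X$ satisfying $d(x,x') + r' \leq r$ (which forces $B(x',r') \subset B(x,r)$) together with $d(x',p) > 2r'$ (which forces $B(x',r') \cap B(p,r') = \emptyset$).

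First, if $d(x,p) > 2r'$, I would simply take $x'=x$ and both conditions are immediate since $r' < r$. The substantive case is $d(x,p) \leq 2r'$, in which $p$ sits very close to $x$. Here I would apply uniform perfectness at the point $x$ with parameter $r_0 = t + \log(4/3)$, so that $e^{-r_0} = 3r/4$; under the generic assumption $X \not\subset B(x, 3r/4)$ this yields $y \in X$ with $\tfrac{3r}{4e^\nu} < d(x,y) \leq \tfrac{3r}{4}$. The triangle inequality and the quantitative bound on $r'$ then give
\begin{equation*}
d(y,p) \geq d(x,y) - d(x,p) > \tfrac{3r}{4e^\nu} - 2r' \geq \tfrac{3r}{4e^\nu} - \tfrac{3r}{8e^\nu} = \tfrac{3r}{8e^\nu} \geq 2r',
\end{equation*}
while $d(x,y) + r' \leq \tfrac{3r}{4} + \tfrac{3r}{16} < r$. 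Hence $x' := y$ satisfies both required conditions and closes this case.

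The main obstacle I expect is the degenerate situation $X \subset B(x, 3r/4)$, in which the uniform perfectness annulus at scale $3r/4$ is empty. Here the containment half of the inclusion becomes automatic because $X \subset B(x, 3r/4) \subset B(x, r-r')$, so any $x' \in X$ satisfies the first condition, and only the genuinely set-theoretic disjointness $B(x',r') \cap B(p,r') \cap X = \emptyset$ (which is \emph{weaker} than $d(x',p) > 2r'$) need be arranged. If some point of $X$ happens to lie outside $B(p,2r')$ I take it as $x'$; otherwise $X \subset B(p,2r')$, which forces $\mathrm{diam}(X) \leq 4r'$ and places us at a strictly smaller geometric scale. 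At that smaller scale I would descend by iterated uniform perfectness at parameters $r_0 > r_*$ until an annulus with nontrivial intersection with $X$ is produced, and then rerun the Case 2 argument at the new scale to locate a point of $X$ at distance more than $r'$ from every point of $X \cap B(p,r')$. This iterative reduction is where the delicate bookkeeping lies, but the quantitative margin built into the bound $\beta \geq \nu + \log 4 + \log(4/3)$ is tailored precisely so that one round of the calculation above suffices.
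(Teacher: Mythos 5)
Your two cases and the choice of annulus coincide exactly with the paper's argument: the paper also takes $x'=x$ when $d(x,\bar x)>2e^{-(t+\beta)}$, and otherwise applies uniform perfectness at parameter $t+c$ with $c=\beta-\nu-\log 4\ge\log(4/3)$ (equivalently, at radius $e^{-c}e^{-t}\le\tfrac34 e^{-t}$) to get $4e^{-(t+\beta)}<d(x,x')\le e^{-t}-e^{-(t+\beta)}$, which is precisely your pair of inequalities with the same numerical margins. The one genuine difference is your final paragraph: the paper simply invokes the uniform perfectness conclusion without discussing the degenerate situation $X\subset B(x,\tfrac34 e^{-t})$, whereas you flag it and sketch an ``iterated'' descent. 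That sketch, however, does not actually close the gap---once $\mathrm{diam}(X)\le 4e^{-(t+\beta)}$ and $p$ is near the center of $X$, descending to smaller annuli around $x$ produces points that are closer to $x$ (hence to $p$), so the separation you need is not obtained; the case is really excluded by the convention that $t_*$ (hence every $t>t_*$ appearing in $\Omega$) is taken large enough relative to $r_*$ and $\mathrm{diam}(X)$ so that $X\not\subset B(x,e^{-(t+c)})$, which is the implicit reading of the paper's argument.
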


\begin{proof}
Let $x \in X$, $r>r_*$  and $\bar x \in \bar X$.
If $d(x,\bar x) > 2 e^{-(r + \beta)}$ then for $x'=x$ we have $B(x', e^{-(r + \beta)}) \subset B(x, e^{-r}) - B(\bar x, e^{-(r + \beta)})$.
On the other hand, if $d(x,\bar x) \leq 2 e^{-(r + \beta)}$ then $B(\bar x, e^{-(r + \beta)}) \subset B(x, 3e^{-(r + \beta)})$.
Let $c= \beta - \nu - \log(4) \geq \log(3/4)$.
Since $X$ is uniformly perfect, there exists $x' \in (B( x, e^{-(r  + c)})-B(x,e^{-(\nu + r  + c)})) \cap X$.
Hence,  
\be
\nonumber
	4e^{-(r+ \beta)}  \leq e^{-(r+\nu + c)} < d(x,x') \leq  e^{-(r +c )} \leq \tfrac{3}{4}e^{-r} \leq e^{-r} - e^{-(r + \beta )}.
\ee
Again we have $B(x', e^{-(\beta + r)} ) \subset B(x, e^{-r} ) - B(\bar x,e^{-(\beta + r)} )$.
\end{proof}

\noindent 
Consider the following examples of $b_*$-diffuse  spaces $X \subset \bar X$.
\begin{itemize}
\item[1.] 
If  $\Gamma$ is a non-elementary finitely generated Kleinian group acting on the hyperbolic space $\H^{n+1}$ (the unit ball model), 
then the limit set $X=\Lambda\Gamma \subset S^n=\bar X$ of $\Gamma$ is uniformly perfect by \cite{JarviVuorinen}. 
For the definitions see Subsection \ref{CAT(-1)}.
\item[2.] Let $n\geq 1$. If $\Sigma^+ = \{0,\dots,n\}^{\N}$ 
denotes the set of one-sides sequences in the symbols $\{0,1,\dots ,n\}$, 
together with the metric $d^+(w, \bar w)\equiv e^{-\min\{ i \geq 1: w(i) \neq \bar w(i) \}}$ for $w \neq \bar w$ and $d(w, w)\equiv0$, 
then $(\Sigma^+,d)$ is compact and $\beta$-diffuse for $\beta= 1$.
\item[3.]
Let $T$ be a tree of valence at least $3$ with the path metric such that every edge is of length $1$.
For a vertex point $o\in T$,
let $d_o$ be the visual metric (see Section \ref{CAT(-1)} for the definition) on the set $\partial T$ of ends of $T$.
Then $(\partial T, d_o)$ is compact and $1$-diffuse.
\item[4.]
If $X$ is the support of a locally finite Borel measure on $\bar X=\R^n$ which is absolutely $\delta$-decaying,
then there exists $b_* = b_*(\delta)>0$ such that $X$ is $(n-1)$-dimensionally $b_*$-diffuse.
For the definition and the proof see below.
Moreover,  the following result is due to \cite{LindenstraussEtAl}.
Let $\{S_1, \dots, S_k\}$ be an irreducible family of contracting self-similarity maps of $\R^n$ satisfying the
open set condition and let $X$ be the attractor.
If $\mu$ is the restriction of the $\delta$-dimensional Hausdorff-measure to $X$, $\delta=$ dim$(X)$,
then $\mu$ is absolutely $\a$-decaying and satisfies a power law with respect to the exponent $\delta$.
Particular examples of such sets are regular Cantor-sets, Koch's curve and the Sierpinski gasket.
\end{itemize}

In the following, consider a nested and discrete family $\cal{F} = ( \Lambda, R_{\lambda}, s_{\lambda})$  of resonant sets in $\bar X$.
We are interested in properties of $\cal{F}$ such that condition $(b_*)$ is 'inherited' from a given structure of the parameter space.
The family $\cal{F}$ is called \emph{locally contained in $\cal{S}$} (with respect to $(\bar \Omega, \bar \psi)$)
if there exists $ l_*\geq 0$
and a number $n_*\in \N$
such that for all $(x,t) \in \Omega$ we have
\be
\label{ContainedInSpheres}
	\bar \psi(x, t+ l_* ) \cap R(t) \subset \bigcup_{i=1}^{n_*} S_i
\ee
is contained in at most $n_*$  sets $S_i$ of $\cal{S}$.%
\footnote{ Note that if $\bar \psi(x, t+ l_* ) \cap R(t)$ is empty, \eqref{ContainedInSpheres} is trivially satisfied. }

For a constant $d_*>0$, we say that the parameter space $(\Omega, \psi)$ is \emph{$d_*$-separating} if for all formal balls $(x,t) \in \Omega$
and for any set $M$ disjoint to $\bar \psi(x,t)$, we have
\be
\label{Separating}
		\bar \psi(x,t + d_*) \cap \bar \psi(M, t + d_*) = \emptyset.
\ee
Clearly, the standard function $B_{\sigma}$ is $\log(3)/\sigma$-separating in a proper metric space $\bar X$.

\begin{theorem} 
\label{DiffuseProposition}
Let $(\Omega, \psi)$ be $b_*$-diffuse with respect to $\cal{S}$,  $d_*$-separating and $\cal{F}$ be locally contained in $\cal{S}$ with $n_*=1$.

Then $(\Omega, \psi)$ is strongly $\bar b_*$-diffuse with respect to $\cal{F}$ where $\bar b_*=l_*+ d_* + b_*$.
Hence, $\textbf{Bad}(\cal{F})$ is $(\psi, \bar b_*)$-winning and moreover absolute  $(\psi, \bar b_*)$-winning with respect to $\cal{S}$.%
\footnote{ We remark that in \eqref{HAW} we considered a collection $\cal{S}$ of sets in $X$ instead of $\bar X$ since the supspace $\bar X$ was not yet introduced.  }
\end{theorem}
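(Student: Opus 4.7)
The plan is to verify that $(\Omega, \psi)$ is strongly $\bar b_*$-diffuse with respect to $\cal{F}$ in the sense of $(b_*)$ with the constant $n = 1$; the two winning assertions then drop out of Theorem \ref{Winning} together with Lemma \ref{AbsWinWin}, and of a mild repackaging of the strategy \eqref{StrongInducedStrategy} to fit the form \eqref{HAW}. Given a formal ball $\omega = (x,t) \in \Omega$, I would first invoke local containment with $n_* = 1$ to produce a single $S = S(x,t) \in \cal{S}$ with $\bar\psi(x, t+l_*) \cap R(t) \subset S$. Then I would apply the $b_*$-diffusion of $(\Omega, \psi)$ with respect to $\cal{S}$ not at level $t$ but at the deeper formal ball $(x, t+l_*+d_*) \in \Omega$ against the set $S$, producing $x' \in X$ with $\psi(x', t+\bar b_*) \subset \psi(x, t+l_*+d_*) - \bar\psi(S, t+\bar b_*)$; the depth matches exactly $\bar b_* = l_* + d_* + b_*$, and I set $\omega' := (x', t+\bar b_*)$.

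Next I would verify $\psi(\omega') \subset \psi(\omega) - \bar\psi(R(t), t+\bar b_*)$. The inclusion in $\psi(\omega)$ is immediate by monotonicity, so the substance is the disjointness from $\bar\psi(R(t), t+\bar b_*)$, which I would prove by splitting $R(t) = R^{\mathrm{in}} \cup R^{\mathrm{out}}$ with $R^{\mathrm{in}} := R(t) \cap \bar\psi(x, t+l_*)$ and $R^{\mathrm{out}} := R(t) \setminus \bar\psi(x, t+l_*)$. The inside piece is absorbed by $S$, so $\bar\psi(R^{\mathrm{in}}, t+\bar b_*) \subset \bar\psi(S, t+\bar b_*)$ is disjoint from $\psi(\omega')$ by construction. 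For the outside piece, the $d_*$-separating property applied to $(x, t+l_*) \in \Omega$ and the disjoint set $M := R^{\mathrm{out}}$ yields $\bar\psi(x, t+l_*+d_*) \cap \bar\psi(R^{\mathrm{out}}, t+l_*+d_*) = \emptyset$; since $\psi(\omega') \subset \bar\psi(x, t+l_*+d_*)$ and, by monotonicity with $\bar b_* \geq l_* + d_*$, $\bar\psi(R^{\mathrm{out}}, t+\bar b_*) \subset \bar\psi(R^{\mathrm{out}}, t+l_*+d_*)$, the disjointness transfers. This establishes $(b_*)$ with constants $\bar b_*$ and $n = 1$, so by Theorem \ref{Winning} and Lemma \ref{AbsWinWin}, \textbf{Bad}$(\cal{F})$ is $(\psi, a_*)$-winning for every $a_* \geq \bar b_*$.

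For the absolute $(\psi, \bar b_*)$-winning assertion with respect to $\cal{S}$, I would run the same strategy but declare $A$'s removed set at step $k$ to be $A_k := \bar\psi(S_k, t_k+a_k) \cap X$ with $S_k := S(x_k, t_k)$ from local containment and $a_k \geq b$; this is precisely of the form required in \eqref{HAW}, and the construction of $\omega'$ above shows $B$ always has a legal reply. The verification that any $x_0 \in \bigcap_k \psi(\omega_k) \setminus \bigcup_k A_k$ lies in \textbf{Bad}$(\cal{F})$ then mirrors the proof of Theorem \ref{Winning}, again with the inside/outside split at each step $k$: the inside portion of the relevant resonant set is absorbed by $S_k$ and hence kept at a uniform distance from $x_0$ via $x_0 \notin A_k$, while the outside portion is kept separated from $x_0$ through the $d_*$-separating property built into the step size. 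The main obstacle I anticipate is the careful level bookkeeping to pin down the precise value $\bar b_* = l_* + d_* + b_*$, and, for the absolute claim, tracking the outside contribution uniformly in $\lambda$ without needing $A$ to excise it explicitly at each step.
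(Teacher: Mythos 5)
Your proof is correct and takes essentially the same approach as the paper: establish the claim that inside $\psi(x, t+l_*+d_*)$ the $\bar\psi$-neighborhood of $R(t)$ is absorbed by that of $S$, then apply $b_*$-diffusion w.r.t.\ $\cal{S}$ at the shifted formal ball $(x,t+l_*+d_*)$, and read off strong $\bar b_*$-diffusion with $n=1$, after which Theorem \ref{Winning} and Lemma \ref{AbsWinWin} finish. The only cosmetic difference is the decomposition: you split $R(t)$ into $R^{\mathrm{in}}=R(t)\cap\bar\psi(x,t+l_*)$ and $R^{\mathrm{out}}=R(t)\setminus\bar\psi(x,t+l_*)$, whereas the paper splits off $M=R(t)-S$; since $R(t)-S\subset R^{\mathrm{out}}$ and $R^{\mathrm{in}}\subset S$ by local containment, these are equivalent and both invoke $d_*$-separation at $(x,t+l_*)$ against the outside piece. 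For the absolute-winning assertion, the paper is equally brief (it replaces $A_k$ in \eqref{StrongStrategy1}--\eqref{StrongInducedStrategy} with $\bar\psi(S,t_k+b)\cap X$ and says ``following the proof of Theorem \ref{Winning}''), and the level bookkeeping you flag as the anticipated obstacle — controlling the contribution of $R(t_k)$ outside $\bar\psi(x_k,t_k+l_*)$ when $A$ is only permitted to excise a $\psi$-neighborhood of a single $S_k\in\cal{S}$ — is a genuine subtlety that the paper does not spell out either; your awareness of it is a point in your favor.
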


\noindent 
Note that Theorem \ref{DiffuseProposition} with $\psi=B_{\sigma}$ implies Theorem \ref{IntroductionThm}.

\begin{proof}
Given $(x,t ) \in \Omega$ and $l_*$, as well as $S\in \cal{S}$ from the definition of  \eqref{ContainedInSpheres},
we claim that, 
for $s\geq 0$, 
\be
\label{ContainedInS}
	\psi(x, t + l_* + d_*) \cap \bar \psi( R(t), t  + l_* +  d_* + s) \subset \psi( x, t+ l_* + d_*) \cap \bar \psi(S, t  + l_* + d_* + s).
\ee
In fact, let $M$ be the set  $R(t) - S$ which is disjoint to $\bar \psi(x, t + l_*)$ by   \eqref{ContainedInSpheres}.
The $\bar \psi$-ball $\bar \psi(x, t+l_* + d_*)$
is, by  \eqref{Separating}, disjoint to 
\be
\nonumber
	\bar \psi(M, t + l_* + d_*) \supset \bar \psi(M, t + l_* + d_* + s),
\ee
for $s\geq 0$.
This shows the above claim. 

Set $\bar b_* = l_* +d_* +b_*$.
Since $(\Omega, \psi)$ is $b_*$-diffuse with respect to $\cal{S}$, applied to the formal ball $\omega=(x,t + l_* + d_*)$,
there exists $\omega' =(x', t+ l_* + d_* + b_*) = (x', t +\bar b_*) \in \Omega$ as in \eqref{NbDiffuse}.
In particular, by monotonicy $\bar \psi$, we obtain for every $\lambda\in \Lambda$ with $s_{\lambda} \leq t$ that
\bea
\label{ChooseSpheres}
	\psi(\omega') &\subset& \psi(x, t+ l_* + d_*) - \bar \psi( S, t + l_* + d_* +b_*) 
	\\ \nonumber
	&\subset& \psi(x, t+ l_* + d_*) - \bar \psi( R(t), t + l_* + d_* +b_*) 
	\subset \psi(x, t) - \bar \psi( R_{\lambda}, t + \bar b_*).
\eea
This shows that $(\Omega, \psi)$ is strongly $\bar b_*$-diffuse with respect to $\cal{F}$.

In fact, \eqref{ChooseSpheres} shows that we can even choose, for a parameter $b\geq \bar b_*$, 
\be
\nonumber
	 \bar \psi( S, t_k +\bar b_*) \supset A_k \equiv \bar \psi( S, t_k +b)  \cap X \supset \bar \psi( R(t_k), t_k + b) \cap X
\ee
in \eqref{StrongStrategy1} and \eqref{StrongInducedStrategy} respectively.
Following the proof of Theorem \ref{Winning} shows that $\textbf{Bad}(\cal{F})$ is an absolute  $(\psi, \bar b_*)$-winning set with respect to $\cal{S}$ (as define in \eqref{HAW}).
\end{proof}

As a special case, let $\bar \psi =  B_{\sigma}$ be the standard function and $\bar X$ be a proper metric space.
Recall that $d_* \leq \log(3)/\sigma$,
and assume that for all distinct points $x$, $ y \in R_{\lambda}$ we have
\be
\label{Distinct}
	d(x,y) > \bar c \cdot e^{- \sigma s_{\lambda}},
\ee
for some constant $\bar c>0$.
It is readily checked that, setting $l_* = -\log(\bar c) + \log(2)$ and $\cal{S}$ to be the set of points,
the following is a corollary of Theorem \ref{DiffuseProposition} with $\sigma=1$.

\begin{proposition} 
\label{Sufficient}
Assume that $X$ is $\beta$-diffuse.
If \eqref{Distinct} is satisfied for $\sigma=1$, 
then $(\Omega, B_{1})$ is strongly $\bar b_*$-diffuse with respect to $\cal{F}$, where $\bar b_*=  -\log(\bar c)  + \log(2)  + d_*+ \beta$.
In particular, \textbf{Bad}$(\cal{F})$ is absolute-winning (in the sense of McMullen).
\end{proposition}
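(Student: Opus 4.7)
The plan is to invoke Theorem \ref{DiffuseProposition} with $\sigma=1$ and with $\cal{S}$ taken to be the collection of singletons (i.e., points) in $\bar X$. To apply that theorem I must verify three ingredients: (i) $(\Omega, B_{1})$ is $\beta$-diffuse with respect to $\cal{S}$, (ii) $(\Omega, B_{1})$ is $d_*$-separating, and (iii) the family $\cal{F}$ is locally contained in $\cal{S}$ with $n_*=1$ and $l_* = -\log(\bar c) + \log(2)$. Ingredient (i) is just a restatement of the hypothesis that $X$ is $\beta$-diffuse, since by definition this means that for every formal ball and every point in $\bar X$ one finds a concentric formal ball of larger height avoiding the point's $\bar\psi$-neighborhood. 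Ingredient (ii) is recorded explicitly in the text preceding Theorem \ref{DiffuseProposition}: the standard function $B_{\sigma}$ is $\log(3)/\sigma$-separating on a proper metric space, so with $\sigma=1$ we may take the constant $d_*$ appearing in the statement.

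The key step is therefore (iii). I would fix a formal ball $(x,t) \in \Omega$, set $l_* = -\log(\bar c) + \log(2)$, and examine
\[
	B(x, e^{-(t+l_*)}) \cap R(t) = B(x, e^{-(t+l_*)}) \cap R_{\lambda_t},
\]
where $R(t) = R_{\lambda_t}$ is the maximal resonant set with $s_{\lambda_t} \leq t$ supplied by conditions (N) and (D). Suppose, for contradiction, that this intersection contained two distinct points $y, y' \in R_{\lambda_t}$. By the triangle inequality and the choice of $l_*$,
\[
	d(y, y') \leq 2 e^{-(t+l_*)} = \bar c \cdot e^{-t} \leq \bar c \cdot e^{-s_{\lambda_t}},
\]
where the last inequality uses $s_{\lambda_t} \leq t$. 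This directly contradicts the separation hypothesis \eqref{Distinct} applied to $R_{\lambda_t}$. Hence the intersection consists of at most one point, which is exactly \eqref{ContainedInSpheres} with $n_*=1$ and the chosen $l_*$.

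With (i)--(iii) in place, Theorem \ref{DiffuseProposition} immediately yields that $(\Omega, B_{1})$ is strongly $\bar b_*$-diffuse with respect to $\cal{F}$, where
\[
	\bar b_* = l_* + d_* + \beta = -\log(\bar c) + \log(2) + d_* + \beta,
\]
as claimed. For the final assertion, recall from the second conclusion of Theorem \ref{DiffuseProposition} that $\textbf{Bad}(\cal{F})$ is absolute $(B_1,\bar b_*)$-winning with respect to $\cal{S}$; since here $\cal{S}$ is the collection of points in $\bar X$, this is, by the discussion in Subsection \ref{ModifiedGame}, precisely McMullen's absolute winning game. Therefore $\textbf{Bad}(\cal{F})$ is absolute-winning in the sense of McMullen. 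No serious obstacle is expected; the whole argument reduces to the one-line triangle-inequality computation for (iii), after which the result is a bookkeeping invocation of Theorem \ref{DiffuseProposition}.
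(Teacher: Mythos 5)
Your proof is correct and follows exactly the route the paper intends: the text preceding Proposition~\ref{Sufficient} asserts that the statement is a corollary of Theorem~\ref{DiffuseProposition} with $\cal{S}$ the set of points and $l_* = -\log(\bar c) + \log(2)$, and you have supplied the short triangle-inequality check (using condition (N) to identify $R(t)=R_{\lambda_t}$, the discreteness from (D), and $s_{\lambda_t}\leq t$) that verifies the local-containment hypothesis \eqref{ContainedInSpheres} with $n_*=1$. The remaining ingredients are, as you note, mere restatements of definitions, so no gaps remain.
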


\begin{remark}
Note that Condition \eqref{Distinct} (with $\sigma=1$) is similar to, but in fact weaker than the condition
\be
\nonumber
	d(x, y) \geq \sqrt{e^{-s_{\lambda}} e^{- s_{\lambda'}}},
\ee
for $x \in R_{\lambda}$, $y\in R_{\lambda'}$.
For $X=\R^n$, this condition was considered in a similar setting by \cite{Dani2} and 
recently by \cite{DaniShah} where it was called $\cal{B}$-set.
\end{remark}

For another class of examples of diffuse spaces, we extend the notion of absolutely decaying measures on $X$, introduced in \cite{LindenstraussEtAl},  
to the setting of parameter spaces $(\Omega, \psi)$ and collections $\cal{S}$.
Note that in the Euclidean setting, already \cite{Fishman} and \cite{BroderickEtAl} used absolutely decaying measures in relation with Schmidt games.

A subset $S\subset \bar X$ is called \emph{$\bar \psi$-Borel}, if $\bar \psi(S, t)$ is a Borel set for all $t>t_*$.
Assume that every Borel set in $X$ is $\bar \psi$-Borel.

Given a locally finite Borel measure $\mu$ with supp$(\mu)=X$ and a collection $\cal{S} \equiv \{ S \subset \bar X \}$  of $\bar \psi$-Borel sets, 
$(\Omega, \psi, \mu)$ is said to be \emph{absolutely $(\delta, c_{\delta} )$-decaying with respect to $\cal{S}$},
where $\delta$, $c_{\delta}>0$,
if for all $(x,t) \in \Omega$ and $S \in \cal{S}$ we have for all $s\geq 0$ that
\be
\label{SphereDecaying}
	\mu(\psi(x,t) \cap \bar \psi(S, t + s)) \leq c_{\delta} e^{- \delta s} \ \mu(\psi(x,t)).
\ee
The function $f(s) =  c_{\delta} e^{- \delta s}$ determines the rate of the decay of  the measure of 
 $\bar \psi(S, t + c)$ in $\psi(x,t)$ in terms of the relative size $s$ of the $\bar \psi$-neighborhood of $S$.

Clearly, if $\psi=B_1$ and $\cal{S}$ denotes the collection of affine hyperplanes in $\bar X=\R^n$,
$\mu$ corresponds to an \emph{absolutely $\delta$-decaying} measure in the classical sense (see \cite{LindenstraussEtAl}).

We say that $(\Omega, \psi)$ is \emph{$d_*$-separating with respect to $\cal{S}$}, if
for all formal neighborhoods $(S,t) \in \cal{P}$, $S\in \cal{S}$ and all $x, y \in X$,
\bea
\label{SeparatingSpheres}
	x\not \in \bar \psi(S,t) &\implies& \psi(x,t + d_*) \cap \bar  \psi(Y, t + d_*) = \emptyset 
	\\ \nonumber
	x\in \psi(y,t+ d_*) &\implies& \psi(x,t+ d_*) \subset  \psi(y,t).
\eea
Clearly, if  $\bar X=\R^n$ and $S$ is an affine hyperplane, then $B_{\sigma}$ is $\log(2)/\sigma$-separating with respect to $\cal{S}$.

\begin{proposition}
\label{AbsDecMeasure}
Let $(\Omega, \psi, \mu)$ be absolutely $(\delta, c_{\delta} )$-decaying with respect to $\cal{S}$
and $(\Omega, \psi)$ be $d_*$-separating with respect to $\cal{S}$.
Then $(\Omega, \psi)$ is $b_*$-diffuse with respect to $\cal{S}$ for all $b_* >  \log(c_{\delta})/\delta + 2d_*$.
\end{proposition}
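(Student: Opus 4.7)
The plan is to combine the measure-theoretic absolute decay with the two geometric clauses of the $d_*$-separating property in a single step: use the decay to \emph{find} a candidate center $x'\in X$ that lies safely outside a $\bar\psi$-neighborhood of $S$, then use separation to \emph{inflate} this fact from a point statement to a containment of $\psi$-balls. The choice of margins will be symmetric in $d_*$: one copy of $d_*$ so that the center we produce can be placed inside $\psi(x,t+d_*)$ (and therefore controls a $\psi$-ball inside $\psi(x,t)$ via the second clause of \eqref{SeparatingSpheres}), and a second copy of $d_*$ so that separation upgrades ``$x'\notin \bar\psi(S,\cdot)$'' into ``$\psi(x',\cdot)\cap\bar\psi(S,\cdot)=\emptyset$'' via the first clause of \eqref{SeparatingSpheres}.

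Concretely, given $\omega=(x,t)\in\Omega$ and $S\in\cal{S}$, I would set $s := b_*-2d_*$, which by hypothesis satisfies $s>\log(c_\delta)/\delta$, so that $c_\delta e^{-\delta s}<1$. Applying \eqref{SphereDecaying} to the enlarged formal ball $(x,t+d_*)$ and the set $S$ at relative scale $s$ gives
\be
\nonumber
	\mu\bigl(\psi(x,t+d_*)\cap\bar\psi(S,t+d_*+s)\bigr)\leq c_\delta e^{-\delta s}\,\mu(\psi(x,t+d_*))<\mu(\psi(x,t+d_*)).
\ee
Since $x\in X=\supp(\mu)$ and $\mu$ is locally finite, $\mu(\psi(x,t+d_*))$ is positive, so the displayed inequality produces a point
\be
\nonumber
	x'\in\bigl(\psi(x,t+d_*)\setminus\bar\psi(S,t+d_*+s)\bigr)\subset X.
\ee

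Now I would apply the two clauses of \eqref{SeparatingSpheres} to this $x'$. The first clause, applied at parameter $t+d_*+s$, uses $x'\notin\bar\psi(S,t+d_*+s)$ to conclude
\be
\nonumber
	\psi(x',t+d_*+s+d_*)\cap\bar\psi(S,t+d_*+s+d_*)=\emptyset,
\ee
i.e.\ $\psi(x',t+b_*)\cap\bar\psi(S,t+b_*)=\emptyset$. The second clause, applied with $y=x$ at parameter $t$, uses $x'\in\psi(x,t+d_*)$ to give $\psi(x',t+d_*)\subset\psi(x,t)$, and then monotonicity \eqref{Mono} yields $\psi(x',t+b_*)\subset\psi(x',t+d_*)\subset\psi(x,t)$. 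Combining the two displays produces $\psi(x',t+b_*)\subset\psi(x,t)-\bar\psi(S,t+b_*)$, so $\omega'=(x',t+b_*)$ is the witness required in \eqref{NbDiffuse}.

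The only point that needs a moment of care, rather than being purely mechanical, is the positivity $\mu(\psi(x,t+d_*))>0$, without which the ``pigeonhole'' step that extracts $x'$ is vacuous; this is where the standing assumption $\supp(\mu)=X$ together with local finiteness of $\mu$ is used, and it is the only non-syntactic ingredient in the argument. The rest is a careful bookkeeping of the three scales $d_*,\,s,\,d_*$ so that their sum matches the prescribed $b_*$, and the symmetric roles of the two clauses of \eqref{SeparatingSpheres} fall naturally out of this bookkeeping.
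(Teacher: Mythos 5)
Your proof is correct and follows essentially the same argument the paper sketches (and carries out in detail for the closely related Proposition~\ref{DecayingMeasure}): use the decay estimate to extract a point $x'$ inside $\psi(x,t+d_*)$ lying outside $\bar\psi(S,t+d_*+s)$ with $s=b_*-2d_*$, then apply the two clauses of $d_*$-separation to upgrade the point exclusion into the containment $\psi(x',t+b_*)\subset\psi(x,t)-\bar\psi(S,t+b_*)$. In fact your bookkeeping (applying the decay to $(x,t+d_*)$ rather than the $(x,t+2d_*)$ appearing in the paper's abbreviated sketch) matches exactly the balance of scales used in the paper's full proof of Proposition~\ref{DecayingMeasure} and recovers the stated threshold $b_*>\log(c_\delta)/\delta+2d_*$ cleanly.
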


\begin{proof}
We only sketch the proof since it is very similar to the proof of Proposition \ref{DecayingMeasure} below.
Given a formal ball $\omega=(x,t) \in \Omega$ and $S \in \cal{S}$,
condition \eqref{SphereDecaying} applied to $\omega'=(x, t+ 2d_*)$
implies the existence of a point $x' \in \psi(\omega')  - \bar \psi(S, t + d_* + s)$, for all $s\geq s_0>  \log(c_{\delta})/\delta$.
Hence, \eqref{SeparatingSpheres} shows for the formal ball $\bar \omega=(x', t + 2d_*+s_0) \in \Omega$
that $\psi(\bar \omega)$ is contained in $\psi(\omega)$ and disjoint to $\bar \psi(S, t + 2d_* + s_0)$.
\end{proof}

As a further tool to show that a parameter space satisfies $(b_*)$ or $(b_*, n_*, L_*)$  with respect to a given family $\cal{F}$, 
we want to extend the notion of absolutely decaying measures. 
Let $X$ be the support of a locally finite Borel measure $\mu$. 
Moreover,  let $f : [0, \infty] \times \R \to [0, \infty)$ be a function, non-decreasing in the first and non-increasing in the second argument,
where we denote $f_b(\cdot) \equiv f(b, \cdot)$.
If every resonant set $R_{\lambda}$ is $\bar \psi$-Borel,%
\footnote{ In this case, also $R(r,b) = R(r ) - R(r-b)$ is $\bar \psi$-Borel for every $r\in \R$, $b>0$.}
 we call the family $\cal{F}$
 \emph{measurable} and, for a function $f$ as above, consider the following conditions.

\begin{itemize}
\item[$(\mu^s)$] $(\Omega, \psi, \mu)$ is called  \emph{strongly (absolutely) $f$-decaying with respect to $\cal{F}$},
	if  for all formal balls $\omega=(x,r) \in \Omega$  and for all $s\in \R$ we have
\be
\nonumber
	\mu(\psi(\omega ) \cap  \bar \psi(R(r ), r+s) ) \leq f_{\infty}(s) \ \mu(\psi(\omega)).
\ee
\item[$(\mu)$]  $(\Omega, \psi, \mu)$ is called  \emph{(absolutely) $f$-decaying with respect to $\cal{F}$},
	if  for all formal balls $\omega=(x,r) \in \Omega$, for all $b\geq 0$ and $s\in \R$ we have
\be
\label{DecayingMeasure}
	\mu( \psi(\omega) \cap \bar \psi(R(r,b), r+s ) ) \leq f_b(s) \ \mu(\psi(\omega)).
\ee
\end{itemize}
Again, the function $f$ determines the rate of decay of the measure of the relative neighborhood of the resonant set in $\psi(x,r)$.
For constants $n_*\in \N$, and $d_*, L_* \geq 0$, $b_*> 2d_*$, we say that $f$ is \emph{$(d_*, b_*,n_*,L_*)$-decaying}
if there exists a constant $c_0<1$  such that
\be
\label{Decaying}
	f( n_*(b+L_*) + d_*, b - 2d_*) \leq c_0 \ \ \  \text{ for all $b> b_*$,}
\ee
and strongly \emph{$(d_*,b_*)$-decaying} if $f_{\infty} (b_* - 2d_*)\leq c_0$.

\begin{proposition} 
\label{SphereDecayingMeasure}
Let $(\Omega, \psi, \mu)$ be absolutely $(\delta, c_{\delta})$-decaying with respect to $\cal{S}$, 
and $(\Omega,  \psi)$ be $d_*$-separating.
If moreover $\cal{F} = (\Lambda, R_{\lambda}, s_{\lambda})$ is locally contained in $\cal{S}$ for $n_*\in \N$ and $l_*\geq 0$, 
then $(\Omega, \psi, \mu)$ is (absolutely) $f$-decaying with respect to $\cal{F}^* = (\Lambda, R_{\lambda}, s_{\lambda} +  l_* + d_*)$
where $f_{\infty}(s) =  n_* c_{\delta} e^{- \delta s} $ is $(d_*, b_*)$-decaying for $b_*> \log(n_* c_{\delta} e^{-2\delta d_*})/\delta$.
\end{proposition}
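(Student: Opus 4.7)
The plan is to reduce the bound for the shifted resonant family $\cal{F}^*$ to the given absolute $(\delta, c_{\delta})$-decaying bound on the collection $\cal{S}$, exploiting the local-containment hypothesis together with the $d_*$-separating property in order to discard the part of $R(r - l_* - d_*)$ that lies outside a shrunken neighborhood of the center $x$.

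Fix a formal ball $\omega = (x, r) \in \Omega$ and first treat $s \geq 0$ (for $s$ negative the claimed bound $f_\infty(s) \geq 1$ is trivially respected). Because the sizes of $\cal{F}^*$ are $s_{\lambda} + l_* + d_*$, the relevant resonant set at the parameter $r$ satisfies $R^*(r) = R(r - l_* - d_*)$. Applying local containment \eqref{ContainedInSpheres} to the formal ball $(x, r - l_* - d_*) \in \Omega$ produces sets $S_1, \dots, S_{n_*} \in \cal{S}$ with
\[
\bar \psi(x, r - d_*) \cap R(r - l_* - d_*) \subset \bigcup_{i=1}^{n_*} S_i.
\]
I then decompose $R^*(r) = A \cup M$, where $A = R^*(r) \cap \bar\psi(x, r - d_*)$ and $M$ is its complement in $R^*(r)$. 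By $d_*$-separating \eqref{Separating} applied to the ball $(x, r - d_*)$ and the set $M$ (which is disjoint from $\bar\psi(x, r - d_*)$), one obtains $\bar\psi(x, r) \cap \bar\psi(M, r) = \emptyset$, and by monotonicity of $\bar\psi$ this persists: $\bar\psi(x, r) \cap \bar\psi(M, r + s) = \emptyset$ for every $s \geq 0$. On the other hand, $\bar\psi(A, r + s) \subset \bigcup_i \bar\psi(S_i, r + s)$.

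Combining these two observations with subadditivity of $\mu$ and the absolute $(\delta, c_{\delta})$-decaying hypothesis \eqref{SphereDecaying} applied to each $S_i$ yields
\[
\mu\bigl(\psi(\omega) \cap \bar\psi(R^*(r), r + s)\bigr) \leq \sum_{i=1}^{n_*} \mu\bigl(\psi(\omega) \cap \bar\psi(S_i, r + s)\bigr) \leq n_* c_{\delta} e^{-\delta s}\, \mu(\psi(\omega)).
\]
Since the right-hand side is independent of the stratum width $b$, the same bound serves as $f_b(s)$ for every $b \geq 0$ (the intersection being taken against $R^*(r, b) \subset R^*(r)$), which establishes condition $(\mu)$ with the asserted $f_\infty$. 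The decay statement then follows by substituting $s = b_* - 2d_*$ into $f_\infty$ and requiring $n_* c_{\delta}\, e^{-\delta(b_* - 2d_*)} \leq c_0 < 1$, which rearranges into the stated lower bound on $b_*$.

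The one slightly delicate step is the ordering of shifts: I must apply local containment at level $r - l_* - d_*$ (so that the neighborhood in \eqref{ContainedInSpheres} reaches level $r - d_*$) and then invoke $d_*$-separating at the level $r - d_*$ (so that the excluded set $M$ is pushed past $\bar\psi(x, r)$ and stays disjoint when it is further contracted by $s \geq 0$). Once those two scales are aligned, everything reduces mechanically to the $(\delta, c_{\delta})$-decay bound on each $S_i$ and a finite sum, much in the spirit of the argument sketched for Proposition \ref{AbsDecMeasure}.
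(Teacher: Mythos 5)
Your argument is correct and follows essentially the same route the paper intends: it refers the reader to ``the argument of Claim \eqref{ContainedInS}'' from the proof of Theorem~\ref{DiffuseProposition}, which is exactly what you do --- apply local containment at level $r - l_* - d_*$, split $R^*(r)$ into the part meeting $\bar\psi(x, r-d_*)$ and its complement $M$, discard $M$ via the $d_*$-separating property, and sum the $(\delta, c_\delta)$-decay estimate over the $n_*$ sets of $\cal{S}$.

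One point you should not have waved through, however: the final rearrangement does \emph{not} produce the lower bound printed in the proposition. Requiring $f_\infty(b_*-2d_*) = n_* c_\delta\, e^{-\delta(b_*-2d_*)} < 1$ gives
\[
b_* > \frac{\log(n_* c_\delta)}{\delta} + 2d_* = \frac{\log\!\bigl(n_* c_\delta\, e^{\,2\delta d_*}\bigr)}{\delta},
\]
whereas the statement reads $b_* > \log\!\bigl(n_* c_\delta\, e^{-2\delta d_*}\bigr)/\delta = \log(n_* c_\delta)/\delta - 2d_*$, a \emph{weaker} threshold. The exponent in the paper should be $+2\delta d_*$, not $-2\delta d_*$; compare Proposition~\ref{AbsDecMeasure}, whose bound $b_* > \log(c_\delta)/\delta + 2d_*$ exhibits the correct sign. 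You asserted that your inequality ``rearranges into the stated lower bound,'' which means you reproduced the paper's typo rather than detecting it. Also worth noting (though minor, and the paper's own sketch leaves it implicit too): the hypothesis \eqref{SphereDecaying} is only stated for $s \geq 0$, so for $s<0$ the claimed bound $f_\infty(s) \geq 1$ holds only under the tacit assumption $n_* c_\delta \geq 1$.
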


\begin{proof}
Using the argument of Claim \eqref{ContainedInS}, the proof is straight foreward and left to the reader.
\end{proof}

We say that the parameter space $(\Omega, \psi)$ is \emph{$d_*$-separating with respect to $\cal{F}$}, if  
there exists a constant $d_*>0$ such that for all formal neighborhoods $(Y,t) = (R(r, b),t) \in \cal{P}$, 
or formal balls $(Y,t) = (y,t) \in \Omega$ and for all $x \in X$,
\bea
\label{SeparatingF}
	x\not \in \bar \psi(Y,t) &\implies& \psi(x,t + d_*) \cap \bar  \psi(Y, t + d_*) = \emptyset.
	\\ \nonumber
	x\in \psi(y,t+ d_*) &\implies& \psi(x,t+ d_*) \subset  \psi(y,t).
\eea
Clearly, if  $\bar X$ is a proper metric space and every $R_{\lambda}$ is a discrete set,
then the standard function $B_{\sigma}$ is $\log(3)/\sigma$-separating with respect to $\cal{F}$.

\begin{proposition}
\label{DecayingMeasure}
Let $(\Omega, \psi)$ be $d_*$-separating with respect to $\cal{F}$ and  $\mu$ be a locally finite Borel measure  with $X= $ supp$(\mu)$.
If  $(\Omega, \psi, \mu)$  is [strongly] absolutely $f$-decaying with respect to $\cal{F}$ and a function $f$ which
is [$(d_*, b_*)$-decaying] $(d_*, b_*,n_*, L_*)$-decaying,
Then $(\Omega, \psi)$ is [strongly $\bar b_*$-diffuse] $(\bar b_*, n_*, L_*)$-diffuse with respect to $\cal{F}$, where $\bar b_* =  b_* + 2d_*$.
\end{proposition}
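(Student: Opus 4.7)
The plan is to follow and adapt the sketch of Proposition \ref{AbsDecMeasure}: combine the $f$-decaying inequality \eqref{DecayingMeasure} with the two implications of \eqref{SeparatingF} to locate, inside the starting ball $\psi(\omega)$, a slightly deeper formal ball that misses the relevant resonant neighborhood. Fix $\omega=(x,t)\in\Omega$ and, for the weak conclusion, a parameter $b>\bar b_*=b_*+2d_*$. First I would shrink once by $d_*$ to create a $d_*$-margin that the first separating implication can absorb; then apply the $f$-decaying inequality on the shrunken ball to produce a point $x'$ avoiding a slightly enlarged resonant neighborhood; and finally invoke the two implications of \eqref{SeparatingF} to blow $x'$ back up into a formal ball at level $t+b$ that simultaneously sits inside $\psi(\omega)$ and is disjoint from $\bar\psi(R(t,n_*(b+L_*)),t+nb)$ for $n=1$.

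Concretely, I would apply \eqref{DecayingMeasure} to the auxiliary ball $\omega_1=(x,t+d_*)$ with $b_1=n_*(b+L_*)+d_*$ and shift $s=b-2d_*$. Since $R(t)\subset R(t+d_*)$, one verifies $R(t,n_*(b+L_*))\subset R(t+d_*,b_1)$, and the $(d_*,b_*,n_*,L_*)$-decaying hypothesis \eqref{Decaying} (which applies because $b>\bar b_*>b_*$) gives
\[
\mu\bigl(\psi(\omega_1)\cap\bar\psi(R(t+d_*,b_1),t+b-d_*)\bigr)\leq c_0\,\mu(\psi(\omega_1)).
\]
Because $X=\supp(\mu)$ forces $\mu(\psi(\omega_1))>0$ and $c_0<1$, there exists $x'\in\psi(\omega_1)$ with $x'\notin\bar\psi(R(t+d_*,b_1),t+b-d_*)$. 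The second implication of \eqref{SeparatingF} (with $y=x$ at level $t$) yields $\psi(x',t+d_*)\subset\psi(x,t)$, hence $\psi(x',t+b)\subset\psi(\omega)$ by \eqref{Mono}. The first implication of \eqref{SeparatingF}, applied at level $\tau=t+b-d_*$ to the formal neighborhood associated with $R(t+d_*,b_1)$, yields $\psi(x',t+b)\cap\bar\psi(R(t+d_*,b_1),t+b)=\emptyset$; combined with the containment above and \eqref{Mono} this sharpens to $\psi(x',t+b)\cap\bar\psi(R(t,n_*(b+L_*)),t+nb)=\emptyset$ for every $n\geq 1$. Taking $n=1$ exhibits the required $\omega'=(x',t+b)$, proving the $(\bar b_*,n_*,L_*)$-diffuse property. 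The strong statement follows from exactly the same argument, replacing $f_{b_1}$ by $f_\infty$, $R(t+d_*,b_1)$ by $R(t+d_*)$, and \eqref{Decaying} by the strong $(d_*,b_*)$-decaying bound $f_\infty(b_*-2d_*)\leq c_0$.

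The hard part is the bookkeeping between three interlocking scales: the outer level $t$ of $\omega$, the auxiliary level $t+d_*$ at which the second implication of \eqref{SeparatingF} allows the re-expansion back into $\psi(\omega)$, and the level $t+b-d_*$ at which the decaying inequality must strictly beat $1$ so that the first implication of \eqref{SeparatingF} can then propagate disjointness out to level $t+b$. The parameter choices $b_1=n_*(b+L_*)+d_*$ and $s=b-2d_*$ are dictated precisely so that the exponent pair entering \eqref{Decaying} matches these $d_*$-margins exactly; any other choice would break either the decaying estimate or one of the two separating implications.
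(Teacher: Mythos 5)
Your proof is correct and takes essentially the same route as the paper: shrink to $\omega_1=(x,t+d_*)$, bound the measure of $\psi(\omega_1)\cap\bar\psi(R(t+d_*,n_*(b+L_*)+d_*),t+b-d_*)$ via the $(d_*,b_*,n_*,L_*)$-decaying hypothesis (using $R(t,n_*(b+L_*))\subset R(t+d_*,n_*(b+L_*)+d_*)$), extract a point $x'$ in the complement, and then use the two implications of \eqref{SeparatingF} to push containment and disjointness out to level $t+b$, yielding $n(b)=1$. The parameter bookkeeping agrees exactly with the paper's proof.
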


\begin{proof}
Assume that   $(\Omega, \psi, \mu)$ is $f$-decaying with respect to $\cal{F}$ and $f$ is $(d_*,b_*, n_*, L_*)$-decaying.
For $\bar b_* =  b_* + 2d_*$ and $b>\bar b_*$ 
note that $R(r,n_*b ) \subset R(r + d_*, n_*b + d_*)$ and $b - 2d_* \geq b_*$.
Let $\omega=(x,r) \in \Omega$ with $r>r_*$.
We have
\bea
\nonumber
	&&\mu( \psi(x, r + d_* ) \cap \bar \psi(R(r,n_*(b+L_*)), r+ b- d_*))
	\\ \nonumber
	&\leq & 
	\mu( \psi(x, r + d_* ) \cap \bar \psi(R(r + d_*, n_*(b+L_*) + d_* ), r + d_* + ( b  -2 d_*)))
	\\
	\nonumber
	&\leq& 
	 f( n_*(b+L_*)+ d_* , b- 2d_*)   \mu( \psi(x, r + d_*) ).
\eea
Since for $b> \bar b_*= b_* +2d_*$  we have $ f( n_*(b+L_*)+d_*, b- 2d_*)   \leq c_0 <1$ by \eqref{Decaying},
there exists a point $\bar x \in \psi(x, r+ d_*) \cap \bar \psi(R(r,n_*(b+L_*)), r+ b - d_*)^C$.
By \eqref{Separating} and  since $b>d_*$,  
we have for $\omega'=(\bar x, r+ b) \in \Omega$ that $\psi(\omega') \subset \psi(\bar x,  r + d_*) \subset \psi(x, r)$.
Furthermore, \eqref{SeparatingF} implies that $\psi(\bar x, r +b)$ is disjoint from $\bar  \psi(R(r,n_*(b+L_*)), r+ b)$.
This shows that $(\Omega, \psi)$ is $(\bar b_*,n_*, L_*)$-diffuse with respect to $\cal{F}$.

The case when  $(\Omega, \psi, \mu)$ is strongly $f$-decaying follows similarly.
\end{proof}

We say that, given a locally finite Borel measure $\mu$ on $X = $ supp$(\mu)$, $(\Omega, \psi, \mu)$ satisfies a \emph{power law},
if there are parameters $\tau$, $c_1$, $c_2>0$, 
such that for all $\omega= (x,t)\in \Omega$ we have
\be
\nonumber
	c_1 e^{- \tau t } \leq \mu(\psi(x,t)) \leq c_2 e^{- \tau t}.
\ee
Note that $\tau$ might differ from the lower pointwise dimension of $\mu$ at a point and 
that clearly $(\mu1)$ is satisfied.

\begin{theorem}
\label{Dimension}
Let $(\Omega,  \psi)$ be $d_*$-separating with respect to $\cal{F}$ and let $(\Omega, \psi, \mu)$ satisfy a power law.
Assume that either $(\Omega, \psi, \mu)$ is $f$-decaying with respect to $\cal{F}$ where $f$ is $(d_*,b_*, 1, 0)$-decaying
or that $(\Omega, \psi)$ is strongly $b_*$-diffuse with respect to $\cal{F}$.
If moreover (MSG1-2) are satisfied,
then for all nonempty open sets $ U \subset X$, we have
\be
\nonumber
	\text{dim} (\textbf{Bad}(\cal{F}) \cap U) \geq d_{\mu}(U).
\ee
\end{theorem}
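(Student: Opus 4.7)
The plan is to invoke Proposition~\ref{LowerDimension2}, whose conclusion
\[
\dim(\textbf{Bad}(\cal{F}) \cap U) \geq d_\mu(U) + \frac{\log(c)}{\sigma m_* b}
\]
allows one to conclude $\dim \geq d_\mu(U)$ provided its hypotheses can be verified for arbitrarily large $b$ while keeping $c=c(b)$ bounded below and $m_*=m_*(b)$ controlled so that $\log(c)/(\sigma m_* b) \to 0$ as $b \to \infty$.

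First I would establish the weakly winning property. Under the $f$-decaying hypothesis with $f$ being $(d_*,b_*,1,0)$-decaying, Proposition~\ref{DecayingMeasure} gives that $(\Omega,\psi)$ is $(\bar b_*, 1, 0)$-diffuse with respect to $\cal{F}$, where $\bar b_* = b_* + 2 d_*$; Theorem~\ref{Winning} then makes $\textbf{Bad}(\cal{F})$ weakly $(\psi, \bar b_*, b)$-winning via strategy \eqref{InducedStrategy} for every $b > \bar b_*$. Under the strong $b_*$-diffuse hypothesis, Theorem~\ref{Winning} applies directly with strategy \eqref{StrongInducedStrategy}. In both cases (MSG1--2) are hypothesized, and $(\mu 1)$ is immediate from the power law, since $\mu(\psi(\omega)) \geq c_1 e^{-\tau t} > 0$ for every $\omega = (x,t) \in \Omega$.

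The main step is to verify $(\mu 2)$. Fix $b > \bar b_*$ and let $m_* = m_*(b) \in \N$ be as in the strategy. Given $B$'s choice $\omega_k = (x_k, t_k)$, the plan is the following: the power law together with the $d_*$-separating property yields at least $N \gtrsim e^{\tau m_* b}$ pairwise $\mu$-disjoint $\psi$-balls $\psi(y_j, t_k + m_* b)$ with centers $y_j\in \psi(\omega_k)\cap X$, each of $\mu$-measure comparable to $e^{-\tau(t_k + m_* b)}$. In the $f$-decaying case, the $\mu$-measure of the balls meeting a $d_*$-enlargement of the forbidden set $A_k = \bar\psi(R(t_k,m_*b), t_k + l_* m_* b) \cap X$ is bounded, via the $(d_*,b_*,1,0)$-decaying estimate, by $c_0 \mu(\psi(\omega_k))$ with $c_0 < 1$ \emph{independent of $b$}; discarding these balls leaves legal moves whose $\psi$-images cover a fraction at least $(1-c_0)(c_1/c_2)$ of $\mu(\psi(\omega_k))$, giving $c \geq c_* > 0$ independent of $b$. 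In the strongly $b_*$-diffuse case one argues similarly by iterating the strong diffusion at scale $b_*$ inside each packing ball: each yields a legal sub-ball of comparable order of measure, so that $c$ is again bounded below by a positive constant independent of $b$.

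Applying Proposition~\ref{LowerDimension2} then yields $\dim(\textbf{Bad}(\cal{F}) \cap U) \geq d_\mu(U) + \log(c)/(\sigma m_* b)$; since $c \geq c_* > 0$ is independent of $b$ while $m_* b \to \infty$ as $b \to \infty$, the error term vanishes in the limit and the claimed bound follows. The principal obstacle is ensuring that the packing constant $c$ does not degrade with $b$: this is where the interplay between the power law (providing $\sim e^{\tau m_* b}$ available slots) and the decay/diffusion estimate (removing only a uniformly bounded fraction, independent of $b$) is crucial. Technically, one must align the scales $b_*, m_* b, l_* m_* b$ and absorb the $d_*$-buffer coming from $d_*$-separating into the decay estimate, so that the $d_*$-enlargement of $A_k$ still satisfies the $f$-decay bound, and verify that the packing has a density bound that is uniform in $b$.
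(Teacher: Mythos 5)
Your proposal is correct and follows essentially the paper's own route: one checks ($\mu$1) from the power law and ($\mu$2) by pairing the power law against the uniform $(d_*,b_*,1,0)$-decaying bound $c_0<1$ (or the strong diffusion at scale $b_*$), obtaining a ($\mu$2)-constant $c\geq\bar c_0>0$ \emph{independent of $b$}, and then applies Proposition~\ref{LowerDimension2} and lets $b\to\infty$ to annihilate the error term $\log(c)/(\sigma m_* b)$. The paper makes your packing heuristic precise by a greedy iterative selection of points $x^1,\dots,x^N$ (removing a $\psi$-ball and the $d_*$-buffered forbidden neighborhood at each step until the accumulated measure exceeds $\mu(\psi(\omega_k))$), but this is the same argument you describe.
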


\begin{proof}
Let first $(\Omega, \psi, \mu)$ is $f$-decaying  with respect to $\cal{F}$  where $f$ is $(d_*,b_*, 1, 0)$-decaying.
Let $b> \bar b_* = b_*+d_*$ and $\omega_1=(x_1, t_1) \in \Omega$ be the first move of $B$ such that, by (MSG1), 
$\psi(\omega_1) \subset U$.
Let again $m_* \in \N$ with $\tilde b= m_*b\geq t_1$. 
For $k  \geq 1$, 
let $\omega_k = (x_k, t_k)$ be a choice of $B$.
As in the proof of Proposition \ref{DecayingMeasure} (with $n_*=1$, $L_*=0$), 
let $x^1 \in  \psi(x_k, t_k+d_*) \cap\bar \psi(R(t_k , \tilde b ), t_k+  \tilde b - d_* )^C$.
We moreover see that
\bea
\nonumber
	&& \mu(  \psi(x_k, t _k+ d_* )\cap \big( \psi( x^1, t_k + \tilde b - d_*) \cup \bar  \psi(R(t_k+d_*, \tilde b + d_*), t+  \tilde b -  d_*) \big))
	\\ \nonumber
	& \leq& c_2  e^{-\tau(t_k+\tilde b- d_*)}  +  f(\tilde b + d_* , \tilde b - 2d_*) \cdot \mu( \psi(x_k, t_k +d_* )
	\\ \nonumber
	& \leq&  (\tfrac{c_2}{c_1} e^{2\tau d_*} e^{-\tau \tilde b} + c_0)  \mu( \psi(x_k, t_k + d_*) ).
\eea
Since $c_0<1$ and $ \mu( \psi(x_k, t_k + d_*) )>0$, for $\tilde b$ sufficiently large such that $\tfrac{c_2}{c_1}e^{2\tau d_*} e^{-\tau \tilde b} + c_0<1$, 
there exists a point 
\be
\nonumber
	x^2 \in  \psi(x_k, t_k+d_*) \cap\bar  \psi(R(t_k+d_*,\tilde b + d_*), t_k+ \tilde b -  d_*)^C \cap \psi(x^1, t_k+ \tilde b - d_*)^C.
\ee
With the same arguments as above, 
 $\psi(x^2, r+\tilde b)$ is contained in $\psi(x_k,t_k)$ and  disjoint from  both, 
$\psi(x^1, t_k+\tilde b)$ and $\bar \psi(R(t_k + d_*, \tilde b + d_*), t_k+ \tilde b)$.
Iterating this argument until 
\be
\nonumber
	(N+1) \tfrac{c_2}{c_1} e^{2\tau d_*} e^{-\tau \tilde b} + c_0>1 ,
\ee 
we obtain $N$ points $x^1, \dots, x^N$ 
such that $\psi(x^i, t_k+\tilde b) \subset \psi(x_k,t_k)$, $i=1, \dots, N$, are disjoint and also disjoint to $\bar \psi(R(t_k+d_*,\tilde b + d_*), t_k+\tilde b)$.
Moreover, we have
\bea
\nonumber
	\mu \big(\bigcup_{i=1}^N \psi(x^i, t_k+ \tilde b) \big)
	&\geq&  N c_1  e^{-\tau(t_k+ \tilde b)} 
	\\ \nonumber
	&\geq&  \tfrac{(N+1)c_1}{2} e^{-\tau(t_k+ \tilde b)} 
	\\ \nonumber
	&\geq& \tfrac{(1-c_0)c_1^2e^{-2\tau d_*}}{2c_2^2} \mu(\psi(x_k,t_k))
	\equiv \bar c_0 \cdot \mu(\psi(x_k,t_k)),
\eea
Furthermore, each of the formal balls $\omega^i=(x^i, t_{k+1})= (x^i, t_k+\tilde b)  \in \cal{L}_{b}^{\psi}(\omega_k)$
is a  legal move according to the $(\psi, \bar b_*, b)$-winning-strategy of $A$ defined in \eqref{InducedStrategy}. 
This shows ($\mu2$)  for the parameter $b$ with $c = c( b) \geq \bar c_0$ and $m_*$.
Finally, Proposition \ref{LowerDimension2} implies that
\be
\label{Dim}
	\text{dim}( \textbf{Bad}(\cal{F})\cap U) \geq d_{\mu}(U) + \frac{ \log(\bar c_0)}{\sigma m_* b},
\ee
and the proof follows since \eqref{Dim} is true for every $b> \bar b_*$.

If $(\Omega, \psi)$ is strongly $b_*$-diffuse with respect to $\cal{F}$, there exists $\psi(\bar x, t+b_*) \subset \psi(\omega) - \bar  \psi(R(t), t+nb_*)$.
With similar arguments, we can choose disjoint formal balls $\psi(x_i, t+b)$, $i=1, \dots, N$, contained in $\psi(\bar x, t+b_*)$,
where $N$ is such that $(N+1) \tfrac{c_2}{c_1} e^{2\tau d_*} e^{-\tau b}> 1 $ and each of the formal balls is a legal move  according the to the $(\psi, b_*, b)$-winning strategy of $A$.
The proof then follows similarly.
\end{proof}

\begin{remark}
If we modify the requirements and the proof of Theorem \ref{Dimension} with respect to finitely many families $\cal{F}_i$, $i=1, \dots, n_*$,
where in particular $(\Omega, \psi, \mu)$ is $f_i$-decaying with respect to $\cal{F}_i$, $f_i$ is $(d_*, b_*, n_*, 0)$-decaying,
then we can show the result for $\textbf{Bad}(\cal{F}) $ replaced by $\cap_{i=1}^{n_*} \textbf{Bad}(\cal{F}_i)$. 
Moreover, if actually $X=F_i(Z)$ for bijective maps $F_i : \bar Z \to \bar X$ satisfying \eqref{Lipschitz} 
and $\cal{F}_i = F_i( \cal{F}_Z^i)$ with families $ \cal{F}_Z^i$ in $\bar Z$,
we obtained that
\be
\nonumber
	\text{dim} (\cap_{i=1}^{n_*} F(\textbf{Bad}(\cal{F}_Z^i)) \cap U) \geq d_{\mu}(U),
\ee
for any nonempty open set $U \subset X$.
This is a weaker version of the property that winning sets for Schmidt's game are incompressible; compare with \eqref{WeaklyIncompressible}. 
\end{remark}


\section{Applications}
\label{Apps}

\noindent
In order to discuss our conditions, we consider several examples from metric number theory (Part I.) and from dynamical systems (Part II.).
Given a complete metric space $X$ in $\bar X$ with a monotonic function $\bar \psi$  on $\bar \Omega$,
we are left with defining a suitable nested discrete family of resonant sets $\cal{F}$,
verifying the Conditions $(b_*)$ or $(b_*,n_*, L_*)$ respectively as well as finding suitable measures for the purpose of determining the Hausdorff-dimension of \textbf{Bad}$_X^{\bar \psi}(\cal{F})$.
\\

\paragraph{I. Examples from Number Theory.}

\subsection{\textbf{Bad}$_{\R^n}^{\bar r}$}
\label{BadRN}
For $n\geq 1$, let $\bar r \in \R^n$ with $r^1,\dots, r^n \geq 0$ such that $\sum r^i =1$.
Let \textbf{Bad}$_{\R^n}^{\bar r}$ be the set of points $\bar x = (x_1,\dots ,x_n)\in \R^n$ 
for which there exists a positive constant $c(\bar x)>0$ such that
\be
\nonumber
\label{Bedinung}
 	\max_{i=1, \dots, n} \lvert q x_i - p_i \rvert^{1/r^i}  \geq c(\bar x)/q,
\ee
for every $q\in \N$ and $\bar p = (p_1,\dots, p_n) \in \Z^n$.
The set \textbf{Bad}$_{\R^n}^{n} \equiv \textbf{Bad}_{\R^n}^{(1/n,\dots ,1/n)}$ agrees with the set of badly approximable vectors.

Let $\bar \Omega= \R^n \times (0, \infty)$ and define the monotonic function $\bar \psi = \bar \psi_{\bar r}$ by
\be
\nonumber
	\bar \psi( \bar x, t  ) \equiv 
	B(x_1, e^{-(1+r^1)t} )\times \dots \times B(x_n, e^{-(1+r^n)t} ).
\ee
While \cite{BroderickEtAl} showed that \textbf{Bad}$_{\R^n}^{n} \cap X$, where $X$ is the support of an absolutely decaying measure, is hyperplane absolute winning,
\cite{KleinbockWeiss} showed that \textbf{Bad}$_{\R^n}^{\bar r}$ is a winning set for the $\psi$-modified game. 
We want to combine these results and improve them to the following, 
where we set $\textbf{Bad}_{X}^{\bar r} \equiv \textbf{Bad}_{\R^n}^{\bar r}\cap X$ and let $\cal{S}$ be the collection of affine hyperplanes in $\R^n$.

\begin{theorem}
\label{BadN}
Let $X$ be the support of a locally finite Borel measure $\mu$ such that $(\Omega, \psi, \mu)$  is absolutely $(\delta, c_{\delta})$-decaying with respect to  $\cal{S}$.
Then, $\textbf{Bad}_{X}^{\bar r}$ is absolute $\psi$-winning with respect to $\cal{S}$.
\end{theorem}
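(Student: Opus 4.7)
The plan is to realize $\textbf{Bad}_{\R^n}^{\bar r}$ as $\textbf{Bad}_X^{\bar\psi}(\cal{F})$ for an appropriate family $\cal{F}$ of rational resonant sets and then to apply the abstract machinery of Subsection \ref{DiffuseSpaces}. Take $\Lambda = \N_{\geq 1}$, let
\[
  R_k \equiv \{ \bar p/q : \bar p \in \Z^n,\ 1 \leq q \leq k\} \subset \R^n,
\]
and assign a size $s_k = \log(k) + C_n$, where $C_n$ is a constant (depending only on $n$ and $\bar r$) chosen so that, after unfolding the definition of $\bar\psi_{\bar r}$, the condition $\bar x \notin \bigcup_{\lambda} \psi_\lambda(c)$ is equivalent to $\max_i |qx_i - p_i|^{1/r^i} \geq c'(\bar x)/q$ for all $\bar p/q$. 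Then $\cal{F} = (\Lambda, R_k, s_k)$ is nested and discrete, and $\textbf{Bad}_{\R^n}^{\bar r} \cap X = \textbf{Bad}_X^{\bar\psi}(\cal{F})$.

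Next I would verify that $\cal{F}$ is locally contained in the collection $\cal{S}$ of affine hyperplanes of $\R^n$ with $n_*=1$ by means of a weighted Simplex Lemma. Concretely, given a formal ball $\omega = (\bar x, t) \in \Omega$ and $k$ with $s_k \leq t$, assume toward contradiction that $\bar\psi(\omega) \cap R_k$ contains $n+1$ affinely independent points $\bar p^{(j)}/q^{(j)}$, $j=0,\dots,n$. The volume of the simplex they span is at least $1/(n!\, q^{(0)}\cdots q^{(n)}) \geq 1/(n!\, k^{n+1})$, while it is bounded above by a constant times the product $\prod_{i=1}^n e^{-(1+r^i)t} = e^{-(n+1)t}$, since the $i$-th coordinate differences are controlled by $2e^{-(1+r^i)t}$. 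Choosing $C_n$ large enough to absorb $n!$ and the Hadamard-type constant yields a contradiction once $t \geq s_k$, so $\bar\psi(\omega) \cap R_k$ lies on a single affine hyperplane $S \in \cal{S}$.

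It remains to transfer the decay hypothesis into a diffuseness property. The weighted $\psi$ is easily seen to be $d_*$-separating with respect to $\cal{S}$ (in the sense of \eqref{SeparatingSpheres}) for some constant $d_* = d_*(\bar r)$, because each factor $B(x_i, e^{-(1+r^i)t})$ is $\log(3)/(1+r^i)$-separating and affine hyperplanes split into a half-space condition on coordinates. Combined with the hypothesis that $(\Omega,\psi,\mu)$ is absolutely $(\delta, c_\delta)$-decaying with respect to $\cal{S}$, Proposition \ref{AbsDecMeasure} yields a constant $b_*>0$ such that $(\Omega,\psi)$ is $b_*$-diffuse with respect to $\cal{S}$. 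Applying Theorem \ref{DiffuseProposition} with the family $\cal{F}$ above (for which $n_*=1$ is now established) then gives that $\textbf{Bad}_X^{\bar r}$ is strongly $\bar b_*$-diffuse with respect to $\cal{F}$ and, most importantly, absolute $(\psi, \bar b_*)$-winning with respect to $\cal{S}$.

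The main obstacle is the weighted Simplex Lemma in Step 2, specifically choosing the additive constant $C_n$ in $s_k$ so that the upper bound on the simplex volume — which involves the anisotropic product $\prod_i e^{-(1+r^i)t}$ rather than a symmetric power — still beats $1/(n!\, k^{n+1})$; this uses crucially that $\sum_i (1+r^i) = n+1$. The separating computation for $\cal{S}$ is similarly anisotropic: because a hyperplane $\bar\psi$-neighborhood is no longer a simple slab in the product metric, one has to verify \eqref{SeparatingSpheres} coordinate-wise, and this is where the definition of $\bar\psi$ as a product of one-dimensional balls (rather than, say, Euclidean balls in a skew metric) becomes essential.
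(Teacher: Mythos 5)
Your proposal is correct and follows essentially the same route as the paper's proof: realize $\textbf{Bad}_{X}^{\bar r}$ as $\textbf{Bad}_X^{\bar\psi}(\cal{F})$ for the family of rational vectors with denominators below a threshold, show $\cal{F}$ is locally contained in the collection $\cal{S}$ of affine hyperplanes via the Simplex Lemma (the identity $\sum_i(1+r^i)=n+1$ makes the anisotropic box have Euclidean volume $2^n e^{-(n+1)t}$, so the argument reduces to the unweighted one), verify $d_*$-separation with respect to $\cal{S}$, invoke Proposition \ref{AbsDecMeasure} to get $b_*$-diffuseness with respect to $\cal{S}$ from the absolutely decaying hypothesis, and finish with Theorem \ref{DiffuseProposition}.

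The only differences are cosmetic. The paper sets $s_k = \log(k)$ and absorbs the constant $\log(n!\,2^n)$ into the containment parameter $l_*$, whereas you fold a constant $C_n$ into the sizes; either bookkeeping works and both rely on the Simplex Lemma (the paper cites \cite{KristensenEtAl}, Lemma 4, and you sketch the same volume argument inline). Two small points to tighten: the upper bound on the simplex volume needs no Hadamard-type inequality — a simplex inside a box of side lengths $2e^{-(1+r^i)t}$ has volume at most $2^n e^{-(n+1)t}$ simply by containment; and your reduction of the separating constant to $\log(3)/(1+\min_i r^i)$ is correct but does not come from a "half-space condition on coordinates" — it is a coordinate-wise triangle-inequality estimate $|x_i - s_i| \le 2e^{-(1+r^i)(t+d_*)} \le e^{-(1+r^i)t}$, which is exactly what the paper uses. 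Finally, you should record explicitly the final unfolding that $\textbf{Bad}(\cal{F}) \subset \textbf{Bad}_X^{\bar r}$, which the paper spells out at the end of its proof; you only assert it.
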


\noindent 
Before we proof the Theorem, let $\mu$ be the Lebesgue-measure on $\R^n$.
Note that $(\Omega, \psi, \mu)$ is absolutely $(\delta, c_{\delta})$-decaying with respect to $\cal{S}$, for $\delta = 1+ \min \{r^1,\dots, r^n\}$ and $c_{\delta}>0$.
Moreover, $(\Omega, \psi, \mu)$ satisfies a power law with respect to the exponent $n+1$;
in fact, for all $(x, t)\in \Omega$ we have
	 $\mu(\psi(x,t)) = 2^n e^{- (n+1)t}$.

More precisely, let $\mu_i$ on $X_i\subset \R$ such that $(\Omega_i, B_{\sigma_i}, \mu_i)$ satisfies a power law with respect to the exponent $\tau_i$, $i=1, \dots, n$.
In particular, $(\Omega_i, B_{\sigma_i}, \mu_i)$ is absolutely $\tau_i$-decaying and $( (\times_{i=1}^n X_i) \times \R^+, \times_{i=1}^n B_{\sigma_i},\times_{i=1}^n \mu_i)$ satisfies a power law with respect to the exponent $\tau = \sum_{i=1}^n \tau_i$.
Moreover, using the arguments of \cite{LindenstraussEtAl}, Lemma 9.1, the following Lemma can be shown. 

\begin{lemma}
\label{ProductDecaying}
Assume that $(X_i \times \R^+, \psi_i, \mu_i)$, $X_i \subset \R^{n_i}$, is absolutely $(\delta_i, c_{\delta_i)}$-decaying with respect to 
affine hyperplanes in $\R^{n_i}$, $i=1,2$.
Then $(X_1 \times X_2 \times \R^+, \psi_1 \times \psi_2, \mu_1 \times \mu_2)$ is absolutely $(\delta, c_{\delta})$-decaying with respect to $\cal{S}$
for $\delta = \min\{\delta_i\}$, $c_{\delta} =\max\{c_{\delta_i}\}$.
\end{lemma}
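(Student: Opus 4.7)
The plan is to mimic the slicing argument used by Lindenstrauss et al.\ (Lemma~9.1 in \cite{LindenstraussEtAl}), adapted to the parameter-space formulation. Fix a formal ball $\omega = ((x_1, x_2), t) \in \Omega_1 \times \Omega_2$ and an affine hyperplane $H \subset \R^{n_1 + n_2}$ with unit normal $\nu = (\nu_1, \nu_2) \in \R^{n_1} \times \R^{n_2}$. By symmetry I may assume $|\nu_1|^2 \geq 1/2$, so that $|\nu_1| \geq 1/\sqrt{2}$; the complementary regime $|\nu_2| \geq 1/\sqrt{2}$ is handled by interchanging the roles of the two factors. The goal is to bound $(\mu_1 \times \mu_2)\bigl((\psi_1 \times \psi_2)(\omega) \cap \bar\psi(H, t+s)\bigr)$ by $c_\delta e^{-\delta s}\,(\mu_1 \times \mu_2)((\psi_1 \times \psi_2)(\omega))$ with $\delta = \min\{\delta_1, \delta_2\}$.

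The first step is to identify the geometry of the horizontal slices. For each $z \in \psi_2(x_2, t)$ I consider
\[
\Sigma_z \equiv \{y \in \R^{n_1} : (y,z) \in \bar\psi(H, t+s)\}.
\]
Since $H$ has normal component $|\nu_1| \geq 1/\sqrt{2}$ in the first factor, an elementary projection argument shows that $\Sigma_z$ is contained in the $\bar\psi_1$-neighborhood of an affine hyperplane $H_z \subset \R^{n_1}$ at a shifted parameter $t + s - \gamma$, where $\gamma \geq 0$ is a constant depending only on $|\nu_1|/|\nu_2|$ and the relative exponential scales of $\psi_1$ and $\psi_2$. Intuitively, a point whose distance to $H$ in the product sense is at most the product radius must have its $\R^{n_1}$-coordinate within a slightly enlarged flat neighborhood of the slice $H_z$.

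The second step is to apply the absolute decay hypothesis for $(\Omega_1, \psi_1, \mu_1)$ on each slice: for every $z$,
\[
\mu_1(\psi_1(x_1, t) \cap \Sigma_z) \leq \mu_1\bigl(\psi_1(x_1,t) \cap \bar\psi_1(H_z, t+s-\gamma)\bigr) \leq c_{\delta_1} e^{\delta_1 \gamma} e^{-\delta_1 s} \mu_1(\psi_1(x_1,t)).
\]
Integrating over $z \in \psi_2(x_2,t)$ via Fubini yields
\[
(\mu_1 \times \mu_2)\bigl((\psi_1 \times \psi_2)(\omega) \cap \bar\psi(H, t+s)\bigr) \leq c_{\delta_1} e^{\delta_1 \gamma} e^{-\delta_1 s}\, (\mu_1 \times \mu_2)((\psi_1 \times \psi_2)(\omega)).
\]
In the symmetric regime $|\nu_2| \geq 1/\sqrt{2}$ the analogous bound with $\delta_2$, $c_{\delta_2}$ holds, and combining the two cases gives $e^{-\delta s}$ with $\delta = \min\{\delta_1, \delta_2\}$.

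The main obstacle, and where I would need to be most careful, is tracking the constant $\gamma$ and justifying that the announced value $c_\delta = \max\{c_{\delta_1}, c_{\delta_2}\}$ is genuinely attainable without additional universal factors. If $\psi_1$ and $\psi_2$ live on different exponential scales, the comparison between a product $\bar\psi$-neighborhood of $H$ and a true $\bar\psi_1$-neighborhood of the sliced hyperplane $H_z$ introduces correction factors; these should be absorbed either into a redefined $c_\delta$ or into a harmless additive shift of $s$ which disappears once $s$ is large. A secondary technicality is that $\Sigma_z$ is empty for $z$ outside the projection of $\bar\psi(H, t+s)$ onto $X_2$, but this only improves the Fubini bound and requires no further argument.
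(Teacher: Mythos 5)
Your argument is the paper's own slicing argument with the two factors transposed: the paper disintegrates along the first coordinate and applies the $\mu_2$-decay hypothesis on the second-factor fibres $\psi_2(y_0,t)\cap\bar\psi_2(S_x,t+s)$, whereas you disintegrate along the second coordinate and apply $\mu_1$-decay, but the Fubini-plus-slicing mechanism is identical. You are in fact a bit more careful than the paper's sketch on exactly the point you flag: the paper's displayed identity $\psi(\omega)\cap\bar\psi(S,t+s)=\bigcup_{x}\{x\}\times\bigl(\psi_2(y_0,t)\cap\bar\psi_2(S_x,t+s)\bigr)$ is not an equality, since the $x$-fibre of $\bar\psi(S,t+s)$ also picks up contributions from $S_u$ for all $u$ in a $\psi_1(\cdot,t+s)$-sized box around $x$, and when $\psi_1$ and $\psi_2$ decay at different exponential rates the resulting shift $\gamma$ can be kept bounded only if one slices along the factor whose $\psi_i$ decays faster and applies the decay hypothesis to the other one. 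So the correct dichotomy should involve the relative scales of $\psi_1,\psi_2$ in addition to the components of the normal; your criterion $|\nu_1|\geq 1/\sqrt{2}$ on its own does not force $\gamma$ to be bounded, and neither does the paper's ``interchange indices so that $S_x$ is a hyperplane.'' Since the Lemma is only ever invoked in the applications with an unspecified constant, this looseness is harmless, but you were right to doubt that $c_\delta=\max\{c_{\delta_i}\}$ is attainable literally without an extra universal factor.
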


\begin{proof}[Sketch of the proof]
Given the box $\psi((x_0,y_0), t) = \psi_1(x_0,t) \times \psi_2(y_0,t)$ and an affine hyperplane $S$ in $\R^{n_1 +n_2}$, 
we may, up to interchanging the role of the indices, assume that each slice $S_x \equiv S \cap   \{x\} \times \R^{n_2}$ is an affine hyperplane in $\R^{n_2}$.
Hence, write  
\be
\nonumber
	\psi((x_0,y_0), t) \cap \bar \psi(S, t + s) = \bigcup_{x \in \psi_1(x_0,t)} \{x\} \times \big( \psi_2(y_0, t) \cap \bar \psi_2(S_x, t+ s)  \big).
\ee
Disintegrating into the slices parallel to $\R^{n_2}$ and using that $\mu_2$ is absolutely $(\delta_2, c_{\delta_2})$-decaying, we obtain
\bea
\nonumber
	\mu(\psi((x_0,y_0), t) \cap \bar \psi(S, t + s) )
		&\leq& \mu_1( \psi(x_0,t)) \cdot c_{\delta_2}e^{- \delta_2 s} \mu_2( \psi_2(y_0, t))
	\\ \nonumber
		&\leq& c_{\delta} e^{-\delta s} \mu_1( \psi(x_0,t))\mu_2( \psi(y_0,t)) = c_{\delta} e^{- \delta s}\mu(\psi((x_0,y_0), t) ),
\eea
showing the claim. 
\end{proof}

So let $X$ be a product space as above, and note  that conditions (MSG1-2) are satisfied.  
By Theorem \ref{Dimension} (which we will see is applicable),
for any nonempty open set $U \subset X$, we have 
\be
\nonumber
	 \text{dim}( \textbf{Bad}_{X}^{\bar r} \cap U) \geq d_{\mu}(U);
\ee
this strengthens \cite{KristensenEtAl}, Theorem 11.

\begin{proof}[Proof of Theorem \ref{BadN}]
For $k\in \Lambda \equiv \N_{\geq 2}$ we define the set of rational vectors 
\be
\nonumber
	R_{k} \equiv \{ \bar p/q : \bar p \in \Z^n, 0<q<k \}
\ee
as resonant set and define its size by $s_k \equiv \log(k)$. 
The family $\cal{F}=( \N_{\geq 2}, R_k, s_k)$ is nested and discrete
and we want to show that $(\Omega, \psi)$ is strongly $b_*$-diffuse with respect to $\cal{F}$.

We use the following version of the 'Simplex Lemma' due to Davenport and Schmidt.

\begin{lemma}[\cite{KristensenEtAl}, Lemma 4]
\label{SimplexLemma}
Let $D\subset \R^n$ be a box of Euclidean volume  vol$(D) < 1/(n! k^{n+1} )$.
Then, there exists an affine hyperplane $L$ such that $R_k \cap D \subset L$.
\end{lemma}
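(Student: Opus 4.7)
The plan is to prove the Simplex Lemma by contradiction: assume $R_k \cap D$ is \emph{not} contained in any affine hyperplane and derive a lower bound on $\text{vol}(D)$ that contradicts the hypothesis. Concretely, if $R_k \cap D$ is in no affine hyperplane, then it must contain at least $n+1$ points $\bar p_0/q_0, \ldots, \bar p_n/q_n$ (with $\bar p_i \in \Z^n$ and $0 < q_i < k$) which are in general position, i.e., the $n$-simplex $\Sigma$ they span is non-degenerate.

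The main step is to lower-bound $\text{vol}(\Sigma)$ using integrality. The volume of the simplex can be written as
\[
\text{vol}(\Sigma) \;=\; \frac{1}{n!} \left| \det \begin{pmatrix} 1 & \bar p_0^T/q_0 \\ \vdots & \vdots \\ 1 & \bar p_n^T/q_n \end{pmatrix} \right|.
\]
Multiplying the $i$-th row by $q_i$ (which scales the determinant by $q_0 \cdots q_n$) yields an $(n+1) \times (n+1)$ matrix $M$ with integer entries whose first column is $(q_0, \ldots, q_n)^T$ and whose remaining block is the matrix of the $\bar p_i$. By the general position hypothesis $M$ is non-singular, so $|\det M| \ge 1$. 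Hence
\[
\text{vol}(\Sigma) \;=\; \frac{|\det M|}{n! \, q_0 q_1 \cdots q_n} \;\ge\; \frac{1}{n! \, q_0 q_1 \cdots q_n} \;>\; \frac{1}{n! \, k^{n+1}},
\]
where the last inequality uses $q_i < k$ strictly for each $i$.

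On the other hand, since $D$ is convex (being a box) and contains the vertices $\bar p_i/q_i$, it contains their convex hull $\Sigma$. Therefore $\text{vol}(D) \ge \text{vol}(\Sigma) > 1/(n!\,k^{n+1})$, contradicting the hypothesis $\text{vol}(D) < 1/(n!\,k^{n+1})$. I would expect the only subtle point to be the algebraic identification of the simplex volume with $|\det M|/(n!\,q_0 \cdots q_n)$, which is a standard row-reduction argument (factoring $q_i$ out of the $i$-th row and using the usual formula $\text{vol}(\Sigma) = \frac{1}{n!}|\det(v_1-v_0, \ldots, v_n-v_0)|$ together with expansion along the first column). No deeper Diophantine input is required beyond the integrality of the $(\bar p_i, q_i)$.
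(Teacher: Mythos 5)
Your proof is correct and follows essentially the same route as the paper: assume $n+1$ points of $R_k\cap D$ in general position, bound the simplex volume from below by $1/(n!\,q_0\cdots q_n)>1/(n!\,k^{n+1})$ via the determinant formula, and contradict the volume hypothesis using convexity of $D$. The only difference is that you spell out the integrality step ($|\det M|\geq 1$ after clearing denominators), which the paper leaves implicit.
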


\noindent Assuming the lemma for the moment, choose any resonant set $R_k$
and let $\omega= (x, r) \in \Omega$ be a formal ball such that $s_k\leq r$.
Note that, for $l_*> \log(n!\cdot 2^n)$, $\bar \psi(x, r+ l_*)$ is a box of Euclidean volume
\be
\nonumber
	 2e^{-(1+r^1)(r + l_*)} \cdots 2e^{-(1+r^n)(r + l_*)}= 2^n e^{-(1+n) (r+l_*)} <  \tfrac{1}{ n! k^{n+1}}.
\ee
The Simplex Lemma implies that $\bar \psi(x, r + l_*) \cap R_k \subset  L$, where $L \in \cal{S}$,
which shows that $\cal{F} $ is locally contained in $\cal{S}$ for $n_*=1$.

It is readily checked that $(\Omega, \psi)$ is $d_*$-separating  as well as $d_*$-separating with respect to $\cal{S}$, for $d_*=\log(3)/(1+\min\{r^i\})$.
Since $(\Omega, \psi, \mu)$ is $(\delta, c_{\delta})$-decaying, Proposition \ref{AbsDecMeasure} implies that $(\Omega, \psi)$ is $b_*$-diffuse with respect to $\cal{S}$, for $b_* > 2d_* + \log(c_{\delta}) /\delta$.
Thus, Theorem \ref{DiffuseProposition} shows that $(\Omega, \psi)$ is strongly $\bar b_*$-diffuse with respect to $\cal{F}$ where $\bar b_*=l_*+ d_* + b_*$
and that $\textbf{Bad}(\cal{F})$ is an absolute  $(\psi, \bar b_*)$-winning set with respect to $\cal{S}$.%

Finally, if $\bar x \in $ \textbf{Bad}$_X^{\psi}(\cal{F})$,
there exists a constant $c= c(\bar x)< \infty$ such that for all $\bar p/q$, where $\bar p=(p_1,\dots ,p_n) \in \Z^n$ and  $q\in \N$,
\be
\nonumber
	\bar x \not \in \bar \psi(R_{q+1}, s_{q+1} + c) \supset \bar  \psi(\bar p/q, s_{q+1} + c  ).
\ee
Hence, for some $i\in\{1, \dots, n\}$, we have
\be
\nonumber
	\lvert x_i -  p_i/q \rvert \geq e^{-(1+r^i)(s_{q+1} +c)} \geq \tfrac{e^{-(1+ \max\{r^i\}) c}}{2^{2n+2} n!}q^{-(1+r^i)},
\ee
and we see that \textbf{Bad}$(\cal{F}) \subset $ \textbf{Bad}$_{X}^{\bar r}$.
Remarking that a supset of a winning set is also a winning set finishes the proof.
\end{proof}

Although the Simplex Lemma is folklore,
we want to give the proof of \cite{KristensenEtAl} for the sake of completeness.

\begin{proof}[Proof of the Simplex Lemma \ref{SimplexLemma}]
Let $D\subset \R^n$ be a convex subset of volume less than $1/(n! k^{n+1})$.
Assume by contradiction that there are $n+1$ rational vectors $\bar p_i/q_i \in D \cap R_k$ which are not contained in an affine hyperplane.
These vectors span a simplex $S$ which is contained in $D$ by convexity of $D$.
Moreover, the volume of $S$, vol$(S)\neq 0$, is given by 
\be
\nonumber
	\text{vol}(D) \geq \text{vol}(S) =  \frac{1}{n!} \lvert \det( 
	\begin{pmatrix} 
			1 & \bar p_1^T/q_1 \\
			1 & \bar p_2^T/q_2 \\
			1 & \bar p_3^T/q_3 
	\end{pmatrix}) \rvert \geq \frac{1}{n!}\frac{1}{q_1 \cdots q_{n+1}}> \frac{1}{n!k^{n+1}},
\ee
which is a contradiction and finishes the proof.  
\end{proof}


\subsection{\textbf{Bad}$_{\C^n}^{\bar r}$}
\label{BadCn}
Let $\Z[i]$ be the ring of Gaussian integers in $\C$.
For $n\geq 1$, let again $\bar r \in \R^n$ with $r^1,\dots, r^n \geq 0$ such that $\sum r^i =1$.
Denote by  \textbf{Bad}$_{\C^n}^{\bar r}$  the set of points $\bar x = (x_1,\dots ,x_n)\in \C^n$ 
for which there  is a positive constant $c(\bar x)>0$
such that 
\be
\nonumber
 	\max_{i=1, \dots, n} \lvert  qx_i - z_i \rvert^{1/r^i}  \geq c(\bar x) \cdot  \lvert q\rvert^{-1},
\ee
for every $z_1, ..., z_n, q \in \Z[i]$, $q \neq 0$. 

Let $\Omega= \C^n \times (0, \infty)$ and define the monotonic function $ \psi =  \psi_{\bar r}$ by the box
\be
\nonumber
	\psi( \bar x, t  ) \equiv B(x_1, e^{-(1+r^1)t} )\times \dots \times B(x_n, e^{-(1+r^n)t} ).
\ee

Note that \cite{DodsonKristensen} showed \textbf{Bad}$_{\C^2}^{(1/n, \dots, 1/n)}$ to be a winning set for Schmidt's game.
We want to show the following stronger result.

\begin{theorem}
\textbf{Bad}$_{\C^2}^{\bar r}$ is absolute $\psi$-winning with respect to the collection $\cal{S}$ of  complex lines.
\end{theorem}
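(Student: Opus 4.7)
The plan is to adapt the proof of Theorem~\ref{BadN} directly to $\C^2 \cong \R^4$. First I would define
\[
R_k \equiv \{\bar z/q : \bar z \in \Z[i]^2,\ 0 < |q| < k\}, \qquad s_k \equiv \log k,
\]
for $k \in \N_{\geq 2}$, producing a nested and discrete family $\cal{F} \equiv (\N_{\geq 2}, R_k, s_k)$. The same unraveling of the contraction $\psi_\lambda(c)$ as in the proof of Theorem~\ref{BadN} shows that $\textbf{Bad}(\cal{F}) \subset \textbf{Bad}_{\C^2}^{\bar r}$, so since any superset of a winning set is winning for the same strategy, it suffices to prove that $\textbf{Bad}(\cal{F})$ is absolute $\psi$-winning with respect to $\cal{S}$.

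The first key step is a complex analog of the Simplex Lemma~\ref{SimplexLemma}, which will ensure that $\cal{F}$ is locally contained in $\cal{S}$ with $n_* = 1$. Suppose three rational points $\bar z_j/q_j \in R_k$ (with $\bar z_j \in \Z[i]^2$ and $0 < |q_j| < k$, $j=1,2,3$) lie in the box $\psi(x, t+l_*)$ but not on a common complex affine line. The $2 \times 2$ determinant
\[
\Delta \equiv \det \begin{pmatrix} z_2^{(1)}/q_2 - z_1^{(1)}/q_1 & z_2^{(2)}/q_2 - z_1^{(2)}/q_1 \\ z_3^{(1)}/q_3 - z_1^{(1)}/q_1 & z_3^{(2)}/q_3 - z_1^{(2)}/q_1 \end{pmatrix}
\]
is then a nonzero Gaussian rational whose denominator divides $q_1 q_2 q_3$. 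Since any nonzero Gaussian integer has modulus at least $1$, this forces $|\Delta| \geq 1/|q_1 q_2 q_3| \geq k^{-3}$, while the triangle inequality yields $|\Delta| \leq 8\, e^{-(1+r^1)(t+l_*)}\, e^{-(1+r^2)(t+l_*)} = 8\, e^{-3(t+l_*)}$, using $r^1 + r^2 = 1$. Combined with $s_k \leq t$, this is a contradiction whenever $l_* > \log 2$, and so the three points must lie on a common complex line.

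Next I would verify the diffusion hypothesis by choosing $\mu$ equal to Lebesgue measure on $\C^2 \cong \R^4$. A complex line has real codimension two in $\R^4$, and a disintegration argument analogous to Lemma~\ref{ProductDecaying} (slicing parallel to one of the two complex coordinates) shows that $(\Omega, \psi, \mu)$ is absolutely $(\delta, c_\delta)$-decaying with respect to $\cal{S}$ for some $\delta>0$ and $c_\delta>0$. Both the $d_*$-separating and $d_*$-separating-with-respect-to-$\cal{S}$ properties are straightforward for $d_* = \log(3)/(1 + \min\{r^1, r^2\})$, since $\psi$ is a product of Euclidean balls. Proposition~\ref{AbsDecMeasure} then yields $b_*$-diffusion with respect to $\cal{S}$, and Theorem~\ref{DiffuseProposition} finally delivers that $\textbf{Bad}(\cal{F})$ is absolute $\psi$-winning with respect to $\cal{S}$.

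The main obstacle is the complex Simplex Lemma: one has to carry out the determinant computation over $\C$ rather than $\R$ and exploit the sharp lower bound that every nonzero Gaussian integer has modulus at least one. Once this ingredient is in place, the remainder of the argument runs in direct analogy with the real case and requires no further new ideas.
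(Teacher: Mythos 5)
Your proposal is correct and follows essentially the same route as the paper: same resonant sets $R_k$ with $s_k = \log k$, same reduction to a complex Simplex Lemma establishing that $\cal{F}$ is locally contained in the collection of complex lines, and the same invocation of absolute decay, $d_*$-separation, and Theorem~\ref{DiffuseProposition}. The only difference is that you supply a self-contained proof of the complex Simplex Lemma via the Gaussian-integer determinant bound, whereas the paper merely cites it as implicit in \cite{KristensenEtAl}, Theorem 17; your argument is correct (the $3\times 3$ row-clearing shows the determinant is a Gaussian integer divided by $q_1 q_2 q_3$, and the modulus bound $\geq 1$ for nonzero Gaussian integers gives the required lower bound).
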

	
\noindent 
By Theorem \ref{Dimension} (which we will see is applicable),
for any nonempty open set $U \subset  \C^2$, we have 
for the Lebesgue measure $\mu$  on $\C^2$,
\be
\label{DimensionComplex}
	 \text{dim}( \textbf{Bad}_{\C^2}^{\bar r} \cap U) \geq d_{\mu}(U).
\ee

Let $\mu_i$ satisfy a power law on $X_i \subset \C$, $i= 1, 2$, and set $X = X_1 \times  X_2 \subset \C^2$ with the product measure $\mu=\mu_1 \times \mu_2$.
Then \cite{KristensenEtAl} showed that \textbf{Bad}$_{\C^2}^{\bar r} \cap X$ is of Hausdorff-dimension dim$(X)$. 
In fact, in this case, $\mu$ is an absolutely decaying measure 
(compare with Lemma \ref{ProductDecaying}, modified with respect to complex affine subspaces),
we can modify the proof below and show that \textbf{Bad}$_{\C^2}^{\bar r}\cap X$ is absolute $\psi$-winning with respect to $\cal{S}$ in $X$.
Moreover, \eqref{DimensionComplex} holds for sets $U\subset X$ and with respect to the product measure $\mu$.

For simplicity and since all the arguments can be carried out analogously to the proof of Theorem \ref{BadN} with respect to the complex setting,
we restrict to the full space $X=\C^2$ and only sketch the proof.

\begin{proof}[Sketch of the Proof.]
For $n \in \Lambda\equiv \N_{\geq 2}$ define the resonant set
\be
\nonumber
	R_n \equiv \{ (z_1/q, z_2/q) \in \C^2 : z_1, z_2, q \in \Z[i], 0< \lvert q\rvert < n \}
\ee
with size $s_n \equiv \log(n)$, which gives a nested and discrete family $\cal{F}$.

We remark that implicitly in the proof of \cite{KristensenEtAl}, Theorem 17,  the following analogue of the Simplex Lemma is contained.

\begin{lemma}
There exists $\bar l_* > 0$ such that, 
if $D=B(x_1, r_1) \times B(x_2, r_2)$ is a box with $r_1 r_2 <  e^{-\bar l_*} n^{-3}$,
then $D \cap R_n$ is contained in a complex line $L$. 
\end{lemma}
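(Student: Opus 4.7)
The plan is to adapt the proof of the real Simplex Lemma (Lemma \ref{SimplexLemma}) to the complex setting, using that a $3\times 3$ determinant built from three points lying on a complex line in $\C^2$ must vanish, together with the fact that a nonzero Gaussian integer has absolute value at least $1$.

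First I would argue by contradiction: suppose there exist three points $(z_i^1/q_i,\, z_i^2/q_i) \in D \cap R_n$, $i=1,2,3$, not contained in any complex line. Since a complex (affine) line in $\C^2$ is defined by a single complex linear equation, three points $(a_i, b_i) \in \C^2$ lie on a common complex line if and only if
\[
	\Delta \equiv \det\begin{pmatrix} 1 & a_1 & b_1 \\ 1 & a_2 & b_2 \\ 1 & a_3 & b_3 \end{pmatrix} = 0.
\]
Hence under our assumption, $\Delta \neq 0$ for $a_i = z_i^1/q_i$, $b_i = z_i^2/q_i$.

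Next I would clear denominators: the quantity $q_1 q_2 q_3 \Delta$ is a polynomial with integer (in fact $\Z[i]$) coefficients evaluated at Gaussian integers, hence it lies in $\Z[i]$, and is nonzero by the previous step. Since every nonzero element of $\Z[i]$ has absolute value at least $1$, we obtain
\[
	\lvert \Delta \rvert \;\geq\; \frac{1}{\lvert q_1 q_2 q_3 \rvert} \;>\; \frac{1}{n^{3}},
\]
using $0< \lvert q_i \rvert < n$.

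On the other hand, expanding the determinant as $\Delta = (a_2 - a_1)(b_3 - b_1) - (a_3 - a_1)(b_2 - b_1)$ and applying the triangle inequality together with the fact that all $a_i$ lie in $B(x_1, r_1)$ and all $b_i$ lie in $B(x_2, r_2)$, gives
\[
	\lvert \Delta \rvert \;\leq\; 2\cdot (2r_1)(2r_2) \;=\; 8\, r_1 r_2.
\]
Combining both bounds yields $r_1 r_2 > 1/(8\, n^3)$, contradicting the hypothesis as soon as $\bar l_* \geq \log 8$. The main (minor) obstacle is the verification that three points of $\C^2$ lie on a complex line precisely when the above $3\times 3$ determinant vanishes and that clearing denominators produces a genuine Gaussian integer; once this is settled, the rest is the same volume/arithmetic-height argument as in the classical Davenport--Schmidt simplex lemma, and the constant $\bar l_* = \log 8$ is explicit.
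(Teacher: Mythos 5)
Your proof is correct and is the direct complex analogue of the Davenport--Schmidt argument that the paper spells out for the real Simplex Lemma (Lemma \ref{SimplexLemma}): clear denominators to get a nonzero Gaussian integer whose modulus is at least $1$, bound $|q_1 q_2 q_3|$ by $n^3$, and estimate the determinant from above by the box dimensions. The paper itself does not reproduce a proof and simply cites [KristensenEtAl, Theorem 17], so your argument is a valid self-contained replacement, with $\bar l_* = \log 8$ explicit.
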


\noindent Thus, for any $l_* > \bar l_*/3 $ with $\bar l_*$ as above, we have that $\psi(\bar x, \log(n) + l_*)$ is a box with radii $r_1$, $r_2$ satisfying
\be
\nonumber
	r_1 r_2 = e^{-(1+r^1)( \log(n) + l_*)}\cdot e^{-(1+r^2)( \log(n) + l_*)} < e^{-\bar l_*} n^{-3},
\ee
and we see that $\psi(\bar x, s_n + l_*) \cap R_n$ is contained in a complex line.
This shows that $\cal{F}$ is locally contained in $\cal{S}$, the set of complex lines, with $n_*=1$.

Moreover $(\Omega, \psi)$ is $b_*$-diffuse with respect to $\cal{S}$ and $d_*$-separating for some $b_*> 0$ and $d_* = \log(3)/(1+\min\{r^1, r^2\})$.
Thus, Theorem \ref{DiffuseProposition} implies that  $(\Omega, \psi)$ is strongly $\bar b_*$-diffuse with respect to $\cal{F}$, where $\bar b_*=l_*+b_* +d_*$,
as well as that  \textbf{Bad}$_{\C^2}^{\psi}( \cal{F})$ is absolute $(\psi, \bar b_*)$-winning with respect to $\cal{S}$.
Finally,  it is readily checked that \textbf{Bad}$_{\C^2}^{\psi}(\cal{F}) \subset $ \textbf{Bad}$_{\C^2}^{\bar r}$.
\end{proof}


\subsection{\textbf{Bad}$_{\Z_p^n}^{\bar r}$}
\label{BadZn}
Let $p$ be a prime number , $\lvert \cdot \rvert_p$ the $p$-adic absolute value 
and $\Z_p $ be the $p$-adic integers in the $p$-adic field $ \Q_p$.
For $n\geq 1$, let again $\bar r \in \R^n$ with $r^1,\dots, r^n \geq 0$ such that $\sum r^i =1$.
Because of the different properties of the $p$-adic field, 
we need to adjust the definition of  badly approximable $p$-adic vectors.
For further details, we refer to \cite{KristensenEtAl} and references therein. 
Let \textbf{Bad}$_{\Z_p^n}^{\bar r}$ be the set of points $\bar x = (x_1,\dots ,x_n)\in \Z_p^n$ 
for which there exists a positive constant $c(\bar x)>0$
such that 
\be
\nonumber 
 	\max_{i=1, \dots, n} \lvert  x_i - \tfrac{z_i}{q} \rvert_{p}^{1/(1+r^i)}  
	\geq c(\bar x) \max\{ \lvert z_1\rvert, \dots,\lvert z_n\rvert,\lvert q\rvert \}^{-1},
\ee
for all $(z_1, ..., z_n) \in \Z^n$ and $q \in \N$. 
Let $d(x,y)\equiv \lvert x-y\rvert_p$ be the $p$-adic metric on $\Z_p$.
For $(\bar x, t) \in \Q_p^n \times (0, \infty)$ consider the box
\be
\nonumber
	\bar \psi(\bar x, t) \equiv B(x_1, e^{-(1+r^1)t}) \times \dots \times B(x_n, e^{-(1+r^n)t}).
\ee 
For $n=2$, it was already shown by \cite{KristensenEtAl} that 
(a slightly different version of) \textbf{Bad}$_{\Z_p^2}^{\bar r}$ is of Hausdorff-dimension $2$.
We show the following stronger result.

\begin{theorem}
\textbf{Bad}$_{\Z_p^2}^{\bar r}$ is absolute $\psi$-winning with respect to the collection $\cal{S}$ of $p$-adic lines in $\Z^2_p$ and thick;
 that is, for any nonempty open set $U\subset \Z_p^2$, we have
\be
\nonumber
	\text{dim}(\textbf{Bad}_{\Z_p^2}^{\bar r} \cap U)=2.
\ee
\end{theorem}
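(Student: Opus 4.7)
The plan is to follow the same template as the proofs for $\textbf{Bad}_{\R^n}^{\bar r}$ and $\textbf{Bad}_{\C^2}^{\bar r}$ in the previous two subsections, adapting each ingredient to the $p$-adic setting. First, define for $n \in \Lambda \equiv \N_{\geq 2}$ the resonant set
\[
R_n \equiv \{(z_1/q, z_2/q) \in \Z_p^2 : z_1, z_2 \in \Z, q \in \N, \max\{|z_1|, |z_2|, |q|\} < n\},
\]
with size $s_n \equiv \log(n)$; by construction the family $\cal{F} = (\Lambda, R_n, s_n)$ is nested and discrete. One then verifies that $\textbf{Bad}^{\psi}_{\Z_p^2}(\cal{F}) \subset \textbf{Bad}_{\Z_p^2}^{\bar r}$ by the same elementary manipulation used in the previous subsections, so that it suffices to show the former is absolute $\psi$-winning with respect to $\cal{S}$.

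Second, and this is the main technical step, I would establish a $p$-adic version of the Simplex Lemma: there exists $\bar l_* > 0$ such that whenever $D = B(x_1,r_1) \times B(x_2,r_2) \subset \Z_p^2$ satisfies $r_1 r_2 < e^{-\bar l_*} n^{-3}$, the intersection $D \cap R_n$ is contained in a $p$-adic line $L \in \cal{S}$. The proof follows the classical argument: given three rationals $\bar p_i/q_i \in D \cap R_n$, either they lie on a common $p$-adic line or the corresponding $3 \times 3$ determinant is nonzero; in the latter case, bounding its $p$-adic absolute value from below (it is a nonzero rational with denominators at most $n$) versus the upper bound coming from the small radii produces a contradiction. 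Choosing $l_* > \bar l_*/3$, the box $\bar\psi(\bar x, s_n + l_*)$ has radii $r_i = e^{-(1+r^i)(\log(n) + l_*)}$ whose product is $e^{-(3)(l_*)}n^{-3} < e^{-\bar l_*}n^{-3}$, so that $\cal{F}$ is locally contained in $\cal{S}$ with $n_* = 1$.

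Third, the ultrametric structure of $\Q_p$ makes both types of separation very mild: $(\Omega, \psi)$ is $d_*$-separating and $d_*$-separating with respect to $\cal{S}$ for some small $d_* > 0$ determined by $\min\{r^1, r^2\}$. Moreover, the Haar measure $\mu$ on $\Z_p^2$ (which is a product of Haar measures on each factor) is absolutely $(\delta, c_\delta)$-decaying with respect to $\cal{S}$, with $\delta = 1 + \min\{r^1, r^2\}$; this can be verified directly in one $p$-adic dimension (a neighborhood of a point in $\Z_p$ has Haar measure decaying linearly in radius) and then extended to the product via Lemma \ref{ProductDecaying} suitably reformulated for $p$-adic hyperplanes. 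Proposition \ref{AbsDecMeasure} then yields that $(\Omega, \psi)$ is $b_*$-diffuse with respect to $\cal{S}$ for $b_* > 2d_* + \log(c_\delta)/\delta$, and Theorem \ref{DiffuseProposition} gives strong $\bar b_*$-diffusion with respect to $\cal{F}$ together with the desired absolute $\psi$-winning property with respect to $\cal{S}$.

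Finally, for the thickness claim, I would invoke Theorem \ref{Dimension}. The Haar measure $\mu$ on $\Z_p^2$ satisfies a power law with exponent $\tau = 2$ relative to the monotonic function $\psi$ (the box $\psi(\bar x, t)$ has measure of order $e^{-2t}$ up to a constant depending on $\bar r$), supp$(\mu) = \Z_p^2$, and conditions (MSG1--2) are immediate. Combining the absolute decay of $\mu$ with respect to $\cal{S}$ (via Proposition \ref{SphereDecayingMeasure}, using that $\cal{F}$ is locally contained in $\cal{S}$) furnishes an $f$-decaying structure that is $(d_*, b_*, 1, 0)$-decaying, so Theorem \ref{Dimension} yields $\dim(\textbf{Bad}_{\Z_p^2}^{\bar r} \cap U) \geq d_\mu(U) = 2$ for every nonempty open $U$, giving thickness. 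The main obstacle is the $p$-adic Simplex Lemma, whose determinant bound relies on the product formula relating $p$-adic and real absolute values; once that is in hand, the remaining verifications are essentially automatic transcriptions of the real and complex cases.
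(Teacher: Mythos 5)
Your proposal follows the paper's template for $\R^n$ and $\C^2$ quite faithfully, and the overall architecture (resonant sets, $p$-adic Simplex Lemma, local containment in $\cal{S}$, diffusion, Theorem \ref{DiffuseProposition}, Theorem \ref{Dimension}) is the right one. However, your route to $b_*$-diffusion with respect to $\cal{S}$ is genuinely different from the paper's. You propose to show that the Haar measure on $\Z_p^2$ is absolutely $(\delta,c_\delta)$-decaying with respect to $p$-adic lines (via a $p$-adic analogue of Lemma \ref{ProductDecaying}) and then invoke Proposition \ref{AbsDecMeasure}. The paper instead proves $b_*$-diffusion directly by a combinatorial counting argument borrowed from \cite{KristensenEtAl}: any family of disjoint boxes $\psi(\bar x_i, t+b_*) \subset \psi(\omega)$ meeting a fixed $p$-adic line $L$ has cardinality $O(e^{b_*(1+\max\{r^1,r^2\})})$, whereas by the power law one can fit a family of size on the order of $e^{3 b_*}$; since $3 > 1 + \max\{r^i\}$, for large $b_*$ a box avoiding $L$ exists, and separating then gives \eqref{NbDiffuse}. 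Your absolute-decay route is in principle viable (and arguably more uniform with the $\R^n$/$\C^2$ treatment), but the claim that the Haar measure is absolutely decaying with respect to $p$-adic lines under the weighted $\psi$ is not automatic: it requires a careful Fubini argument accounting for the constraint on the first coordinate (otherwise the decay appears to break down for intermediate slopes and small $s$), and the "suitably reformulated" $p$-adic version of Lemma \ref{ProductDecaying} is itself a non-trivial step that you leave unverified. The paper's counting argument sidesteps this.

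Separately, your power law claim is wrong: you assert $\tau=2$ and that $\mu(\psi(\bar x, t))$ is of order $e^{-2t}$, but $\mu(\psi(\bar x,t)) \asymp e^{-(1+r^1)t}\cdot e^{-(1+r^2)t} = e^{-3t}$ since $r^1+r^2=1$, so the power law exponent for $(\Omega,\psi,\mu)$ is $\tau=3$ (as the paper states). This error is harmless for the final statement, because Theorem \ref{Dimension} yields $\dim \geq d_\mu(U)$, and the lower pointwise dimension $d_\mu(U)$ is taken with respect to the metric $d$, not $\psi$, and indeed equals $2$; still, you should correct it because $\tau$ does enter the internal estimates of Theorem \ref{Dimension} (and, in the paper's argument, the crucial comparison $3 > 1+\max\{r^i\}$ that makes the counting argument close).
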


\begin{proof}[Sketch of the proof]
\noindent 
As previously, let $\Lambda\equiv \N_{\geq 2}$ and for $n\in \Lambda$ define the resonant set
\be
\nonumber
	R_n \equiv \{ (z_1/q, z_2/q) \in \Q_p^2 : z_1, z_2 \in \Z, q \in \N \text{ such that } 
	 \max\{\lvert z_1 \vert,\lvert z_2 \vert,\lvert q \vert  \} < n \}
\ee
with the size $s_n \equiv \log(n) $.
For the nested discrete family $\cal{F} = (\Lambda, R_n, s_n)$ we show that for $X=\Z^2_p$, $(\Omega, \psi)$ is strongly $b_*$-diffuse with respect to $\cal{F}$.

Let $m\equiv \mu \times \mu$, where $\mu$ is the normalized Haar-measure on $\Q_p$.
Hence, $\mu(\Z_p)=1$ and $m(B(x_1, r_1) \times B(x_2, r_2)) = p^{-(t_1+t_2)}$ for $p^{-t_i}\leq r_i \leq p^{-t_i+1}$ and $t_i\in \N$, $i=1,2$.
In particular, for $\omega=(\bar x, t) \in \Omega$  we have $p^{-4}e^{-3t}\leq m(\psi(\omega)) \leq e^{-3t} $.
Thus,  $(\Omega, \psi, \mu)$ satisfies a power law with respect to the exponent $\tau=3$.

Again, we  remark that implicitly in the proof of \cite{KristensenEtAl}, Theorem 18, the following analogue of the Simplex Lemma is contained.

\begin{lemma}
\label{pAdicSimplex}
Let $D\subset \Z_p^2$ be a box of measure $m(D) < 1/(6 n^3)$.
Then there exists a  $p$-adic line $L$ such that $R_n \cap D \subset L$.
\end{lemma}

\noindent Thus, let $l_*> \log(6)/3$.
For $\omega=(\bar x, t ) \in \Omega$ and $s_n$ with $s_n \leq t$ we have $m(\psi(x,t+l_*)) < 1/(6 n^3)$
and Lemma \ref{pAdicSimplex} implies that 
$ R_n \cap \psi(x, t + l_*) \subset L$, 
for an affine $p$-adic line $L$.
This shows that $\cal{F}$ is locally contained in $\cal{S}$, the collection of $p$-adic lines, with $n_*=1$.

Next, we claim that $(\Omega, \psi)$ is $b_*$-diffuse with respect to $\cal{S}$ for $b_*>0$ sufficiently large.
Therefore, note that, as shown in \cite{KristensenEtAl}, for $b_*>0$ sufficiently large,
a geometric argument implies that any number of disjoint boxes $\psi(\bar x_i, t + b_*) \subset \psi(\omega)$, $\bar x_i \in \Z^2_p$, 
intersecting a $p$-adic $L$ is bounded above by $C \cdot e^{b_*(1+\max\{r^1, r^2\})}$, 
where $C$ is independent of $b_*$ and $t$.
Using that $(\Omega, \psi, \mu)$ satisfies a power law with respect to the exponent $\tau=3$,
for $b_*>0$ sufficiently large, 
there exists a collection of disjoint boxes $\psi(\bar x_i, t + b_*) \subset \psi(\omega)$, $\bar x_i \in \Z^2_p$,
whose number exceeds the one of its boxes intersecting $L$ (independently from $t$).
If we take such a box $\psi(\bar x_i, t + b_*) \subset \psi(\omega)$, $\bar x_i \in \Z^2_p$, not intersecting $L$,
then $\psi(\bar x_i, t + 2 b_*)$ is disjoint from $\psi(L, t + 2b_*)$ (if $b_*$ is sufficiently large).
This shows the above claim.

Since $(\Omega, \psi)$ is moreover $d_*$-separating for $d_* \leq \log(3)/(1+\min\{r^1, r^2\})$,
Theorem \ref{DiffuseProposition} shows that $(\Omega, \psi)$ is strongly $(2b_* + l_* + d_*)$-diffuse with respect to $\cal{F}$
and, moreover, that \textbf{Bad}$(\cal{F})$ is absolute $\psi$-winning with respect to $\cal{S}$.
Furthermore, by Theorem \ref{Dimension} and since (MSG1-2) is satisfied,
for any open set $U=B(z_1, e^{-t_1})\times B(z_2, e^{-t_2)} \cap \Z_p^2$, $z_1, z_2 \in \Z_p$, we have
\be
\nonumber
	\text{dim}(\textbf{Bad}(\cal{F}) \cap U)= d_{\mu}(U) =2.
\ee

Finally, let $\bar x\in \textbf{Bad}_{\Z_p^2}^{\psi}(\cal{F})$ and $(z_1/q, z_2/q) \in \Q^2$ 
with $\max\{\lvert z_1 \rvert, \lvert z_3 \rvert, \lvert q\rvert\}=n$.
There exists $c(x)<\infty$ such that $\bar x \not \in \bar \psi( R_n, s_n+c(x)) \supset \bar \psi((z_1/q, z_2/q), s_n+c(x))$.
Hence, for some $i\in\{1,2\}$ we have 
\be
\nonumber
	\lvert x_i - z_i/q \rvert_p > e^{-(1+r^i)(s_n + c(x)) } \geq e^{-(1+\max\{r^1, r^2\})(c_* + \log(4)+c(x))} n^{-(1+r^i)}.
\ee
Therefore, \textbf{Bad}$(\cal{F}) \subset \textbf{Bad}_{\Z^2_p}^{\bar r}$ which finishes the proof.
\end{proof}

\paragraph{II. Examples from Dynamical Systems.}


\subsection{The Bernoulli shift $\Sigma^+$}
\label{Bernoulli}
For $n\geq 1$, let $\Sigma^+=\{0,\dots ,n\}^{\N}$ be the set of one-sided sequences in symbols from $\{0,\dots ,n\}$.
Let $T$ denote the shift and let $d^+$ be the metric given by $d^+(w, \bar w)\equiv e^{-\min\{ i \geq 1: w(i) \neq \bar w(i) \}}$ for $w \neq \bar w$ and $d(w, w)\equiv0$.
 
Fix a periodic word $\bar w \in \Sigma^+$ of  period $p\in \N$ and consider the set 
\be
\nonumber
	S_{\bar w} = 
	\{ w\in \Sigma^+ : \exists \ c=c(w)< \infty \text{ such that }T^k w \not \in B(\bar w,  2^{-(p+c +1)}) \text{ for all } k \in \N\}.
\ee

\begin{theorem}
$S_{\bar \omega}$ is absolute winning (in the sense of McMullen) and of Hausdorff-dimension $\log(n)$ (and in fact thick).
\end{theorem}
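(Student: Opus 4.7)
The plan is to realize $S_{\bar w} = \textbf{Bad}(\cal{F})$ for a suitable nested discrete family and then invoke Proposition \ref{Sufficient}. Take $\bar X = X = \Sigma^+$, $\bar \Omega = \Sigma^+ \times (0, \infty)$, and equip it with the standard function $\bar \psi = B_1$, so that $B_1(w, t)$ is the cylinder of sequences agreeing with $w$ on the first $\lceil t \rceil$ positions. For each $j \geq 0$, set
\[
	R_j \equiv \{v \in \Sigma^+ : T^k v = \bar w \text{ for some } 0 \leq k \leq j\}, \qquad s_j \equiv j + p + 1.
\]
Nestedness $R_j \subset R_{j+1}$ and discreteness of $\{s_j\}$ are immediate, so (N) and (D) of Subsection \ref{Strategy} hold. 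Every $v \in R_j$ is an arbitrary initial segment of length $k \leq j$ followed by the shifted copy of $\bar w$, so $w$ is within distance $e^{-(j+p+1+c)}$ of some $v \in R_j$ if and only if there is $k \leq j$ with $T^k w \in B(\bar w, e^{-(p+1+c)})$. Taking the union over $j$ yields $\textbf{Bad}^{B_1}_{\Sigma^+}(\cal{F}) = S_{\bar w}$ (the $2^{-(\cdot)}$ versus $e^{-(\cdot)}$ mismatch in the radius only shifts the constant $c$).

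The main step is to verify the hypotheses of Proposition \ref{Sufficient}. The space $\Sigma^+$ is $1$-diffuse by Subsection \ref{DiffuseSpaces}, so only \eqref{Distinct} remains to be checked with $\sigma = 1$. Given distinct $x, y \in R_j$, write $T^{k_x} x = T^{k_y} y = \bar w$ with $k_x \leq k_y \leq j$ and let $m \equiv \min\{i \geq 1 : x(i) \neq y(i)\}$. If $m \leq k_y$, then $d^+(x, y) \geq e^{-j}$. Otherwise $x$ and $y$ agree on positions $1, \ldots, k_y$, and for $i > k_y$ one has $x(i) = \bar w(i - k_x)$ and $y(i) = \bar w(i - k_y)$; since $x \neq y$, $\bar w$ is not invariant under the shift by $k_y - k_x$, and the $p$-periodicity of $\bar w$ forces a disagreement within the first $p$ positions past $k_y$, giving $m \leq j + p$ and $d^+(x, y) \geq e^{-(j + p)} = e \cdot e^{-s_j}$. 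In either case $d^+(x, y) > e^{-s_j}$, so \eqref{Distinct} holds with $\bar c = 1$. Proposition \ref{Sufficient} then yields that $S_{\bar w} = \textbf{Bad}(\cal{F})$ is absolute winning in the sense of McMullen.

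For the Hausdorff dimension, I would use the uniform Bernoulli measure $\mu$ on $\Sigma^+$: each cylinder of length $k$ has $\mu$-mass $(n+1)^{-k}$, so $(\Omega, B_1, \mu)$ satisfies a power law and, by a straightforward cardinality count on the resonant cylinders, is $f$-decaying with respect to $\cal{F}$ for an $f$ of the decaying type required by Theorem \ref{Dimension}. That theorem then gives $\dim(S_{\bar w} \cap U) \geq d_\mu(U)$ for every nonempty open $U \subset \Sigma^+$, so $S_{\bar w}$ is thick and of full Hausdorff dimension. The main obstacle will be the combinatorial separation estimate above, where the key point is that the distinctness $x \neq y$ together with the $p$-periodicity of $\bar w$ precludes $k_y - k_x$ from being a period of $\bar w$, which would otherwise collapse $x$ and $y$ into the same sequence.
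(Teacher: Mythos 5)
Your proposal is correct and follows essentially the same route as the paper: identical choice of resonant sets (the paper writes them as concatenations $w_l\bar w$, you as preimages of $\bar w$ under the shift, which is the same thing), the same size assignment $s_j = j+p+1$, verification of the separation condition \eqref{Distinct}, and an appeal to Proposition \ref{Sufficient} for absolute winning followed by Theorem \ref{Dimension} with the Bernoulli measure for the dimension. The one place you genuinely add value is the separation estimate: the paper simply asserts that distinct points of $R_m$ must already differ within the first $m+p$ coordinates, whereas you supply the actual argument, namely that agreement through position $k_y+p$ together with the $p$-periodicity of $\bar w$ would force $\bar w$ to also be $(k_y-k_x)$-periodic and hence $x=y$. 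As a small side remark, your writing $(n+1)^{-k}$ for the cylinder mass is the careful bookkeeping given the alphabet $\{0,\dots,n\}$; the paper's proof uses weight $1/n$ per symbol, consistent with the stated dimension $\log(n)$ but not with an $(n+1)$-letter alphabet, so the paper appears to have an off-by-one in its notation. Your conclusion of full Hausdorff dimension sidesteps this and is correct either way.
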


\begin{remark} In particular, the intersection $\bigcap S_{\bar w}$ over all periodic words $\bar w \in \Sigma^+$ is $(B_1,1)$-absolute winning. 
Note that the Morse-Thue sequence $w$ in $\{0,1\}^{\N}$ is a particular example of a word in $\bigcap_{\bar w} S_{\bar w}$.
In fact, $w$ does not contain any subword of the form $WWa$ where $a$ is the first letter of the subword $W$;
for details and more general words in $\bigcap_{\bar w} S_{\bar w}$, we refer to the author's earlier work \cite{SchroederWeil}.
\end{remark}

\begin{proof}
For $k \in \N$ and  $ w_k \in \{0,..,n\}^k $,  let $\bar w_k \in \Sigma^+$ denote the word $\bar w_k=w_k \bar w$.
Let $\Lambda \equiv \N_0$ and consider the resonant sets
\be
\nonumber
	R_0 = \{\bar w\}, \ \ \ \ R_{k} = \{\bar  w_l \in \Sigma^+ : w_l \in \{1,..,n\}^l, l\leq k  \}) \cup R_0, \text{ for } k \in \N
\ee 
which we give the size $s_k = p+k+1$.
Then, $\cal{F} = (\N_0, R_k, s_k)$ is nested and discrete.

Let $\bar w_m$ and $\tilde w_m \in R_m$ be distinct.
By definition of $\bar w_m$ and $ \tilde w_m$ there exists $i \in \{1, \dots, m+p\}$ such that $\bar w_m(i) \neq \tilde w_m(i)$;
hence  
\be
\nonumber
	d^+(\bar w_m, \tilde w_m) \geq  e^{-(p+m+1)} =   e^{-s_m}
\ee
and we are given the special case \eqref{Distinct}.
Moreover, $(\Sigma^+, d^+)$ is $\beta$-diffuse for $\beta=1$.
Proposition \ref{Sufficient} shows that \textbf{Bad}$(\cal{F})$ is absolute-winning.

Moreover the probability measure $\mu = \{1/n, \dots , 1/n\}^{\N}$ satisfies 
\be
\nonumber
	 \mu(B(w, e^{-(t+1)})) = n^{-t} = n e^{- \log(n)(t+1)},
\ee
where $t\in \N$.
Hence, $(\Sigma^+ \times \N, B_1, \mu)$ satisfies a  $(\log(n),n,n)$-power law
and we see that \textbf{Bad}$(\cal{F})$ is of Hausdorff-dimension $\log(n)$ (and thick) by Theorem \ref{Dimension}.

Finally, we have \textbf{Bad}$(\cal{F}) = S_{\bar w}$.
In fact, $d^+(T^{k-1} w, \bar w) \leq e^{-(p+c+1)}$ for some $c\in \N$ if and only if  $w(k)\dots w(k+p+c)=\bar w(1)\dots \bar w(p+c)$.
Thus, for $w_k=w(1)\dots w(k)$ and $\bar w_k=w_k\bar w$ we have
$d^+(w, \bar w_k) \leq e^{-(p+k+c +1)}$ if and only if $w \in B(\bar w_k, e^{-(s_k +c)}) \subset \psi_1(R_k, s_k+c)$.
\end{proof}


\subsection{Toral endomorphisms $E_{\cal{M}, \cal{Z}}$}
\label{ToralEndo}
For the motivation, further generalizations and consequences of the following result, we refer to \cite{BFK} and references therein.
For $n\in \N$, let $\cal{M}=(M_k)$ be a sequence of real matrices $M_k\in GL(n,\R)$ 
and $\cal{Z}=(Z_k)$ be a sequence of $\tau_k$-separated%
\footnote{ That is, for every $y_1, y_2 \in Z_k$ we have $d(y_1,y_2) \geq \tau_k>0$.}
subsets of $\R^n$.
Define 
\be
\nonumber
	E_{\cal{M}, \cal{Z}} \equiv \{x \in \R^n  : \exists\  c=c(x) >0 \text{ such that } d(M_kx, Z_k) \geq c \cdot \tau_k \text{ for all } k \in \N\},
\ee
where $d$ is the Euclidean distance.
The sequence $\cal{M}$ is \emph{lacunary}, if for $t_k= \lVert M_k \rVert_{op}$ (the operator norm) 
we have $\inf_{k\in \N} \tfrac{t_{k+1}}{t_k} \equiv \lambda >1$.
The sequence $\cal{Z}$ is \emph{uniformly discrete}, if there exists $\tau_0>0$ such that every set $Z_k$ is $\tau_0$-separated.
Under the assumption that $\cal{M}$ is lacunary and $\cal{Z}$  is uniformly discrete, 
\cite{BFK} showed that if $X$ is the support of an absolutely $\delta$-decaying measure, 
then $E_{\cal{M}, \cal{Z} } \cap X$ is a winning set in $X$ for Schmidt's game.

Using similar arguments for the proof, we want to consider the following weaker condition in our weaker setting:
In fact, assume that, independently of $t\in \R^+$, we have
\be
\label{Count}
	\lvert \{ k \in \N : \log(t_k/\tau_k) \in (t-b, t] \} \rvert \leq \varphi(b), \ \ \text{ for all } b>0,  
\ee
for some function $\varphi : \R^+ \to \R^+$.
Note that if $\cal{M}$ is lacunary and $\cal{Z}$  is uniformly discrete, then \eqref{Count} holds for the function $\varphi(b) \leq b/\log(\lambda)$.

Let again $\cal{S}$ denote the set of affine hyperplanes in $\R^n$ and recall that the Lebesgue measure is absolutely $(1, c_0)$-decaying 
(see Lemma \ref{ProductDecaying}).

\begin{theorem}
Let $X \subset \R^n$ be the support of an absolutely $\delta$-decaying measure $\mu$,
let  $\cal{M}$ and  $\cal{Z}$ be as above satisfying \eqref{Count} for a function $\varphi(b) \leq e^{\bar \delta b}$, with $\bar \delta <\delta$.
Then, for every $n_*\in \N$ and $L_*\geq 0$ there is $b_*=b_*(n_*, L_*, \delta, \bar \delta)$ such that $(\Omega, B_1)$ is $(b_*, n_*, L_*)$-diffuse with respect to $\cal{F}$ defined below, where  \textbf{Bad}$_X(\cal{F}) \subset E_{\cal{M}, \cal{Z}} \cap X$.
\end{theorem}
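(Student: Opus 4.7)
The plan is to set up a nested, discrete family $\mathcal{F}$ of affine hyperplanes in $\R^n$ that slab-approximates the thin ``bad'' ellipsoids $M_k^{-1}(B(y,\tau_k))$, and then to apply Proposition \ref{DecayingMeasure} using the absolutely $\delta$-decaying hypothesis on $\mu$ together with the count \eqref{Count}. For a singular value decomposition $M_k=U_k\Sigma_kV_k^T$ with $\sigma_1(M_k)=t_k$, set $v_k=V_ke_1$ and $u_k=U_ke_1$; since the ellipsoid $M_k^{-1}(B(y,\tau_k))$ has shortest semi-axis $\tau_k/t_k$ in the $v_k$-direction, it lies in the slab of width $\tau_k/t_k$ around the affine hyperplane through $M_k^{-1}(y)$ orthogonal to $v_k$. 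Fix a small $\epsilon>0$, and for $k\in\N$, $j\in\Z$ set $H_{k,j}=\{z\in\R^n:v_k\cdot z=j\epsilon\tau_k/t_k\}$ and $s_{k,j}=\log(t_k/\tau_k)$; the family is then $\Lambda=\N$, $s_m=m$, $R_m=\bigcup_{(k,j):s_{k,j}\le m}H_{k,j}$, restricted to the (fixed) compact region where the game is played so that each $R_m$ is a finite union.

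For the inclusion $\textbf{Bad}_X(\mathcal{F})\subset E_{\mathcal{M},\mathcal{Z}}\cap X$, given $x\in\textbf{Bad}_X(\mathcal{F})$ and $y\in Z_k$, choose $j$ minimizing $|j\epsilon\tau_k/t_k-u_k\cdot y/t_k|$; the hyperplane $H_{k,y}$ through $M_k^{-1}(y)$ orthogonal to $v_k$ then satisfies $\dist(H_{k,y},H_{k,j})\le\epsilon\tau_k/(2t_k)$, while badness produces $\dist(x,H_{k,j})\ge c'\tau_k/t_k$ for some $c'=c'(x)>0$. The singular-value identity $t_k|v_k\cdot z|\le\|M_kz\|$ applied to $z=x-M_k^{-1}(y)$ then yields
$$d(M_kx,y)=\|M_kz\|\ge t_k\dist(x,H_{k,y})\ge (c'-\epsilon/2)\tau_k,$$
so taking $\epsilon$ sufficiently small gives a positive uniform lower bound, hence $x\in E_{\mathcal{M},\mathcal{Z}}$.

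For $(b_*,n_*,L_*)$-diffuseness I apply Proposition \ref{DecayingMeasure}. Given a formal ball $\omega=(x,r)\in\Omega$, only hyperplanes meeting $B(x,2e^{-r})$ contribute, and the constraint $|j\epsilon\tau_k/t_k-v_k\cdot x|\le 2e^{-r}$ together with $t_k\le e^r\tau_k$ (for $k$ with $s_{k,j}\le r$) forces $j$ into an interval of length $\le 4/\epsilon$, giving a uniform per-$k$ bound $C_0=C_0(\epsilon)$. By \eqref{Count} the number of relevant $k$'s is at most $\varphi(b)\le e^{\bar\delta b}$, so at most $C_0 e^{\bar\delta b}$ hyperplanes in $R(r,b)$ matter. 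Summing the absolutely $\delta$-decaying estimate $\mu(\psi(\omega)\cap\bar\psi(H,r+s))\le c_\delta e^{-\delta s}\mu(\psi(\omega))$ exhibits $(\Omega,B_1,\mu)$ as $f$-decaying with $f_b(s)\le C_0 c_\delta e^{\bar\delta b-\delta s}$. Substituting $(n_*(b+L_*)+d_*,b-2d_*)$, the $(d_*,b_*,n_*,L_*)$-decay inequality reduces to the exponent $(\bar\delta n_*-\delta)b+O_{n_*,L_*,\delta,\bar\delta}(1)$ being bounded above by $\log c_0$, which holds for $b>b_*=b_*(n_*,L_*,\delta,\bar\delta)$ large once $\bar\delta n_*<\delta$; Proposition \ref{DecayingMeasure} then delivers the asserted $(\bar b_*,n_*,L_*)$-diffuseness with $\bar b_*=b_*+2d_*$. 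The main obstacle is the coefficient $\bar\delta n_*-\delta$ in the final exponent: the estimate above closes cleanly in the regime $\bar\delta n_*<\delta$, and to cover the full range $n_*\in\N$ claimed in the theorem one needs a sharper measure-theoretic bound exploiting that the parallel tubes for a fixed $k$ cluster into $O(e^s)$ essentially disjoint fat slabs, so that the count $\bar\delta n_*$ in the exponent can be replaced by $\bar\delta$ alone.
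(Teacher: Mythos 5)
Your construction of $\mathcal{F}$ departs from the paper's in a way that breaks the inclusion $\textbf{Bad}_X(\mathcal{F})\subset E_{\mathcal{M},\mathcal{Z}}\cap X$. You build a fixed $\epsilon$-net of parallel hyperplanes $H_{k,j}$, whereas the paper defines the resonant set at scale $k$ to be (bounded pieces of) the hyperplanes $M_k^{-1}(z)+W_k$ through the \emph{actual} preimage points $M_k^{-1}(z)$, $z\in Z_k$. This matters: the parameter $\epsilon$ must be fixed once and for all when $\mathcal{F}$ is defined, but for $x\in\textbf{Bad}_X(\mathcal{F})$ the constant $c'(x)=e^{-(1+c(x))}$ is only known to be positive and can be arbitrarily small. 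For $x$ with $c'(x)\le\epsilon/2$ your estimate $d(M_kx,y)\ge(c'(x)-\epsilon/2)\tau_k$ is vacuous, and nothing prevents $Z_k$ from containing a point $y$ whose hyperplane $H_{k,y}$ sits far from every $H_{k,j}$ yet essentially passes through $x$ (so that $v_k\cdot(x-M_k^{-1}(y))$ is tiny). So the inclusion genuinely fails for such $x$, and you cannot repair it by ``taking $\epsilon$ small'' because no single $\epsilon$ works for all $x$. The paper's $Y_k(z)$ construction avoids this: badness then directly gives $d(x,M_k^{-1}(y)+W_k)\ge\bar c(x)\tau_k/t_k$, hence $d(M_kx,y)\ge\bar c(x)\tau_k$, with no $\epsilon/2$ loss. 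Your $f$-decaying estimate would go through for the paper's family too, so the right move is to adopt the preimage hyperplanes rather than a net.

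On the last paragraph: you are right that as written (both in your argument and, in fact, in the paper's own proof) the $(d_*,b_*,n_*,L_*)$-decay inequality only closes when $\bar\delta n_*<\delta$, since plugging $\varphi(b')\le e^{\bar\delta b'}$ into $f(n_*(b+L_*)+d_*,\,b-2d_*)$ yields the exponent $(\bar\delta n_*-\delta)b+O(1)$. This is a sharp and correct observation about a real limitation. However, your speculated fix is aimed at the wrong quantity: for a fixed $k$ only $O(1/\epsilon)$ (in the paper's setup: exactly one) hyperplane contributes, so clustering parallel tubes within a single $k$ does not help. The obstruction is the number $\varphi(n_*(b+L_*))$ of distinct indices $k$, each potentially with a different normal direction $v_k$, and nothing clusters those. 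Without a genuinely different argument, the claim should be read as holding for $n_*$ with $\bar\delta n_*<\delta$ (which covers the main case $n_*=1$, hence the weak winning statement, and covers all $n_*$ in the lacunary/uniformly-discrete case since there $\varphi$ is polynomial and $\bar\delta$ can be taken arbitrarily small).
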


\noindent
In particular,  $E_{\cal{M}, \cal{Z}}  \cap X$ is  $B_1$-weakly-winning by Theorem \ref{Winning} 
and, in view of  Propositions \ref{Preserving} and \ref{Intersection},
the same is true for its image under any bi-Lipschitz map 
and for every finite intersection $\cap_{i=1}^{n_*}E_{\cal{M}_i, \cal{Z}_i}$ of such families $(\cal{M}_i, \cal{Z}_i)$.

Moreover, if $\mu$ satisfies moreover  a power law with respect to the exponent $\tau$,
then $E_{\cal{M}, \cal{Z}} \cap X$ 
is of Hausdorff-dimension $\tau$ (and in fact thick) 
by Theorem \ref{Dimension}.

\begin{proof}
Let $v_k \in \R^n$ be the unit vector such that $\lVert M_k v_k \rVert = t_k$ 
and if $V_k \equiv \{M_kv_k\}^{\perp}$ is the subspace orthogonal to $M_kv_k$, let $W_k \equiv M_k^{-1}(V_k)$.
Then, for $k \in \N$ and $z \in Z_k$  we  define the subsets 
\be
\nonumber
	Y_k(z)\equiv (M_k^{-1}(z) + W_k)  \cap M_k^{-1}(B(z, \tau_k/4) ).
\ee
Set $s_k \equiv \log(\tau_k/t_k) + \log(12)$, which we reorder such that $s_k \leq s_{k+1}$, so that we obtain a discrete set of sizes.
For $k\in \Lambda \equiv \N$ let  the resonant set $R_k$ be given by
\bea
\nonumber
	R_k &\equiv& \{ x \in Y_l(z_l): z_l \in Z_l \text{ and }  \log(t_l / \tau_l)   \leq s_k \} 
	\\ \nonumber
		&=& \{ x \in Y_l(z_l): z_l \in Z_l \text{ and }  \frac{\tau_l}{t_l} \geq \frac{\tau_k}{t_k} \}, 
\eea
which gives a nested and discrete family $\cal{F}=\{\N, R_k, s_k\}$.

Note that for all $x\in \R^n$ we have $\lVert x \rVert \geq \lVert M_kx \rVert /t_k$.
Hence, for distinct points $z_1$, $z_2 \in Z_k$, $Y_k(z_1)$ and $Y_k(z_2)$ are subsets of parallel affine hyperplanes and  we have
\bea
\label{Disjoint1}	
 	\lVert Y_k(z_1) - Y_k(z_2) \rVert &\geq&
	\lVert M_k^{-1} (B(y_1, \tau_k/4)) -M_k^{-1} (B(y_2, \tau_k,/4)) \rVert 
	\\ \nonumber
	&\geq& \frac{\tau_k - 2 \tau_k/4}{t_k}  =  \frac{\tau_k }{2t_k}  \geq 6 e^{-s_k},
\eea
since $Z_k$ is $\tau_k$-separated.
Given a closed ball $B=B(x, 3 e^{-t}) \subset \R^n$ with $x\in X$, for every $k\in \N$ with $s_k \leq t$, 
it follows from \eqref{Disjoint1} that at most one of the sets $Y_k(y)$, $y\in Z_k$, can intersect $B$.
Moreover, for $b>0$, the number of $k \in \N$ with $s_k \in (t-b, t]$
is bounded by $\varphi(b)$ by  \eqref{Count}. 
Thus, there exist at most $N=\lfloor \varphi(b) \rfloor$ affine hyperplanes $L_1, \dots, L_N \in \cal{S}$ such that
\be
\nonumber
	B \cap  R(t,b)  \subset \bigcup_{i=1}^N L_i.
\ee
Since $\mu$ is absolutely $(\delta, c_{\delta})$-decaying with respect to $\cal{S}$, and $(\Omega, B_1)$ is $\log(3)$-separating,
we have for $B=B(x, e^{-t})$ and $s\geq 0$ that
\bea
\nonumber
	\mu(B \cap \cal{N}_{e^{-(t + s)}}\big(R(t,b) \big) )
		&\leq& \sum_{i=1}^N  \mu( B \cap \cal{N}_{e^{-(t +s)}}(L_i)) 
		\\ \nonumber
		&\leq& \varphi(b) \cdot c_{\tau} e^{-\delta s} \mu( B) \equiv f(b, s) \mu(B).
\eea
Note that, since $\varphi(b) \leq e^{ \bar \delta b}$ with  $\bar \delta <\delta$, 
for all $n_*\in \N$ and $L_* \geq 0$,
there exists a $b_*= b_*(n_*, L_*, \delta, \bar \delta)$ such that
$f(n_*(b + L_*) + \log(3) , b-  2\log(3)) \leq c_0 <1$ for all $b>\bar b_*$.
Thus, we showed that $(\Omega, B_1, \mu)$ is $f$-decaying with respect to $\cal{F}$ and $f$ is $(\log(3), b_*, n_*, L_*)$-decaying.

Moreover, $(\Omega, B_1)$ is $\log(3)$-separating with respect to $\cal{F}$.
Hence, by Proposition \ref{DecayingMeasure},
$(\Omega, B_1)$ is $( b_* + \log(3),n_*, L_*)$-diffuse with respect to $\cal{F}$.

Finally, let $x \in $ \textbf{Bad}$_X(\cal{F})$, that is,  there exists $c<\infty$ such that
\be
\nonumber
	d(x, Y_k(y))\geq e^{-(s_k+c)} \geq   e^{-c - \log(12)}  \tau_k/t_k  \equiv \bar c \tau_k/t_k
\ee
  for every $k\in \N$ and $y\in Z_k$.
Assume that $M_k x \in B(y, \tau_k/4)$.
Then,  
\be
\nonumber
	 x \in \cal{N}_{\bar c \tau_k/t_k}(M_k^{-1}(y) + W_k)^C \cap M_k^{-1}(B(y, \tau_k/4))  
\ee
and we can write the vector $v = x- M_k^{-1}(y)$ as $v=w + \tilde c \tau_k/t_k v_k$ with $w \in W_k$ and $\tilde c \geq \bar c$.
Hence, since $M_kW_k$ is orthogonal to $M_kv_k$, 
\be
\nonumber
	d(M_k x, y) 
	= \lVert M_k v \rVert 
	= \lVert M_k w + \tilde c\ \tfrac{\tau_k}{t_k} M_k v_k \rVert \geq \tilde c\ \tfrac{\tau_k}{t_k} \lVert M_k v_k \rVert \geq \bar c\tau_k,
\ee
so that $M_k x \not \in B(y_k, \bar c \tau_k)$.
This shows that \textbf{Bad}$_X(\cal{F}) \subset E_{\cal{M}, \cal{Z}} \cap X$, finishing the proof.
\end{proof}


\subsection{The geodesic flow in CAT(-1)-spaces}
\label{CAT(-1)}
We discuss this example  in more details.
If $GZ$ denotes the space of geodesic rays in a proper geodesic CAT(-1) metric space $Z$, then the semigroup $\R^+$ acts on $GZ$ 
via the geodesic flow $(g^s)$ which itself acts by reparameterization,
\be \nonumber
	g^s(\gamma)(t) = \gamma(t+s).
\ee
Given a collection $\cal{C}$ of (convex) sets in $Z$ we can ask about the rays which avoid
contractions of or have bounded penetrations in neighborhoods of these sets.
The behavior of  penetration lengths of geodesic rays in convex subsets of $Z$ leads to a model of Diophantine approximation in CAT(-1)-spaces, 
developed by Hersonsky, Parkkonen and Paulin in \cite{HersonskyPaulin,HersonskyPaulin2,Parkkonen2}, allowing applications to metric number theory,
as well as \cite{MayedaMerrill}.
With respect to the visual metric $d_o$ (where $o$ is a base point),
we thereby translate our problem to the compact metric space $(\partial_{\infty}Z, d_o)$ and, 
since $d_o$ is a metric on the set of asymptotic rays, 
we induce suitable resonant sets  in $\partial_{\infty}Z$ related to the collection $\cal{C}$.

We begin by introducing the setting and stating the main results of this subsection.
In Subsubsection \ref{DANegative} we introduce the model of Diophantine approximation and relate the model to our setting and results.
In Subsubsection \ref{PattersonMeasure}, we discuss on the question of the Hausdorff-dimension and on the required conditions.
In order to keep the exposition readable, we will skip all of the main proofs until Subsubsection \ref{ProofsCAT(-1)}.

\subsubsection{Main Results}
For a general reference and further details we refer to \cite{Bridson}.
In the following, $(Z,d)$ denotes a proper geodesic CAT(-1) metric space 
and, for a convex subset $Y \subset Z$, $\partial_{\infty}Y$ its visual boundary, that is, the set of  equivalence classes of asymptotic rays in $Y$.
Equip $\bar Z \equiv Z \cup \partial_{\infty}Z$ with the cone topology.
Given two points $x, y\in \bar Z$ we denote by $[x,y]$ the unique geodesic segment from $x$ to $y$.
For three points $o, x, y \in \bar Z$, let 
\be
\nonumber
	(x,y)_o \equiv  \frac{1}{2}( d(o,x) + d(o, y) - d(x,y))
\ee
be the Gromov-product at $o$ 
and for $\xi, \eta \in \partial_{\infty}Z$, let $(\xi, \eta)_o \equiv \lim_{t\to \infty} (\gamma_{o, \xi}(t), \gamma_{o, \eta}(t))_o$
be the extended Gromov-product at $o$, 
where $\gamma_{o, \xi} \equiv [o, \xi] $. 
For $o\in Z$, we define $d_o:   \partial_{\infty}Z \times \partial_{\infty}Z \to [0, \infty)$ by  $d_o(\xi, \xi )\equiv 0$ and  for $\xi \neq \eta$ by 
\be
\nonumber
	d_o(\xi, \eta) \equiv e^{-(\xi, \eta)_o},
\ee
called the \emph{visual metric} at $o$. 
Then $(\partial_{\infty}Z, d_o)$ is a compact metric space.
\footnote{
Note that the visual distance at a point $o\in Z$ is comparable to the Hamenst\"adt metric with respect to a horoball $H_0$:
For every compact subset $K$ of $\partial_{\infty}Z - \partial_{\infty}H_0$, there exists a constant $c_K>0$ 
such that for all $\xi, \eta \in K$,
\be
\nonumber
	c_K^{-1} d_o (\xi, \eta) \leq d_{H_0}(\xi, \eta) \leq c_K d_o(\xi, \eta);
\ee
see \cite{HersonskyPaulin}, Lemma 2.3.
We therefore focus only on the visual distance in our settings, which can however,
up to further requirements,  
be replaced by the Hamenst\"adt metric.
}

For $\xi \in \partial_{\infty}Z$ and $y\in Z$, the \emph{Busemann function} $\beta = \beta_{\xi,y} : Z  \to \R$ (with respect to $y$) is defined by
\be
\nonumber
	\beta(x) \equiv \lim_{t \to \infty} d(x, \gamma_{y, \xi}(t)) - t,
\ee
which is  continuous and convex on $Z$ and $\beta(y)=0$.
The level sets of $\beta_{\xi,y}$ are called \emph{horospheres} at $\xi$ and
the sublevel sets are called \emph{horoballs} at $\xi$ (with respect to $y$).

For technical reasons, let $t_0>0$ be a sufficiently large constant determined below.
Now, given a base point $o \in Z$,
assume we are given a countable collection of closed convex sets $\cal{C} = \{ C_m \subset Z : m \in \N \}$ 
such that the collection of distances $\{ d_m \equiv d(o, C_m): m \in \N \} \subset (t_0, \infty)$ is a discrete set.
Remarking that $Z$ is a $\delta_0$-hyperbolic space for some $\delta_0>0$ (see \eqref{Tripod}), we will consider the following three cases simultaneously:
\begin{itemize}
\item[1.] $\cal{C}_1  = \{C_m\}$ is a collection of pairwise disjoint horoballs based at $\partial_{\infty}C_m \equiv \xi_m$.
\item[2.] $\cal{C}_2 = \{C_m\}$ is a collection of convex sets with $\lvert \partial_{\infty}C_m\rvert \geq 1$  which is \emph{$(2 \delta_0, T)$-embedded}; that is,
we have that diam$(\cal{N}_{2 \d_0}(C_i )\cap \cal{N}_{2 \delta_0}(C_j)) \leq T$ for $i\neq j$.
\item[3.] $\cal{C}_3 = \{x_m\}$ is a collection of $\tau_0$-separated points $x_m$ in the  hyperbolic space $Z=\H^{n+1}$.
\end{itemize}
Note that Case $1.$ is in fact covered by Case $2$. but treated explicitly as an interesting special case.

In the first two cases, we obtain a collection of nonempty sets $\cal{C}^{\infty}_i \equiv \{ \partial_{\infty}C_m\}$ in $ \partial_{\infty}Z$
which we will see is disjoint.
For the third case, let $\cal{C}^{\infty}_3 \equiv \{ \xi^{\infty}_m \} $ be the collection of the boundary projections of $x_m$ with respect to $o$,
that is $\xi^{\infty}_m\equiv \gamma_{o,x_m}(\infty) \in S^n = \partial_{\infty}\H^{n+1}$.
By abuse of notation, $\xi^{\infty}_m$ is also denoted by $\partial_{\infty}C_m$ in the following.
The following result on the distribution of $\cal{C}_{\infty}$ in $\bar X$ is crucial.

\begin{proposition}
\label{DistributionCAT(-1)}
Let $l_1 \equiv \delta_0$ and $l_2 \equiv T + 2\delta_0$.
Then, for the respective cases, we have 
\begin{itemize}
\item[1.] $d_o(\xi_i, \xi_j) > e^{-l_1} e^{- \max\{d_i, d_j\}}$,
\item[2.]  $d_o(\partial_{\infty}C_i, \partial_{\infty}C_j ) > e^{-l_2} e^{- \max\{d_i, d_j\}}$,
\end{itemize}
for $i\neq j$.
Moreover, there exists a constant $c_0=c_0(\tau_0)$ such that, for every $b>0$ and every ball $B=B_{d_o}(\xi, 2e^{-t})$,
\begin{itemize}
\item[3.] $\lvert \{ \xi_m^{\infty} \in B : d_m \in (t-b, t]\}\rvert \leq c_0 \ b$.
\end{itemize}
\end{proposition}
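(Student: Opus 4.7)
The plan is to reduce all three statements to geometric estimates using the $\delta_0$-hyperbolicity of $Z$ (with $\delta_0$ a universal CAT$(-1)$ constant), combined with the identity $d_o(\xi, \eta) = e^{-(\xi, \eta)_o}$; this turns the first two inequalities into upper bounds $(\xi_i, \xi_j)_o \leq \max\{d_i, d_j\} + l_k$, $k = 1, 2$. Throughout I use the standard $\delta_0$-fellow-traveling fact: if $t \leq (\xi_i, \xi_j)_o$ then $d(\gamma_{o, \xi_i}(t), \gamma_{o, \xi_j}(t)) \leq \delta_0$.

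For Case 1, I would argue via the Busemann function. If $C_i$ is the horoball $\{\beta_{\xi_i, o} \leq -d_i\}$, then the standard CAT$(-1)$ comparison gives $\beta_{\xi_i, o}(\gamma_{o, \xi_j}(t)) \leq -t + 2\max\{0, t - (\xi_i, \xi_j)_o\} + \delta_0$. Assuming WLOG $d_j \geq d_i$ and supposing $(\xi_i, \xi_j)_o > d_j + \delta_0$, evaluation at $t = d_j$ yields $\gamma_{o, \xi_j}(d_j) \in \partial C_j \subset C_j$ and simultaneously $\beta_{\xi_i, o}(\gamma_{o, \xi_j}(d_j)) \leq -d_j + \delta_0 \leq -d_i$, placing the same point in $C_i$ and contradicting disjointness (the minor slack $d_j - d_i \geq \delta_0$ is arranged by enlarging the constant $l_1$ if necessary, and the case $d_j = d_i$ is handled by choosing $t_0$ so that near-tangency is excluded).

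Case 2 is a thickened version of the same scheme, but argued at the level of positions rather than Busemann functions. A preliminary step shows that for any $\eta \in \partial_{\infty} C_i$ the ray $\gamma_{o, \eta}$ enters and stays in $\cal{N}_{2\delta_0}(C_i)$ after time $d_i + \delta_0$, using $\delta_0$-thinness of the ideal triangle $(o, \pi_i, \eta)$, where $\pi_i \in C_i$ realizes $d(o, C_i) = d_i$, combined with convexity of $C_i$ which puts $[\pi_i, \eta] \subset C_i$. Assuming now $(\xi_i, \xi_j)_o > \max\{d_i, d_j\} + T + 2\delta_0$, the segment of $\gamma_{o, \xi_j}$ over the interval $[\max\{d_i, d_j\} + \delta_0,\, (\xi_i, \xi_j)_o - \delta_0]$, which has length strictly greater than $T$, lies in $\cal{N}_{2\delta_0}(C_j)$ by the preliminary step and in $\cal{N}_{2\delta_0}(C_i)$ by $\delta_0$-closeness with $\gamma_{o, \xi_i}$ (which itself sits in $\cal{N}_{\delta_0}(C_i)$ over the same range). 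This produces a subset of $\cal{N}_{2\delta_0}(C_i) \cap \cal{N}_{2\delta_0}(C_j)$ of diameter exceeding $T$, contradicting the $(2\delta_0, T)$-embedded hypothesis.

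For Case 3 I pass to explicit hyperbolic trigonometry in $\H^{n+1}$. The condition $\xi_m^{\infty} \in B_{d_o}(\xi, 2 e^{-t})$ corresponds, up to a universal constant, to an angle bound $\alpha_m \leq C_1 e^{-t}$ at $o$ between the rays $[o, \xi]$ and $[o, x_m]$. The hyperbolic law of cosines at $o$ applied to the triangle with vertices $o$, $x_m$, $\gamma_{o, \xi}(d_m)$ gives
\[
  d\bigl(x_m, \gamma_{o, \xi}(d_m)\bigr) \leq C_2 \sinh(d_m)\, \alpha_m \leq C_2 C_1\, e^{d_m - t} \leq C_2 C_1,
\]
since $d_m \leq t$. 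Hence all candidate points lie in a tube of fixed radius $R = R(n)$ around the segment $\gamma_{o, \xi}([t-b, t])$, whose hyperbolic volume is at most $C_3(n)\, b$; since $\{x_m\}$ is $\tau_0$-separated, the disjoint balls $B(x_m, \tau_0/2)$ each of hyperbolic volume $\geq c(n)\tau_0^{n+1}$ sit in a slightly enlarged tube of volume still of order $b$, yielding the required $c_0(\tau_0)\, b$ bound by division (with $c_0$ enlarged to absorb the bounded-diameter contribution for small $b$, which is harmless in view of the discreteness of the set $\{d_m\}$). The chief obstacle is the constant-chasing in Case 2, where one must coordinate the $\delta_0$-fellow-traveling of the two rays with the entry times into the $2\delta_0$-neighborhoods of the two convex sets so as to deliver a segment of length $> T$ inside $\cal{N}_{2\delta_0}(C_i) \cap \cal{N}_{2\delta_0}(C_j)$; this accounting is exactly what dictates the choice $l_2 = T + 2\delta_0$.
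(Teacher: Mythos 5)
Your overall plan is sound and, for Cases 2 and 3, lands close to the paper's argument. Case 3 works: the hyperbolic law of cosines gives the same conclusion (all candidate $x_m$ lie in a tube of universal radius around $\gamma_{o,\xi}([t-b,t])$) that the paper derives from fellow-traveling via \eqref{Tripod}, and the volume-counting step is identical. Case 2 is essentially the paper's argument, but note a small constant slip: $\delta_0$-thinness of the ideal triangle $(o,\pi_i,\eta)$ places the relevant portion of $\gamma_{o,\eta}$ in $\cal{N}_{\delta_0}(C_i)$, not $\cal{N}_{2\delta_0}(C_i)$ as you wrote; you need the tighter bound, since otherwise the $\delta_0$-transfer to the other ray produces $\cal{N}_{3\delta_0}$ and the $(2\delta_0,T)$-embedding hypothesis no longer bites.

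The genuine gap is in Case 1. Your Busemann estimate (itself derivable from \eqref{Tripod} and $1$-Lipschitzness of $\beta_{\xi_i,o}$, so no objection there) gives $\beta_{\xi_i,o}(\gamma_{o,\xi_j}(d_j)) \leq -d_j+\delta_0$ at $t=d_j$, and you need $-d_j+\delta_0 \leq -d_i$, i.e.\ $d_j - d_i \geq \delta_0$. This fails for nearby sizes, and the proposed fix of "enlarging $l_1$" does not help: raising $l_1$ strengthens the hypothesis $(\xi_i,\xi_j)_o > d_j + l_1$, but your evaluation is still at $t = d_j$ and still produces the factor $-d_j + \delta_0$, so the gap persists for any $l_1$. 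The correct move is to evaluate at $t = d_j + \delta_0$ (legitimate once you assume $(\xi_i,\xi_j)_o \geq d_j + \delta_0$), which gives $\beta_{\xi_i,o}(\gamma_{o,\xi_j}(d_j+\delta_0)) \leq -(d_j+\delta_0)+\delta_0 = -d_j \leq -d_i$ and also $\beta_{\xi_j,o}(\gamma_{o,\xi_j}(d_j+\delta_0)) = -(d_j+\delta_0) < -d_j$, putting one point in both (disjoint) horoballs. This is in effect what the paper does, but phrased as depth: it compares the two rays at time $D + \delta_0$ with $D = \max\{d_i,d_j\}$ and observes both points sit $\delta_0$-deep in their respective horoballs, forcing the distance between them to exceed $2\delta_0$ while \eqref{Tripod} caps it at $\delta_0$. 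Either formulation delivers $l_1 = \delta_0$ exactly; the "near-tangency excluded by choosing $t_0$" suggestion is not a valid substitute, since nothing prevents two distinct horoballs in the collection from having $|d_i - d_j| < \delta_0$.
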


Given the three collections $\{ d_m^i \equiv d(o, C_m) : m \in \N \}$, $i=1,2,3$,
which we relabel to define the set of sizes $s^i_m \equiv d_m^i$ and reorder such that $s^i_m \leq s^i_{m+1}$.
For $m \in \Lambda_i \equiv \N$ let 
\be
\nonumber
	\bar R^i_m \equiv \{ \xi \in \partial_{\infty}C_j : \partial_{\infty}C_j \in \partial_{\infty}\cal{C}
	\text{ such that } e^{-d_j} \geq e^{-s_m}   \},
\ee
which gives a nested and discrete family $\cal{F}_i= (\N, R^i_m, s^i_m)$.
Given a closed subset $X \subset \bar X\equiv \partial_{\infty}Z$, set $\Omega \equiv X \times (t_0, \infty)$.
If moreover every set $\partial_{\infty} C_m \in \cal{C}^{\infty}_2$ is closed (hence compact), note that a point $\xi \in$ \textbf{Bad}$_X^{B_1}(\cal{F}_i)$ 
if and only if there exists a constant $c = c(\xi)>0$ such that,
for every $\partial_{\infty} C_m \in \cal{C}_i^{\infty}$,
\be
\label{BadInterpretation}
	d_o(\xi, \partial_{\infty} C_m) > c\ e^{- d(o,C_m)}.
\ee
Assuming that $X$ satisfies suitable diffusion properties in $\bar X$ with respect to the collections $\cal{C}^{\infty}_i$,
we obtain our main result using Proposition \ref{DistributionCAT(-1)}.

\begin{theorem}
\label{GeodesicFlow}
For Case 1. and for Case 2. if every set $C_i \in \cal{C}_2$ is a geodesic line,
assume that $X$ is $\beta$-diffuse.
Then \textbf{Bad}$_X^{B_1}(\cal{F}_i)$ is absolute-winning (in the sense of McMullen).

For Cases $1, 2$. assume that $(\Omega, B_1) $ is $b_*$-diffuse with respect to $\cal{C}_i^{\infty}$.
Then \textbf{Bad}$_X^{B_1}(\cal{F}_i)$ is absolute $B_1$-winning with respect to $\cal{C}_i^{\infty}$ (in particular Schmidt winning).

If $X$ is the support of a locally finite Borel measure such that $(\Omega, B_1, \mu)$ satisfies a power law with respect to the exponent $\tau$,
then, for every $n_*\in \N$ and $L_*\geq 0$, there is $b_*=b_*(n_*, L_*, \tau, \tau_0)$ such that $(\Omega, B_1)$ is $(b_*, n_*, L_*)$-diffuse with respect to $\cal{F}_3$.
\end{theorem}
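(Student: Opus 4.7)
The plan is to dispatch the three assertions of Theorem \ref{GeodesicFlow} by plugging the geometric information of Proposition \ref{DistributionCAT(-1)} into the abstract machinery already set up: for the first assertion I would verify the point-separation condition \eqref{Distinct} and apply Proposition \ref{Sufficient}; for the second I would show that $\cal{F}_i$ is locally contained in $\cal{C}_i^\infty$ with $n_*=1$ and invoke Theorem \ref{DiffuseProposition}; for the third I would extract an $f$-decay estimate from the linear counting bound and invoke Proposition \ref{DecayingMeasure}.

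For the first assertion (Case 1, and Case 2 restricted to geodesic lines), any two distinct points $\xi,\eta \in \bar R^i_m$ either lie in different boundary sets $\partial_\infty C_i, \partial_\infty C_j$ (with $d_i,d_j \leq s_m$), in which case Proposition \ref{DistributionCAT(-1)} gives $d_o(\xi,\eta) > e^{-l_i} e^{-\max\{d_i,d_j\}} \geq e^{-l_i} e^{-s_m}$, or they are the two endpoints of a single geodesic line $C_j$ in Case 2, in which case the Gromov product $(\xi,\eta)_o$ is controlled by $d(o,C_j) = d_j$ up to an additive $\delta_0$-constant by hyperbolicity, giving $d_o(\xi,\eta) \gtrsim e^{-d_j} \geq e^{-s_m}$. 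Hence \eqref{Distinct} holds with $\sigma=1$ and a uniform constant $\bar c>0$. Since $X$ is $\beta$-diffuse, Proposition \ref{Sufficient} then yields that $\textbf{Bad}(\cal{F}_i)$ is absolute-winning in the sense of McMullen, and by \eqref{BadInterpretation} this set coincides with $\textbf{Bad}_X^{B_1}(\cal{F}_i)$.

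For the second assertion, observe that the pairwise separation $d_o(\partial_\infty C_i,\partial_\infty C_j) > e^{-l_i} e^{-\max\{d_i,d_j\}} \geq e^{-l_i} e^{-t}$ (valid whenever $d_i,d_j \leq t$) implies that for any formal ball $\bar\psi(x,t+l_*)$ with $l_* > l_i + \log 2$, at most one set $\partial_\infty C_j$ contributing to $R(t)$ can meet $\bar\psi(x,t+l_*)$; so $\cal{F}_i$ is locally contained in $\cal{C}_i^\infty$ with $n_*=1$. Since $B_1$ on the proper metric space $\partial_\infty Z$ is $\log(3)$-separating, Theorem \ref{DiffuseProposition} applied with $\cal{S}=\cal{C}_i^\infty$ (under the standing hypothesis that $(\Omega,B_1)$ is $b_*$-diffuse with respect to $\cal{C}_i^\infty$) gives that $(\Omega,B_1)$ is strongly $\bar b_*$-diffuse with respect to $\cal{F}_i$ with $\bar b_* = l_* + \log(3) + b_*$, and that $\textbf{Bad}_X^{B_1}(\cal{F}_i)$ is absolute $B_1$-winning with respect to $\cal{C}_i^\infty$.

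For the third assertion (Case 3), fix a formal ball $\omega=(x,r)\in\Omega$ and parameters $b\geq 0$, $s\in \R$. By Proposition \ref{DistributionCAT(-1)}(3), the number of points $\xi^\infty_m$ with $d_m\in(r-b,r]$ lying in $B_{d_o}(x,2e^{-r})$ is at most $c_0(\tau_0)\, b$; each such point contributes a ball of $\mu$-mass at most $c_2 e^{-\tau(r+s)}$ to $\bar\psi(R(r,b),r+s)$, while $\mu(\psi(\omega))\geq c_1 e^{-\tau r}$ by the power law, so
\be\nonumber
\mu\bigl(\psi(\omega)\cap\bar\psi(R(r,b),r+s)\bigr) \leq c_0\, b\, c_2 e^{-\tau(r+s)} \leq \tfrac{c_0 c_2}{c_1}\, b\, e^{-\tau s}\, \mu(\psi(\omega)),
\ee
verifying that $(\Omega,B_1,\mu)$ is $f$-decaying with respect to $\cal{F}_3$ for $f_b(s)=\tfrac{c_0 c_2}{c_1}\, b\, e^{-\tau s}$. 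Since the prefactor is linear in $b$ while $e^{-\tau(b-2d_*)}$ decays exponentially, for given $n_*\in\N$, $L_*\geq 0$ I can choose $b_*=b_*(n_*,L_*,\tau,\tau_0)$ large enough that $f(n_*(b+L_*)+d_*,\,b-2d_*)<1$ for all $b>b_*$, i.e., $f$ is $(d_*,b_*,n_*,L_*)$-decaying with $d_*=\log(3)$. Since $B_1$ is $\log(3)$-separating with respect to $\cal{F}_3$ (whose resonant sets are discrete in the proper space $\partial_\infty Z$), Proposition \ref{DecayingMeasure} concludes that $(\Omega,B_1)$ is $(\bar b_*,n_*,L_*)$-diffuse with respect to $\cal{F}_3$ for $\bar b_*=b_*+2d_*$. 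The main obstacle I anticipate is the Case 2 endpoint-separation step, where I must track the $\delta_0$-fluctuation in the Gromov product of the two ends of a geodesic line at distance $d_j$ from $o$, and then propagate the various additive constants $l_*, d_*, b_*, \bar b_*$ consistently through the successive invocations of Propositions \ref{Sufficient}, \ref{DecayingMeasure}, and Theorem \ref{DiffuseProposition}.
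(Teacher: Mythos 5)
Your proposal follows the paper's own proof essentially step for step: assertion 1 via the separation estimate of Proposition \ref{DistributionCAT(-1)} plus the Gromov-product bound \eqref{Compare} for the two ends of a geodesic line to verify \eqref{Distinct} and invoke Proposition \ref{Sufficient}; assertion 2 by showing $\cal{F}_i$ is locally contained in $\cal{C}_i^{\infty}$ with $n_*=1$ and applying Theorem \ref{DiffuseProposition}; assertion 3 by converting the linear counting bound into the $f$-decay $f_b(s)\asymp b\,e^{-\tau s}$ and applying Proposition \ref{DecayingMeasure}. The argument is correct and matches the paper's route, including the choice of constants.
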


\noindent
In particular,  $\textbf{Bad}_X^{B_1}(\cal{F}_3)$ is a $B_1$-weakly-winning by Theorem \ref{Winning} 
and, in view of  Propositions \ref{Preserving} and \ref{Intersection},
the same is true for its image under any bi-Lipschitz map 
and for any finite intersection $\cap_{i=1}^{n_*}\textbf{Bad}_X^{B_1}(\cal{F}^i_3)$ of such families $\cal{F}^i_3$.
Moreover, $\textbf{Bad}_X^{B_1}(\cal{F}_3)$ is of Hausdorff-dimension $\tau$ (and in fact thick) by Theorem \ref{Dimension}.

We remark that the first case has been considered by \cite{MayedaMerrill} (as well as \cite{Aravinda,Dani,Dani2,MayedaMerrill,McMullen,Schmidt} in stronger and more specific settings than ours) 
in the setting of proper geodesic $\delta$-hyperbolic metric spaces
where they used a similar definition of badly approximable points using the size of the shadows of the disjoint horoballs.
In fact, note that our proof of Case $1.$ works equally well in their weaker setting.\\
Moreover, Case $3.$ also holds if $Z$ is a manifold of pinched negative curvature.

Before we relate our setting to the model of Diophantine approximation due to Hersonsky, Parkkonen and Paulin,
we want to point out the following dynamical interpretation,
which is one of the main reasons to require a CAT(-1) rather than a $\delta$-hyperbolic-space $Z$.

\begin{lemma} \label{DA-Dynamical}
Let $t_0, l_0>0$ be sufficiently large constants.
Given $C=C_m \in \cal{C}_{i}$ with $d(o, C) \geq t_0$ and $\xi \in \partial_{\infty}Z$,
we have
\begin{itemize}
\item[1.] 	$\gamma_{o, \xi}([t, t+ l]) \subset C$, 
\item[2.] 	$\gamma_{o, \xi}([t, t+l]) \subset \cal{N}_{2\delta_0}(C)$, 
\item[3.] 	$\gamma_{o, \xi}(t) \in B(x_m, e^{-l})$,
\end{itemize}
for a suitable time $t>t_0$ and length $l>l_0$, if and only if 
\begin{itemize}
\item[1.] 	$d_o(\xi, \xi_m) \leq \bar c \ e^{-l_0/2} \cdot e^{-d(o,C)}$,
\item[2.]	$d_o(\xi, \partial_{\infty}C) \leq \bar c\ e^{-l} \cdot e^{-d(o,C)}$, 
\item[3.]	$d_o(\xi, \xi_m^{\infty}) \leq \bar c\ e^{-l} \cdot e^{-d(o, C)}$,
\end{itemize}
where $\bar c>0$ is a universal constant.
\end{lemma}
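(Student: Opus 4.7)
The plan is to prove all three cases by reducing metric conditions in $Z$ to Gromov-product estimates at $o$ and then invoking the visual-metric identity $d_o(\xi,\eta)=e^{-(\xi,\eta)_o}$. The common toolkit consists of three CAT$(-1)$ facts that I will apply repeatedly: (i) for every closed convex $Y\subset Z$ and every $\eta\in\partial_\infty Y$, the ray $[o,\eta]$ enters $\cal{N}_{\delta_0}(Y)$ at time $d(o,Y)+O(\delta_0)$ and stays inside thereafter; (ii) two unit-speed rays $\gamma_{o,\xi},\gamma_{o,\eta}$ fellow-travel within $2\delta_0$ up to time $(\xi,\eta)_o+O(\delta_0)$; and (iii) the sharp CAT$(-1)$ divergence estimate $d(\gamma_{o,\xi}(t),\gamma_{o,\eta}(t))\asymp e^{t-(\xi,\eta)_o}$, valid whenever the right-hand side is below a universal constant.

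For Case 2 in the forward direction, suppose $\gamma_{o,\xi}([t,t+l])\subset\cal{N}_{2\delta_0}(C)$ with $t\geq d(o,C)-O(\delta_0)$. I pick $\eta\in\partial_\infty C$ on the side toward which $\gamma_{o,\xi}$ is heading. By (i) the ray $[o,\eta]$ runs $\delta_0$-close to $C$ past time $d(o,C)$, so $\gamma_{o,\xi}(t+l)$ (which is $2\delta_0$-close to $C$ and at distance $t+l+O(\delta_0)$ from $o$) is $O(\delta_0)$-close to $\gamma_{o,\eta}(t+l)$. By (ii) this forces $(\xi,\eta)_o\geq t+l-O(\delta_0)\geq d(o,C)+l-O(\delta_0)$, i.e.\ $d_o(\xi,\partial_\infty C)\leq\bar c\,e^{-l}e^{-d(o,C)}$. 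The converse reverses the argument: the visual bound combined with (ii) yields fellow-travelling with some $\eta\in\partial_\infty C$ realising the distance up to time $\gtrsim d(o,C)+l$, and (i) applied to $[o,\eta]$ places this segment inside $\cal{N}_{2\delta_0}(C)$.

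Case 1 is the specialisation to a horoball $C$ at $\xi_m$. Since $\partial_\infty C=\{\xi_m\}$, symmetry of the Busemann function $\beta_{\xi_m,\cdot}$ about the deepest-penetration point of $\gamma_{o,\xi}$ in $C$ yields the classical horoball identity: the penetration length of $\gamma_{o,\xi}$ inside $C$ equals $-2\log d_o(\xi,\xi_m) - 2 d(o,C) + O(\delta_0)$, from which the factor $e^{-l_0/2}$ (rather than $e^{-l_0}$) in Case 1 is immediate. Case 3 is handled via (iii): if $\gamma_{o,\xi}(t)\in B(x_m,e^{-l})$ then $|t-d(o,x_m)|\leq e^{-l}$, and putting $\eta=\xi_m^\infty$ so that $\gamma_{o,\eta}(d(o,x_m))=x_m$, (iii) converts $d(\gamma_{o,\xi}(t),\gamma_{o,\eta}(t))\leq 2e^{-l}$ into $(\xi,\xi_m^\infty)_o\geq t+l-O(1)\geq d(o,C)+l-O(1)$, yielding the desired visual-distance bound. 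The reverse implication runs the same chain of inequalities backwards.

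The main technical obstacle is constant bookkeeping. One must take $t_0$ large enough that the relevant penetration regions lie well past $o$ (so that the approximations in (i)--(iii) are valid), fix $l_0$ large enough that (iii) operates in its exponential-divergence regime in Cases 1 and 3 (so that a sharp $e^{-l}$ bound replaces the coarse $O(\delta_0)$ bound one would obtain from (ii) alone), and verify that a single universal constant $\bar c$ absorbs all the $O(\delta_0)$ and $O(1)$ errors uniformly across all three cases. It is precisely at step (iii) that the strict CAT$(-1)$ hypothesis enters, which is why an analogous dynamical dictionary in a general Gromov-hyperbolic space would only yield a coarser additive statement rather than the multiplicative one displayed in the lemma.
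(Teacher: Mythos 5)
Your overall strategy — reduce each dynamical condition to a Gromov-product estimate at $o$ and then invoke $d_o(\xi,\eta)=e^{-(\xi,\eta)_o}$ together with \eqref{Compare} — matches the paper's, but on the individual cases you take routes that are technically different from the paper's. For Case 1 the paper does not appeal to the horoball identity as folklore: it proves it, via Lemma \ref{RelationDeepPenetration} (relating penetration length to the shrinking parameter via Lemma \ref{Entering}) plus a claim comparing $d(o,x)$ with $d(o,C[s])$ using the Gromov $\delta_0$-thin-triangles property; your ``symmetry of the Busemann function'' argument is the same classical fact, but you should realize the paper treats it as something requiring proof in this generality. For Case 3 the paper goes through the shadow lemma of \cite{DiophantineGeodesics}, i.e.\ $B_{d_o}(\xi_m^\infty, Re^{-d(o,x_m)}) \subset \cal{S}_o(B(x_m,R)) \subset B_{d_o}(\xi_m^\infty, c_1 Re^{-d(o,x_m)})$, which is a packaged form of your two-sided divergence estimate (iii). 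Your direct argument is a reasonable alternative, with the caveat that (iii) as you state it is a two-sided bound — the upper half is essentially Lemma \ref{Decreasing}, but the lower half (the CAT$(-1)$ comparison of the distance of two rays at time $t$ from below) is something the paper never proves and you would need to supply.

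The genuine gap is in your Case 2 forward direction. You assert that since $\gamma_{o,\xi}(t+l)$ is $2\delta_0$-close to $C$ at the right parameter, it is $O(\delta_0)$-close to $\gamma_{o,\eta}(t+l)$ for the $\eta\in\partial_\infty C$ ``on the side toward which $\gamma_{o,\xi}$ is heading.'' When $C$ is a geodesic line this is a two-element choice and the claim follows from thin triangles, but the paper explicitly allows $C$ to be, e.g., a totally geodesic $\H^{m+1}\subset\H^{n+1}$, in which case $\partial_\infty C$ is an $m$-sphere and ``the side toward which the ray is heading'' is not a well-defined pinpoint: you must first locate the nearest point $c\in C$ to $\gamma_{o,\xi}(t+l)$, argue that $c$ lies (up to bounded error) on a ray $[o_C,\eta]\subset C$ from the projection $o_C$ of $o$, and then show the parameters match up — a convexity argument that is not automatic. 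The paper avoids doing this in the text by delegating Case 2 to \cite{Parkkonen2}, Lemma 4.1, and by using its Lemma \ref{LemmaClosing} (if the two endpoints of a long segment are $D$-close to a geodesic, the middle lies in an $\e$-neighborhood for $\e\ll D$), which you do not invoke. This step of your sketch would need to be fleshed out, either by an analogue of Lemma \ref{LemmaClosing} adapted to convex sets, or by citing the external lemma as the paper does.
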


\noindent Hence, in view of \eqref{BadInterpretation}, 
for $i=1,2,3$, if every $\partial_{\infty}C_m \in \cal{C}^{\infty}_i$ is closed, we obtain that
\be
\nonumber
	\textbf{Bad}_X^{B_1}(\cal{F}_i) = S_i,
\ee
for the following sets $S_i$. 
\begin{itemize}
\item[1.] $S_1 \equiv  \{\xi \in X : 
	\exists\  l= l(\xi) < \infty $
	 such that the lengths%
	\footnote{ Note that since $C_m$ is convex, $\gamma_{o, \xi}(\R^+) \cap C_m$ is the image of a connected geodesic segment.}
	$L(\gamma_{o, \xi}(\R^+) \cap C_m) \leq l $ for all horoballs $C_m \in \cal{C}_1 \}$,
\item[2.] $S_2 \equiv  \{\xi \in X : 
	\exists\  l= l(\xi) < \infty $
	 such that the lengths
	$L(\gamma_{o, \xi}(\R^+) \cap \cal{N}_{2\d_0}(C_m)) \leq l $ for all totally geodesic sets $C_m \in \cal{C}_2 \}$,
\item[3.] $S_3 \equiv \{\xi \in X : 
	\exists\  c= c(\xi) >0$ such that $\gamma_{o, \xi}(\R^+) \cap B(x_m, c)=\emptyset$ for all points $x_m \in \cal{C}_3 \}$.
\end{itemize}

\begin{remark}
In view of Lemma \ref{LemmaClosing} below, we can in fact consider the $\e$-neighborhoods $\cal{N}_{\e}(C_m)$ of $C_m$ in $S_2$, for $\e>0$.
Moreover,  a geodesic $\gamma_{o, \xi}$, $\xi \in S_1$, has bounded penetration lengths in the collection of horoballs $\cal{C}_1$ 
if and only if it avoids the same collection of uniformly shrinked horoballs;
see Lemma \ref{RelationDeepPenetration} for a  precise statement.
\end{remark}


\subsubsection{A  model of Diophantine approximation in negatively curved spaces.}
\label{DANegative}
Let $\Gamma \subset I(Z)$ be a discrete subgroup of the isometry group $I(Z)$ of $Z$.
Note that every isometry $\varphi\in I(Z)$ extends to a homeomorphism on $\partial_{\infty}Z$.
The \emph{limit set} $\Lambda\Gamma$ of $\Gamma$ is the compact subset $\overline{ \Gamma. x} \cap \partial_{\infty}Z$ of $\partial_{\infty}Z$,  for any $x \in Z$.
If $\Lambda\Gamma$ contains at least two points, then $\cal{C}\Gamma$ denotes the convex hull of $\Lambda\Gamma$.

Recall that a subgroup $\Gamma_0$ of $\Gamma$ is called \emph{convex cocompact} 
if $\Lambda \Gamma_0$ contains at least two points and the action of $\Gamma_0$ on the convex hull $\cal{C}\Gamma_0$ has compact quotient.\\
We call $\Gamma_0$ \emph{bounded parabolic} if $\Gamma_0$ is the stabilizer of a parabolic fixed point $\xi_0 \in \Lambda\Gamma$, 
and if there exists a horoball $C_0$ at $\xi_0$ such that the action of $\Gamma_0$ on $\partial C_0$ has compact quotient.
Note that up to considering the CAT(-1)-space $Z \cap \Lambda\Gamma$ instead of $Z$, 
our definition agrees with the classical definition of bounded parabolic fixed points in the hyperbolic space; see \cite{Ratcliffe}. \\ 
Finally, we call $\Gamma_0$  \emph{almost malnormal} if $\Lambda \Gamma_0$ is precisely invariant,
that is,
if for all $\varphi \in \Gamma-\Gamma_0$ we have $\varphi. \Lambda\Gamma_0 \cap \Lambda \Gamma_0 = \emptyset$.

Now, let $\Gamma_{i} \subset  \Gamma$, $i=1,2$, be  almost malnormal subgroups in $\Gamma$ of infinite index and without elliptic elements.
We treat the following three cases simultaneously:
\begin{itemize}
\item[1.] Let $\Gamma_1$ be bounded parabolic and let $C_1$ be the horoball as in the definition.
\item[2.] Let $\Gamma_2$ be convex-cocompact and let  $C_2 = \cal{C}\Gamma_2$ be the convex hull of $\Gamma_2$,
where we assume that either
\begin{itemize}
\item[a)] $C_2$ is a geodesic line, or, 
\item[b)] every image $\varphi. \Lambda \Gamma_{2} \subset \Lambda\Gamma$, $[\varphi] \in \Gamma/\Gamma_2$, is contained in a metric sphere in $\partial_{\infty}Z$ 
		(with respect to $d_o$).
\end{itemize}
\item[3.] Let $\Gamma_3$ be the identity element of $\Gamma$, and, for $ x\in Z=\H^{n+1}$,  take $C_3=\{x\}$ in the following.
\end{itemize}
Note that since $\Gamma_i$ is almost malnormal for the Cases $1, 2.$ and $\Gamma$ is without elliptic elements for Case $3.$, we have $\Gamma_i = $ Stab$_{\Gamma}(C_i)$.
\\

\noindent \textbf{Example.}
Let $C_2$ be a totally geodesic submanifold of dimension  $m+1$ in $\H^{n+1}$, 
the hyperbolic ball model, and $o=0$ be the center of $\H^{n+1}$.
Hence, $C_2$ is a subspace isometric to the hyperbolic space $\H^{m+1}$ and the boundary of this subspace is a metric sphere  (with respect to the angle metric $d_o$) of dimension $m$.
Since $\Gamma_2$ is almost malnormal and convex cocompact, we have $m< n$.
Hence, $\partial_{\infty}C_2  = \Lambda \Gamma_2$ and every image $\varphi. \Lambda\Gamma_2$ are contained in metric spheres.
\\

For the respective cases, $i=1,2,3$, given again $t_0>0$, denote the data by
\be
\nonumber
	\cal{D}_i=(Z, \Gamma, C_i, o, t_0).
\ee
For $r=[\varphi] \in\Gamma/\Gamma_{i}$ we define
\be
\nonumber
	D_i(r) = d(o, \varphi (C_i))
\ee
which does not depend on the choice of the representative $\varphi$ of $r$. 

\begin{remark}
For $r=[\varphi] \in \Gamma/\Gamma_{i}$,  
let $\cal{S}_o(\varphi (C_i)) \subset \partial_{\infty}Z$ be the shadow of the set $\varphi( C_i)$ with respect to the base point $o$.
Using \eqref{Compare}, Lemma \ref{Decreasing} and \ref{Entering} below, 
one can show that if $D_i(r)$ is sufficiently large,
the size (diameter with respect to $d_o$) of the shadows $\cal{S}_o(\varphi C_i)$ is comparable to the quantitiy $e^{-D_i(r)}$. 
We therefore consider the \emph{approximation function} $f_i(r) = e^{D_i(r)}$ as a renormalization of the size of the shadows.
\end{remark}
 
Note that the set $\{D_i(r) : r\in   \Gamma/\Gamma_{i}\}$ is discrete and unbounded.

\begin{lemma}
\label{DiscreteCosets} 
For every $D \geq 0$ there are only finitely many elements $r\in \Gamma/\Gamma_{i}$ 
such that $D_i(r) \leq D$ and there exists an $r\in \Gamma/\Gamma_{i}$ such that $D_i(r)>D$.
\end{lemma}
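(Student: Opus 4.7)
The plan is to prove the two assertions separately, using cocompactness of $\Gamma_i$ on an appropriate set to reduce the first part to the proper discontinuity of the $\Gamma$-action on $Z$, and then deducing the second part from the assumption that $\Gamma_i$ has infinite index.

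For the first (finiteness) statement, I would first note that since $\Gamma_i = \text{Stab}_\Gamma(C_i)$ (by almost malnormality in Cases~1, 2 and triviality in Case~3), the map $[\varphi] \mapsto \varphi(C_i)$ is injective on $\Gamma/\Gamma_i$, so bounding the number of cosets amounts to bounding the number of translates $\varphi(C_i)$ that meet the closed ball $\bar B(o,D)$. In Case~2, $\Gamma_2$ acts cocompactly on $C_2 = \cal{C}\Gamma_2$; choose a compact fundamental domain $K_2 \subset C_2$ and a reference point $k_0 \in K_2$. Given a coset with $D_2(r) \leq D$, the closest point of $\varphi(C_2)$ to $o$ equals $\varphi(c)$ for some $c \in C_2$; writing $c = \gamma_0(k)$ with $k \in K_2$, $\gamma_0 \in \Gamma_2$, and setting $\varphi' \equiv \varphi\gamma_0$ (same coset), one obtains
\[
 d(o,\varphi'(k_0)) \leq d(o,\varphi'(k)) + d(k,k_0) \leq D + \text{diam}(K_2).
\]
Proper discontinuity of $\Gamma$ then leaves only finitely many such $\varphi'$. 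Case~3 is essentially immediate: $D_3(r) \leq D$ places $\varphi(x)$ in the compact ball $\bar B(o,D)$, and the orbit $\Gamma.x$ is discrete with finite stabilizers.

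Case~1 is the main technical obstacle, since $\Gamma_1$ only acts cocompactly on the horosphere $\partial C_1$, not on the horoball $C_1$ itself. When $o \notin \varphi(C_1)$, the unique closest point lies on $\varphi(\partial C_1)$ and the Case~2 argument goes through with a compact fundamental domain $K_1 \subset \partial C_1$ for $\Gamma_1$. The delicate subcase is $o \in \varphi(C_1)$: here the closest point trick gives no information. I would handle it by invoking bounded parabolicity directly, together with the fact that $\Gamma \cdot o$ is discrete and that $\Gamma_1$ acts cocompactly on the horosphere: any horoball translate $\varphi(C_1)$ containing $o$ is determined by the coset $[\varphi]$ via its parabolic fixed point $\varphi(\xi_1)$, and the number of such fixed points within a visual neighborhood of $o$ is finite, as follows from the combination of the discreteness of $\Gamma \cdot o$ and the structure of bounded parabolic fixed points.

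For the second assertion, since $\Gamma_i$ is assumed to have infinite index in $\Gamma$, the coset space $\Gamma/\Gamma_i$ is infinite. If every coset satisfied $D_i(r) \leq D$, the first part would yield only finitely many cosets, a contradiction. Hence for every $D \geq 0$ there exists $r \in \Gamma/\Gamma_i$ with $D_i(r) > D$, completing the proof.
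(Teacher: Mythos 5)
Your route is genuinely different from the paper's: the paper simply cites Parkkonen--Paulin (\cite{Parkkonen2}, Lemmas 3.1 and 3.2) for Case~2, notes the argument adapts to Case~1, and dispatches Case~3 by discreteness. Your direct argument via a compact fundamental domain and proper discontinuity is more self-contained, and for Cases~2 and~3, and for Case~1 when $o\notin\varphi(C_1)$, it is complete and correct. The derivation of the second assertion from the first (via infinite index) is also fine.

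The gap is exactly where you flag it: Case~1 with $o\in\varphi(C_1)$. You assert that the number of translates containing $o$ is finite ``as follows from the combination of the discreteness of $\Gamma\cdot o$ and the structure of bounded parabolic fixed points,'' but this is not an argument. Discreteness of $\Gamma\cdot o$ alone does not bound the \emph{depth} of $o$ inside $\varphi(C_1)$ uniformly over cosets; a priori the nearest point of $\varphi(\partial C_1)$ to $o$ could escape every compact set, and then neither the closest-point trick nor cocompactness of $\Gamma_1$ on $\partial C_1$ can be invoked. To close this one either needs (i) the fact that the collection $\{\varphi(C_1)\}_{[\varphi]\in\Gamma/\Gamma_1}$ is $(2\delta_0,T)$-embedded (which the paper establishes only \emph{after} Lemma~\ref{DiscreteCosets}, via \cite{HersonskyPaulin2}), which bounds the depth of $o$ in any translate by a constant $\tau_0$ and then lets you run the Case~2 argument on the exit point of $[\varphi(\xi_1),o]$ from $\varphi(C_1)$; or (ii) to shrink $C_1$ once and for all so the translates are pairwise disjoint (also done a few lines later in the paper), so that at most one coset has $o\in\varphi(C_1)$. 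As written, this subcase is a genuine hole, and since the embedding/disjointness facts are not yet available at the point where the Lemma is stated, you cannot silently appeal to them --- you would need to import the relevant part of \cite{HersonskyPaulin2} (as the paper's reference to \cite{Parkkonen2} implicitly does).
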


\begin{proof} For the second case, the proof follows from Lemma 3.1 and 3.2 in \cite{Parkkonen2}
with the difference that we do not consider the stabilizer of $o$ in $\Gamma$ (which is trivial in our assumption and only a finite subgroup in general).
The arguments of the proof also work for the first case.
The third case follows since $\Gamma$ is discrete and $\Gamma_3$ is of infinite index in $\Gamma$.
\end{proof}

\noindent Now, for $i=1,2,3$ and for $\xi \in \Lambda\Gamma - \Gamma. \Lambda\Gamma_{i}$ define the \emph{approximation constant}
\be
\nonumber
	c_i(\xi) = \liminf_{r=[\varphi] \in \Gamma/\Gamma_{i}: D_i(r)>t_0} e^{D_i(r)} d_o(\xi, \varphi.\Lambda \Gamma_i),
\ee
where we replace $\varphi.\Lambda\Gamma_i$ by $\varphi(x)_{\infty} \equiv \gamma_{o, \varphi(x)}(\infty)$  in the third case.
If $c_i(\xi)=0$ then $\xi$ is  called \emph{well approximable}, 
otherwise it is called \emph{badly approximable} (with respect to $\cal{D}_i$).
Define the set of badly approximable limit points  by
\be
\nonumber
	\textbf{Bad}(\cal{D}_i) = \{\xi \in \Lambda\Gamma - \Gamma.\Lambda \Gamma_{i} : c_i(\xi)>0 \} \subset \Lambda\Gamma.
\ee

Consider the collections $\cal{C}_i \equiv \{ \varphi(C_i): r=[\varphi] \in \Gamma/\Gamma_i $ with $D_i(r)>t_0 \}$, $i=1,2,3$,
and note that $\cal{C}_i$ is \emph{$(2 \delta_0, T)$-embedded}.
In fact, this follows easily for Case $3.$ since, by discreteness of $\Gamma$, $\cal{C}_3$ is in fact $\tau_0$-separated for some $\tau_0>0$.
For Case $2.$ we refer to \cite{HersonskyPaulin2} and remark that the proof works similarly for Case $1.$ 
In the first case, we will therefore assume, after shrinking $C_1$, that the images $\varphi (C_1)$, $[\varphi] \in \Gamma/\Gamma_1$ are pairwise disjoint.
Using Lemma \ref{DiscreteCosets}, we thus established the setting of the previous subsubsection for the corresponding cases.
Again, in view of \eqref{BadInterpretation}, we have for $X=\Lambda\Gamma$ and $\bar X= \partial_{\infty}Z$ that
\be
\nonumber
	\textbf{Bad}(\cal{D}_i) = \textbf{Bad}_{\Lambda\Gamma}^{B_1}(\cal{F}_i) = S_i.
\ee
Thus, as a corollary of Theorem \ref{GeodesicFlow}, we obtain the following.

\begin{corollary}
\label{GeodesicFlow2}
If the limit set $\Lambda\Gamma$ of $\Gamma$ is $\beta$-diffuse 
for the Cases $1.$ and $2a)$,
then \textbf{Bad}$(\cal{D}_i)$ is absolute-winning (in the sense of McMullen).

For the Case $2b)$, if $(\Lambda\Gamma \times (t_0, \infty), B_1)$ is $b_*$-diffuse with respect to the collection $\cal{S}$ of metric spheres in $\Lambda\Gamma$,
then \textbf{Bad}$(\cal{D}_2)$ is absolute $B_1$-winning with respect to $\cal{S}$.

If $\Lambda\Gamma$ is the support of a locally finite Borel measure satisfying a power law with respect to the exponent $\tau$,
then for any bi-Lipschitz map $F: S^n \to F(S^n)$,
$F(\textbf{Bad}(\cal{D}_3))$ is weakly $B_1$-winning in $F(\Lambda\Gamma)$ and of Hausdorff-dimension  $\tau$ (and in fact thick  in $F(\Lambda\Gamma)$).
\end{corollary}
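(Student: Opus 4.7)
The plan is to derive Corollary \ref{GeodesicFlow2} as a direct consequence of Theorem \ref{GeodesicFlow} applied to $X = \Lambda\Gamma \subset \bar X = \partial_\infty Z$, once the data $\cal{D}_i$ has been recast in the language of collections $\cal{C}_i$ and families $\cal{F}_i$ from the previous subsubsection. The identification of sets $\textbf{Bad}(\cal{D}_i) = \textbf{Bad}_{\Lambda\Gamma}^{B_1}(\cal{F}_i) = S_i$ was already recorded, so the remaining task is purely to verify that each of the three configurations of $\Gamma_i$ produces a collection $\cal{C}_i$ which fits into the corresponding case 1, 2, or 3 of the abstract setup, with discrete unbounded distance set $\{D_i(r)\}$ guaranteed by Lemma \ref{DiscreteCosets}.

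For Case 1, almost malnormality of the bounded parabolic subgroup $\Gamma_1$ together with discreteness of $\Gamma$ allows, after shrinking $C_1$ (which only enlarges $t_0$), to make the images $\{\varphi(C_1)\}_{[\varphi] \in \Gamma/\Gamma_1}$ pairwise disjoint horoballs, placing us in Case 1 of Theorem \ref{GeodesicFlow}; the first clause of that theorem then gives absolute-winning under $\beta$-diffuseness of $\Lambda\Gamma$. For Case 2, almost malnormality together with convex cocompactness yields the $(2\delta_0,T)$-embedded property of $\cal{C}_2$ by a standard argument (as in \cite{HersonskyPaulin2}). In subcase 2a), each $\varphi(\cal{C}\Gamma_2)$ is itself a geodesic line, so again the first clause of Theorem \ref{GeodesicFlow} applies. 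In subcase 2b), the assumption that each $\varphi.\Lambda\Gamma_2$ is contained in a metric sphere, combined with $b_*$-diffuseness of $\Lambda\Gamma$ with respect to metric spheres $\cal{S}$, is exactly what the second clause of Theorem \ref{GeodesicFlow} needs to conclude absolute $B_1$-winning with respect to $\cal{S}$.

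For Case 3, since $\Gamma$ is discrete and $\Gamma_3 = \{e\}$, the orbit $\Gamma.x$ is $\tau_0$-separated in $\H^{n+1}$, so the third clause of Theorem \ref{GeodesicFlow} furnishes the $(b_*, n_*, L_*)$-diffuse property of $(\Omega, B_1)$ with respect to $\cal{F}_3$ under the power-law hypothesis. Theorem \ref{Winning} then yields weak $B_1$-winning for $\textbf{Bad}(\cal{D}_3) = \textbf{Bad}_{\Lambda\Gamma}^{B_1}(\cal{F}_3)$; Proposition \ref{Preserving} (a bi-Lipschitz map satisfies \eqref{Lipschitz} with $L_*=0$ up to a shift of $b_*$) transfers this weak winning property to $F(\textbf{Bad}(\cal{D}_3))$; and Theorem \ref{Dimension} delivers the full Hausdorff-dimension bound $\tau$ and thickness, since the power law is preserved (up to constants) under the bi-Lipschitz push-forward of $\mu$.

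The main obstacle I expect is subcase 2b): one must check that the hypothesis ``every image $\varphi.\Lambda\Gamma_2$ lies on a metric sphere in $\partial_\infty Z$'' really translates into the condition that $\cal{F}_2$ is locally contained in $\cal{S}$ in the technical sense of \eqref{ContainedInSpheres}, namely that the enclosing sphere can be selected with $n_* = 1$ and $l_*$ bounded uniformly in $m$. This requires combining the shadow size estimate $d_o(\xi,\varphi.\Lambda\Gamma_2) \asymp e^{-D_2(r)}$ with the comparison between the diameter of the shadow and the relevant height $s_m$, both of which follow from the CAT($-1$) estimates already used in Proposition \ref{DistributionCAT(-1)} and Lemma \ref{DA-Dynamical}. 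Once this local containment is secured, Theorem \ref{DiffuseProposition} via Theorem \ref{GeodesicFlow} closes the argument.
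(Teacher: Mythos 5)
Your proof is correct and follows essentially the same route as the paper: after noting $\textbf{Bad}(\cal{D}_i) = \textbf{Bad}_{\Lambda\Gamma}^{B_1}(\cal{F}_i)$ and that $\cal{C}_i$ satisfies the disjointness / $(2\delta_0,T)$-embedded / $\tau_0$-separated property in the respective cases, the corollary is a direct specialization of Theorem \ref{GeodesicFlow} (plus Theorem \ref{Winning}, Proposition \ref{Preserving}, and Theorem \ref{Dimension} for Case 3), exactly as you argue. Your flagged concern about subcase 2b is handled implicitly in the paper: local containment in $\cal{C}_2^{\infty}$ with $n_*=1$ (established in the proof of Theorem \ref{GeodesicFlow}) combined with $\partial_{\infty}C_j$ being contained in a fixed metric sphere immediately gives local containment in $\cal{S}$ with the same $n_*$ and $l_*$, so Theorem \ref{DiffuseProposition} applies directly with $\cal{S}$ in place of $\cal{C}_2^{\infty}$.
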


\begin{remark}
Recall that if $X= \H^{n+1}$ and $\Gamma$ is a non-elementary finitely generated Kleinian group, 
then $\Lambda\Gamma \subset S^n$ is uniformly perfect; see \cite{JarviVuorinen}.
In particuar, $\Lambda\Gamma$ is $\beta$-diffuse for some $\beta>0$ by Lemma \ref{UniformlyPerfect}.
\\
For Case $2.$ we refer to Corollary \ref{DiffuseSpheresLimitSet} below, and for Case $3.$ to the next subsubsection.
 \end{remark}

\subsubsection{A measure on $\Lambda\Gamma$.}
\label{PattersonMeasure}
Let $X= \H^{n+1}$ be the hyperbolic ball model and let $o=0$ be the center.
Note that the visual distance $d_o$ is bi-Lipschitz equivalent to the angle metric on the unit sphere $S^n = \partial_{\infty}\H^{n+1}$.
Hence, if $\Gamma$ is of the \emph{first kind}, that is $\Lambda\Gamma=\partial_{\infty}\H^{n+1}$, 
then $\Lambda\Gamma= S^n$ is $\beta$-diffuse and $b_*$-diffuse with respect to $\cal{S}$, for $\beta=b_*>\log(3)$, 
and the Lebesgue measure on $S^n$ satisfies a power law with respect to the visual metric $d_0$ and the exponent $n$.
More generally, recall  that the \emph{critical exponent} of a discrete group $\Gamma \subset I(\H^{n+1})$ is given by
\be
\nonumber
	\delta(\Gamma) \equiv \inf \{s>0 : \sum_{\varphi \in \Gamma} e^{-s d(x, \varphi(x)) } < \infty\},
\ee
for any $x\in \H^{n+1}$.
Associated to $\Gamma$, there is a canonical measure, the \emph{Patterson-Sullivan} measure $\mu_{\Gamma}$, which is a $\delta(\Gamma)$-conformal probability measure supported on $\Lambda\Gamma$. 
For a precise definition we refer to \cite{Nicholls}.
If $\Gamma$ is non-elementary and convex-cocompact,
then $\delta(\Gamma)$ equals the Hausdorff-dimension of $\Lambda\Gamma$;
in particular, the Patterson-Sullivan measure $\mu_{\Gamma,o}$ (at $o$) satisfies a power law with respect to the exponent $\delta(\Gamma)$.
There are various further results concerning the Patterson-Sullivan measure.
Here, we point out the following.

Regarding Case $1.$, \cite{MayedaMerill} showed that if $\Gamma$ is a non-elementary geometrically finite Kleinian group, 
the set of limit points which correspond to geodesics starting in $o$ and projecting to bounded geodesics in $\H^{n+1}/\Gamma$ 
has dimension $\delta(\Gamma)$.
In particular, $S_1 = \textbf{Bad}(\cal{D}_1)$ contains this set and is thus of dimension $\delta(\Gamma)$.

For the second case, let $\cal{H}(\Gamma) \equiv \{ S \cap \Lambda \Gamma : S$ is a sphere in $S^n$ of codimension at least $1$$\}$
which contains the set $\cal{S}$.
A finite Borel measure $\nu$ on $S^n$ is called \emph{$\cal{H}(\Gamma)$-friendly},
if $\nu$ is Federer and if $(\Lambda\Gamma \times (t_0, \infty), B_1, \nu)$ is absolutely $(\delta, c_{\delta})$-decaying with respect to $\cal{H}(\Gamma)$.

\begin{theorem}[\cite{StratmannUrbanski}, Theorem 2]
\label{SU}
For every non-elementary convex cocompact discrete group $\Gamma \subset I(\H^{n+1})$ (without elliptic elements),
such that $\Lambda\Gamma$ is not contained in a finite union of elements of $\cal{H}(\Gamma)$,
the Patterson-Sullivan measure $\mu_{\Gamma,o}$ is $\cal{H}(\Gamma)$-friendly.
\end{theorem}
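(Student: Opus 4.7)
The plan is to verify the two requirements of $\cal{H}(\Gamma)$-friendliness separately: first the Federer property, then the absolute decay with respect to the family $\cal{H}(\Gamma)$ of subspheres intersected with $\Lambda\Gamma$. The Federer property will be a quick consequence of the global measure formula for Patterson-Sullivan measures on convex cocompact limit sets, while the sphere decay is the substantive content and will exploit the non-concentration hypothesis.

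For the Federer step, I would appeal to Sullivan's shadow lemma together with the compactness of $\cal{C}\Gamma/\Gamma$ to deduce a two-sided power law
\[
	c_1 r^{\delta(\Gamma)} \leq \mu_{\Gamma,o}(B(\xi,r)) \leq c_2 r^{\delta(\Gamma)}
\]
uniformly for $\xi \in \Lambda\Gamma$ and $r$ bounded above (equivalently, for all formal balls $(\xi,t) \in \Lambda\Gamma \times (t_0,\infty)$, the measure of $\psi(\xi,t)=B(\xi,e^{-t})$ is comparable to $e^{-\delta(\Gamma) t}$). Federerness is then immediate, since $\mu_{\Gamma,o}(B(\xi,3r))/\mu_{\Gamma,o}(B(\xi,r)) \leq (c_2/c_1)\,3^{\delta(\Gamma)}$.

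For the decay step, the target estimate is $\mu_{\Gamma,o}(B(\xi,e^{-t}) \cap \cal{N}_{e^{-(t+s)}}(S)) \leq c_\delta e^{-\delta s} \mu_{\Gamma,o}(B(\xi,e^{-t}))$ uniformly in $S \in \cal{H}(\Gamma)$ and $\xi \in \Lambda\Gamma$. The strategy is to first establish a \emph{local} non-concentration inequality: for every sphere $S \subset S^n$ of codimension at least one, there exists a neighborhood $B_0$ in $\Lambda\Gamma$ and a point $\eta_0 \in B_0 \cap \Lambda\Gamma$ at definite distance from $S$, so that a ball around $\eta_0$ is disjoint from a fixed neighborhood of $S$ and carries a definite fraction of $\mu_{\Gamma,o}(B_0)$ by the power law. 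The hypothesis that $\Lambda\Gamma$ is not contained in a finite union of elements of $\cal{H}(\Gamma)$ ensures that no such $B_0$ can be entirely absorbed by $S$, and a compactness argument on the Grassmannian of subspheres in $S^n$ promotes this to a \emph{uniform} estimate. One then bootstraps from scale $1$ to arbitrary scales by the conformal $\Gamma$-equivariance of $\mu_{\Gamma,o}$: any small ball $B(\xi,e^{-t})$ with $\xi \in \Lambda\Gamma$ can, up to bounded distortion, be mapped to a unit-scale ball by an element of $\Gamma$ (using convex cocompactness and the Margulis-type argument that orbit points come uniformly close to every geodesic ray from $o$). The conformal rule $\varphi_*\mu_{\Gamma,o} = |\varphi'|_o^{\delta(\Gamma)}\mu_{\Gamma,o}$ transfers the uniform local non-concentration bound to all scales, and iterating dyadically in $s$ produces the geometric decay rate $e^{-\delta s}$ for a suitable exponent $\delta >0$.

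The main obstacle will be establishing the \emph{uniformity} of the non-concentration constant across the family $\cal{H}(\Gamma)$, which is an infinite (but compact modulo parametrization) family of subspheres: this is precisely where the global hypothesis on $\Lambda\Gamma$ enters, and one must rule out sequences $S_k$ of spheres along which balls $B(\xi_k, r_k)\cap \Lambda\Gamma$ accumulate near $S_k$. A limiting argument, passing to a subsequential limit $S_\infty \in \cal{H}(\Gamma)$ and invoking upper semicontinuity of $\mu_{\Gamma,o}(\cal{N}_\rho(\cdot))$, reduces this to the contradiction that $\mu_{\Gamma,o}$ would be supported on $S_\infty$, which combined with $\Gamma$-invariance and ergodicity of the action on $\Lambda\Gamma$ would force $\Lambda\Gamma$ into a $\Gamma$-orbit of $S_\infty$ and hence into a finite union of spheres, contrary to hypothesis. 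Once this uniform constant is in hand, the transfer to all scales by conformality and the dyadic iteration are routine, and produce the desired absolutely $(\delta,c_\delta)$-decaying estimate.
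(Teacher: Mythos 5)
The paper itself provides no proof of this statement: it is imported verbatim as a citation to Stratmann--Urba\'nski (\cite{StratmannUrbanski}, Theorem 2), and immediately afterwards the author directs the reader to that reference for the definitions and details. So there is no argument in the paper against which to compare your proposal; what follows is an assessment of your sketch on its own terms.

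Your treatment of the Federer property is sound and is indeed the standard one: for a non-elementary convex cocompact group, Sullivan's shadow lemma together with the compactness of $\mathcal{C}\Gamma/\Gamma$ gives Ahlfors $\delta(\Gamma)$-regularity of $\mu_{\Gamma,o}$ on $\Lambda\Gamma$, i.e.\ the two-sided power law $c_1 r^{\delta(\Gamma)} \leq \mu_{\Gamma,o}(B(\xi,r)) \leq c_2 r^{\delta(\Gamma)}$, and Federer follows at once. Similarly, your plan for the decay estimate has the right overall shape: a local non-concentration inequality at unit scale, promoted to all scales via the conformal equivariance $\varphi_*\mu_{\Gamma,o} = |\varphi'|_o^{\delta(\Gamma)}\mu_{\Gamma,o}$ together with the fact that small balls centered on $\Lambda\Gamma$ can be conformally blown up to unit scale with bounded distortion (every limit point is conical), followed by a geometric iteration using Federer to control covering multiplicities.

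The genuine gap is in the step where you upgrade the local-concentration contradiction to a global one. From the upper semicontinuity argument you correctly conclude that $\mu_{\Gamma,o}$ restricted to some ball $B_0$ would be supported on a limiting subsphere $S_\infty$, hence $\Lambda\Gamma \cap B_0 \subset S_\infty$. You then invoke ``$\Gamma$-invariance and ergodicity'' to deduce that $\Lambda\Gamma$ lies in a finite union of spheres. But ergodicity only places $\mu_{\Gamma,o}$ on the $\Gamma$-orbit $\Gamma\cdot S_\infty$, which is a \emph{countable} union of spheres and does not directly contradict the hypothesis that $\Lambda\Gamma$ is not in a \emph{finite} union. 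The correct bridge is compactness of $\Lambda\Gamma$: from $\Lambda\Gamma \cap B_0 \subset S_\infty$ and conformal equivariance one gets $\Lambda\Gamma \cap \gamma B_0 \subset \gamma S_\infty$ for each $\gamma \in \Gamma$, and using a pair of loxodromic elements with distinct attracting fixed points one can produce finitely many $\gamma_1,\dots,\gamma_m$ such that $\{\gamma_i B_0\}$ covers $\Lambda\Gamma$; this yields $\Lambda\Gamma \subset \bigcup_i \gamma_i S_\infty$, a \emph{finite} union of elements of $\mathcal{H}(\Gamma)$, contradicting the hypothesis. Without this replacement the argument does not close.
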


\noindent The Theorem is in fact true for a set of $\mu_{\Gamma,o}$-neglectable subsets (for details and the definition we refer to \cite{StratmannUrbanski}) 
and the requirements of Case $2.$ can be weakened.

As a corollary of Proposition \ref{AbsDecMeasure} we obtain the following.

\begin{corollary}
\label{DiffuseSpheresLimitSet}
Let $\Gamma$ be as in Theorem \ref{SU}.
Then
$(\Lambda \Gamma \times (t_0, \infty), B_1)$ is $b_*$-diffuse with respect to $\cal{H}(\Gamma)$ and hence with respect to $\cal{S}$ for some $b_*>0$ sufficiently large.
\end{corollary}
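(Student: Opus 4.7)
The plan is to derive the corollary as a direct application of Proposition~\ref{AbsDecMeasure}, using Theorem~\ref{SU} to supply the absolute decay hypothesis. First, Theorem~\ref{SU} provides an $\cal{H}(\Gamma)$-friendly Patterson-Sullivan measure $\mu_{\Gamma, o}$ on $\Lambda\Gamma$; unpacking the definition of friendliness, this yields constants $\delta, c_{\delta} > 0$ such that $(\Lambda\Gamma \times (t_0, \infty), B_1, \mu_{\Gamma,o})$ is absolutely $(\delta, c_{\delta})$-decaying with respect to the whole collection $\cal{H}(\Gamma)$, and $\supp(\mu_{\Gamma,o}) = \Lambda\Gamma$, as required by the hypotheses of Proposition~\ref{AbsDecMeasure}.

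Next I would verify that $(\Lambda\Gamma \times (t_0, \infty), B_1)$ is $d_*$-separating with respect to $\cal{H}(\Gamma)$ for $d_* = \log 2$. Both implications in \eqref{SeparatingSpheres} reduce to routine triangle-inequality estimates for the standard function $B_1$ applied to closed sets: if $x \notin \bar\psi(S, t)$ then $d_o(x, S) > e^{-t}$, and a short calculation shows that $B(x, e^{-(t+\log 2)})$ is disjoint from $\bar\psi(S, t + \log 2)$, while the nesting implication $x \in \psi(y, t+d_*) \Rightarrow \psi(x, t+d_*) \subset \psi(y,t)$ follows from the triangle inequality applied to the visual metric. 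This works uniformly for every closed set $S \in \cal{H}(\Gamma)$, since each such $S$ is by definition the intersection of a metric sphere in $S^n$ with $\Lambda\Gamma$, hence closed.

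With both hypotheses in place, Proposition~\ref{AbsDecMeasure} yields that $(\Lambda\Gamma \times (t_0, \infty), B_1)$ is $b_*$-diffuse with respect to $\cal{H}(\Gamma)$ for every $b_* > \log(c_{\delta})/\delta + 2\log 2$. Since the collection $\cal{S}$ of metric spheres in $\Lambda\Gamma$ used in Corollary~\ref{GeodesicFlow2} (Case 2b)) is a subcollection of $\cal{H}(\Gamma)$, and since the diffusion condition \eqref{NbDiffuse} is manifestly monotone in the collection (being diffuse relative to a larger family implies being diffuse relative to any subfamily), the statement with respect to $\cal{S}$ follows at once. The substantive content is already packaged into Theorem~\ref{SU}, so no genuinely new obstacle arises; the only thing worth double-checking is that the notion of $\cal{S}$ from Case 2b) really coincides with the restriction of (a subfamily of) $\cal{H}(\Gamma)$ to $\Lambda\Gamma$, which is clear from the respective definitions.
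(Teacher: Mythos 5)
Your proposal is correct and follows precisely the route the paper intends: the corollary is explicitly stated ``as a corollary of Proposition \ref{AbsDecMeasure}'', with the absolute decay input coming from Theorem \ref{SU} (via the definition of $\cal{H}(\Gamma)$-friendly) and the separating hypothesis being a routine triangle-inequality check with $d_*=\log 2$, exactly as you carry out. Your additional observations---that $\supp(\mu_{\Gamma,o})=\Lambda\Gamma$, that $\cal{S}\subset\cal{H}(\Gamma)$, and that \eqref{NbDiffuse} is monotone in the collection---are all the right things to verify and are correct.
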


\noindent Hence, for Case $2.$,  $S_2=$ \textbf{Bad}$(\cal{D}_2)$ is absolute $B_1$-winning with respect to $\cal{S}$ by Corollary \ref{GeodesicFlow2}.
Moreover, since $\mu_{\Gamma,o}$ satisfies a power law, we see that $S_2$ is thick, by Theorem \ref{Dimension}. 

Summarizing, we have the following.
\begin{corollary}
In our setting of Subsubsection \ref{DANegative},
if $\Gamma$ is non-elementary geometrically finite Kleinian group in Case $1.$
or  as in Theorem \ref{SU} in Case $2.$, then $\textbf{Bad}(\cal{D}_i)$ is of Hausdorff-dimension $\delta(\Gamma)$ for $i=1,2$.
In fact, for any nonempty open set $U \subset \Lambda\Gamma$, 
$\textbf{Bad}(\cal{D}_2) \cap U$ is of Hausdorff-dimension $d_{\mu}(\Lambda\Gamma)= \delta(\Gamma) = $ dim$(\Lambda\Gamma)$.
\end{corollary}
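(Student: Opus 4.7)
The plan is to synthesise the previously established machinery separately for the two cases; in both, the upper bound $\dim \textbf{Bad}(\cal{D}_i) \leq \dim \Lambda\Gamma = \delta(\Gamma)$ is automatic since $\textbf{Bad}(\cal{D}_i) \subset \Lambda\Gamma$ and, for geometrically finite (respectively convex cocompact) Kleinian groups, Sullivan/Bishop--Jones give $\dim \Lambda\Gamma = \delta(\Gamma)$. So only the matching lower bound needs to be produced in each case.

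For Case 1, I would first invoke J\"arvi--Vuorinen to conclude that $\Lambda\Gamma$ is uniformly perfect, and then Lemma \ref{UniformlyPerfect} to upgrade this to $\beta$-diffuseness for some $\beta > 0$. The Case 1 part of Theorem \ref{GeodesicFlow} then ensures that $\textbf{Bad}(\cal{D}_1) = \textbf{Bad}_{\Lambda\Gamma}^{B_1}(\cal{F}_1) = S_1$ is absolute winning, so in particular it is thick in $\Lambda\Gamma$ in the topological sense. The lower bound $\dim S_1 \geq \delta(\Gamma)$ is then imported directly from the result of Mayeda--Merrill cited just above the corollary: for non-elementary geometrically finite Kleinian groups the set of limit points whose forward geodesics project to bounded orbits in $\H^{n+1}/\Gamma$ already has Hausdorff dimension $\delta(\Gamma)$, and this set is manifestly contained in $S_1$.

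For Case 2, the strategy is to feed the Patterson--Sullivan measure $\mu = \mu_{\Gamma,o}$ into Theorem \ref{Dimension}. Since $\Gamma$ is non-elementary and convex cocompact, $\mu$ has full support $\Lambda\Gamma$ and satisfies a power law with exponent $\tau = \delta(\Gamma)$. Theorem \ref{SU} yields $\cal{H}(\Gamma)$-friendliness, which in particular means that $(\Lambda\Gamma \times (t_0,\infty), B_1, \mu)$ is absolutely $(\delta, c_\delta)$-decaying with respect to the collection $\cal{S} \subset \cal{H}(\Gamma)$ of metric spheres; via Proposition \ref{AbsDecMeasure} this produces $b_*$-diffuseness with respect to $\cal{S}$, which is exactly the content of Corollary \ref{DiffuseSpheresLimitSet}. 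Combining this with Proposition \ref{DistributionCAT(-1)} Case 2, which says $\cal{F}_2$ is locally contained in $\cal{S}$, Theorem \ref{DiffuseProposition} promotes this to strong $\bar b_*$-diffuseness with respect to $\cal{F}_2$. Feeding the power law and strong diffuseness into Theorem \ref{Dimension} then gives
\[
\dim\bigl(\textbf{Bad}(\cal{D}_2) \cap U\bigr) \;\geq\; d_\mu(U) \;=\; \delta(\Gamma)
\]
for every nonempty open $U \subset \Lambda\Gamma$, and the automatic upper bound closes the argument.

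The main obstacle is not a technical step inside the synthesis but rather the bookkeeping needed to align the standing hypotheses on $\Gamma$ with the assumptions of the external inputs (J\"arvi--Vuorinen for uniform perfectness of $\Lambda\Gamma$, Mayeda--Merrill for the dimension of bounded orbits, Stratmann--Urba\'nski for friendliness of $\mu_{\Gamma,o}$, and Sullivan/Bishop--Jones for $\dim \Lambda\Gamma = \delta(\Gamma)$), together with the verification that (MSG1--2) hold in this setting so that Theorem \ref{Dimension} is applicable. Once these compatibilities are in place, every structural piece (diffuseness, decay, winning, dimension lower bound) slots in from the preceding sections with essentially no new calculation.
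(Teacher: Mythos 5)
Your proposal takes essentially the same route as the paper: for Case 1 the lower bound is imported from the Mayeda--Merrill bounded-orbit set contained in $S_1$, and for Case 2 the Patterson--Sullivan measure's power law plus Theorem \ref{SU}, Corollary \ref{DiffuseSpheresLimitSet} and Theorem \ref{Dimension} give $\dim(\textbf{Bad}(\cal{D}_2)\cap U)\geq d_\mu(U)=\delta(\Gamma)$, with the upper bound automatic from $\dim\Lambda\Gamma=\delta(\Gamma)$. The only superfluous step is the J\"arvi--Vuorinen / $\beta$-diffuse digression in Case 1, which supports the absolute-winning statement of Corollary \ref{GeodesicFlow2} but is not needed for the dimension claim itself.
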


\subsubsection{Proofs.} \label{ProofsCAT(-1)}

First, note that every CAT(-1) space is a (tripod) $\delta_0$-hyperbolic space for some $\delta_0>0$,
which implies that for all $ o, x, y\in Z $ we have
\be
\label{Tripod}
	p \in [o, x], q \in [o, y] \text{ with } d(o,p)=d(o,q) \leq (x,y)_o  \implies d(p,q) \leq \delta_0,
\ee
and \eqref{Tripod} also holds for $x,y \in \partial_{\infty}Z$ as well. 
Moreover, $Z$ is a (Gromov) $\delta_0$-hyperbolic space (we may assume the same $\delta_0$)
so that, given a geodesic triangle with vertices $x, y, z \in \bar Z $, every edge of the triange lies in the $\d_0$-neighborhood of the two other ones.
Finally, there exists a $\k>0$ (depending only on $\delta_0$), such that for all $o\in Z$ and $\xi, \eta \in \partial_{\infty}Z$,
\be 
\label{Compare}
	0 \leq d(o, [\xi, \eta]) - (\xi, \eta)_o \leq \k.
\ee

We start with the proofs of Proposition \ref{DistributionCAT(-1)} and Theorem \ref{GeodesicFlow}.

\begin{proof}[Proof of Proposition  \ref{DistributionCAT(-1)}]
We start with Cases $1.$ and $2.$
Given $\eta \in \partial_{\infty}C_{1} $, $\bar \eta \in \partial_{\infty}C_{2} $, where $C_1$ and $C_2 \in \cal{C}_i$, 
set $D\equiv \max\{ d(o, C_1), d(o, C_2)\} $, and
assume that 
\be
\label{A1}
	d_o(\eta, \bar\eta)  = e^{-(\eta, \bar \eta)_o} \leq 
	e^{- (D  + l_i)}.
\ee
Equivalently, we have $(\eta, \bar \eta)_o \geq D +l_i$ and by \eqref{Tripod}, we see that 
\be
\label{A2}
	d(\gamma_{o,\eta}( D + l_i), \gamma_{o,\bar \eta}( D + l_i)) \leq  \delta_0.
\ee
\emph{Case 1:}
By definition, $l_1=   \delta_0$ and hence, 
\be
\nonumber
	d(\gamma_{o, \eta}( D +  \delta_0), \gamma_{o,\bar  \eta}( D +  \delta_0)) \leq  \delta_0.
\ee 
On the other hand, 
both the points $\gamma_{ \eta}( D +  \delta_0)$ and $\gamma_{\bar \eta}( D + \delta_0)$ 
are contained in the horoballs $C_1 $ and $C_2$ respectively, 
at distance at least $\delta_0$ to the boundaries of the respective horoballs.
Therefore, if the horoballs $ C_1$ and $C_2$ are disjoint, then
\be
\nonumber
	d(\gamma_{o,\eta}( D + \delta), \gamma_{o,\bar \eta}( D +\delta_0)) \geq 2\delta_0,
\ee 
which is a contradiction. 
Hence, $C_1=C_2$ and $\{\eta\} = \{\bar \eta\}$.
\\
\emph{Case 2:}
By definition, $l_2=  T+ 2\delta_0$, and hence, 
\be
\nonumber
	d(\gamma_{ o,\eta}( D + T+ 2\d_0), \gamma_{o,\bar \eta}( D + T+ 2d_0)) \leq  \delta_0.
\ee 
Let $o_j$ be the projection of $o$ on the closed convex set $C_j$ with $d(o, o_j) = d(o,C_j)$, $j=1,2$.
For any point $x$  on the ray $\gamma_{o, \eta }$ at distance $d(o, x) > D + \delta_0$ and $y$ on $[o, o_1]$, we have
$d(x, y) \geq d(o, x) - d(y, o) > \delta_0$.
Since $Z$ is a Gromov $\delta_0$-hyperbolic space, we see that $x$ cannot belong the $\delta_0$-neighborhood of the segment $[o, o_1]$
and must therefore be contained in $\cal{N}_{\d_0}([o_1, \eta])$.
Since $C_1$ is convex, $o_1 \in C_1$ and $\eta \in \partial_{\infty}C_1$, we have $[o_1, \eta] \subset C_1$.
Thus, we showed
\be
\nonumber
	\gamma_{o,\eta}( [D + \delta_0, D + T+ 2\delta_0])  \subset \cal{N}_{ \delta_0} ([o_1, \eta]) \subset \cal{N}_{ \delta_0} (C_1),
\ee 
and the analogous results is true for $\gamma_{o,\bar \eta}$.
Therefore, by convexity of the distance function and by \eqref{A2}, we have 
\be
\nonumber
	\gamma_{o, \eta}( [D+\delta_0, D + T+ 2\d_0]) \subset \cal{N}_{2 \delta_0} (C_2),
\ee 
and hence
\be
\nonumber
	\gamma_{o,\eta}( [D+\delta_0, D + T + 2\d_0]) \subset \cal{N}_{ \d_0} (C_1) \cap \cal{N}_{2 \d_0} (C_2).
\ee 
In particular, since $\cal{C}_2$ is $(2\d_0, T)$-embedded and
\be
\nonumber
	\text{diam}(\cal{N}_{2\d_0}(C_1) \cap \cal{N}_{2 \d_0}(C_2))\geq L(\gamma_{o,\eta}( [D+\d_0, D + T + 2\d_0])) = T + \d_0 >T,
\ee
we must have $C_1=C_2$ and $ \eta, \bar \eta \in \partial_{\infty} C_1$.

Now, for \emph{Case 3.,}
we switch to the hyperbolic space.
For a subset $M\subset S^n$ and $0\leq a \leq\bar a$, consider the truncated cone of $M$ with respect to $o$,
\be
\nonumber
	M(a,\bar a) \equiv \{ \gamma_{o, \xi} (t) \in \H^{n+1} : \xi \in M, a\leq t \leq \bar a \}.
\ee
Fix $b>0$, a ball $B=B_{d_o}(\eta, 2e^{-t})$ (with $t\geq b$) and 
note that a point $\xi_m^{\infty}$ with $t-b< d(o, x_m) \leq t$ lies in $B$ if and only if  $x_m \in B(t-b, t)$.
It therefore suffices to estimate the number of $x_m \in B(t-b,t)$ which we denote by $G(\eta, t, b)$.

First, we claim  that $B(t-b, t)$ is contained in
the $(\delta_0 + 2\log(2))$-neighborhood of the geodesic segment $\gamma_{o, \eta}((t-b,t])$.
To see this, note that for any point $\xi \in B$,
we have $(\xi, \eta)_0 \geq - \log(d_o(\xi, \eta)) \geq t - \log(2)$ and hence, by \eqref{Tripod} 
$d(\gamma_{o, \xi}(s), \gamma_{o, \eta}(s)) \leq \d_0$, for all $s \leq t - \log(2)$.
For $t-\log(2) \leq s \leq t$ we have 
\bea
\nonumber
	d(\gamma_{o, \xi}(s), \gamma_{o, \eta}(s)) &\leq& d(\gamma_{o, \xi}(s), \gamma_{o, \xi}(t-\log(2))) + \delta_0 + d(\gamma_{o, \eta}(s), \gamma_{o, \eta}(t-\log(2))) 
	\\ \nonumber
	&\leq& \delta_0 + 2\log(2),
\eea
concluding the claim.

Clearly, since $\H^{n+1}$ is of constant sectional curvature, 
there exists a universal constant $C>0$ such that the hyperbolic volume of $\cal{N}_{\d_0 + 2\log(2)}(\gamma_{o, \eta}((t-b,t]))$ is bounded by $C\cdot b$.
Since moreover $\cal{C}_3$ is $\tau_0$-separated for some $\tau_0>0$,
it also follows that there exists a constant $\bar c = \bar c(\tau_0)>0$ such that the   (hyperbolic) volume of every ball $B(x_m, \tau_0/2)$ is at least $\bar c$.
Thus, we conclude that $G(\eta,r,b)\leq C/\bar c \cdot b$, finishing the proof.
\end{proof}

\begin{proof}[Proof of Theorem \ref{GeodesicFlow}]
For Case $1.$ and Case $2.$, when every set $C_m$ is a geodesic line, assume that $X$ is $\beta$-diffuse.
For the second case we need to remark that for distinct points $\eta$, $\bar \eta \in \partial_{\infty}C$, for a geodesic line $C \in \cal{C}_2$,
then by \eqref{Compare} we have  
\be
\nonumber
	d_o(\eta, \bar \eta) = e^{-(\eta, \bar \eta )_o} \geq e^{- d(o, [\eta, \bar \eta])} = e^{-d(o, C)}.
\ee 
Hence, using this remark and Proposition \ref{DistributionCAT(-1)}, we see that the special case \ref{Distinct} is satisfied for $c_i = e^{-l_i}$ and $\sigma=1$.
Moreover, $(\bar X, d_o)$ is compact so that $(\bar \Omega, B_1)$ is $\log(3)$-contracting.
Thus, Proposition \ref{Sufficient} implies that  $(\Omega, B_{1})$ is strongly $\bar b_*$-diffuse with respect to $\cal{F}_i$, where $\bar b_*=  l_i + \log(3) + \log(2) + \beta$.
In addition, \textbf{Bad}$_X^{B_1}(\cal{F}_i)$ is absolute-winning (in the sense of McMullen) in the respective cases.

For Case $2.$ in general,
it follows from Proposition \ref{DistributionCAT(-1)}  that, given a ball $B=B_{d_o}(\xi, e^{-(t + l_* + \log(2)})$, $\xi \in X$, $t>t_0$,
then for every size $s_m\leq t$ we have that $B \cap R_m $ is either empty or equals the set $B \cap \partial_{\infty}C_j$ for some set $C_j \in \cal{C}_2$.
We showed that $\cal{F}$ is locally contained in $\cal{C}_2^{\infty}$ for $n_*=1$.
Since $(\Omega, B_1)$ is $b_*$-diffuse with respect to $\cal{C}_2^{\infty}$,
Theorem \ref{DiffuseProposition}
shows that $(\Omega, \psi)$ is strongly $\bar b_*$-diffuse with respect to $\cal{F}_2$ where $\bar b_*=l_2 + \log(2) + \log(3) + b_*$,
and, moreover, that \textbf{Bad}$_X^{B_1}(\cal{F}_2)$ is absolute $B_1$-winning with respect to $\cal{C}_2^{\infty}$.
Finally,  the same is true for Case $1.$, and Theorem \ref{Winning} concludes the first two cases.

For Case $3.$, 
we want to show that $(\Omega, B_1)$ is $(b_*, n_*, L_*)$-diffuse with respect to $\cal{F}$ for every $n_*\in \N$ and $L_*\geq 0$.
In fact, assume that $X = $ supp$(\mu)$ for a locally finite Borel measure on $S^n$ which satisfies a power law with respect to $\tau$.
Given a ball $B=B(\xi, e^{-t})$, $\xi \in X$, $t>t_0$,
consider the set of boundary projections $\xi_m^{\infty} \in \cal{C}_3^{\infty}$ with $d(o, x_m)\in (t-b, t]$  and $\xi_m^{\infty} \in 2B=B(\xi, 2e^{-t})$.
This is precisely the set $2B \cap R^3(t, b)$ and 
Proposition \ref{DistributionCAT(-1)} implies that $\lvert 2B \cap R^3(t,b) \rvert$ is bounded by $ c_0 \cdot b$. 
Moreover, a ball $B(\xi_{m}^{\infty}, e^{-(t+s)})$ with $\xi_{m}^{\infty}\not\in 2B$ cannot intersect $B$.
Hence, we have
\bea
\nonumber
	\mu(B \cap \cal{N}_{e^{-(t+s)}}(R^3(t,b) )) 
		&\leq& \cup_{\xi_m^{\infty} \in 2B \cap R^3(t,b)}  \mu( B_{d_o}(\xi_m^{\infty} , e^{-(t +s)})) 
		\\ \nonumber
		&\leq&  C b \cdot c_2 e^{-\tau(t +s)}
		\\ \nonumber
		&\leq& \tfrac{C c_2}{c_1} b \cdot e^{-\tau s} \mu(B) \equiv f(b,s) \mu(B),
\eea
which shows that $(\Omega, B_1, \mu)$ is $f$-decaying with respect to the family $\cal{F}_3$.
Clearly, there exists $b_*= b_*(n_*, L_*, \tau, \tau_0)> 2 \log(3)$ sufficiently large such that 
the function $f$ satisfies $f( n_*(b + L_*) + \log(3), b- 2\log(3)) \leq c_0 <1$ for all $b>b_*$.
Since $R^3(t,b)$ is a discrete set for all $t,b>0$,
 $(\Omega, B_1)$ is $\log(3)$-separating with respect to $\cal{F}_3$.
 Thus, Proposition \ref{DecayingMeasure} concludes that $(\Omega, B_1)$ is $(\bar b_*, n_*, L_*)$-diffuse with respect to $\cal{F}$, where $\bar b_* =  b_* + \log(3)$.

Since \eqref{Separating} and (MSG1-2) are satisfied, we have  
dim(\textbf{Bad}$_X^{B_1}(\cal{F}_3)\cap U) = d_{\mu}(U) = \tau$ from Theorem \ref{Dimension}, for $U \subset X$ open.
This finishes the proof.
\end{proof}

We will make use of the following results.

\begin{lemma}[\cite{PaulinParkkonen2}, Lemma 2.1]
\label{Decreasing}
Let $x$, $y\in Z$ and for $z\in Z\cup \partial_{\infty}Z$ let $\gamma=[x,z]$.
Then, for all $t\in [0, d(x,z)]$, 
\be
\nonumber
	d(\gamma(t), [y,z]) \leq  \tfrac{1}{2} e^{d(x,y)-t}.
\ee
\end{lemma}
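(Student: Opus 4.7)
The plan is to reduce the inequality to a computation in $\H^2$ via the CAT$(-1)$ comparison and then verify it explicitly in the upper half-plane model.

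First, I would construct a comparison triangle $(\bar x, \bar y, \bar z)$ in $\H^2$: in the case $z \in Z$, the usual comparison triangle with all three side lengths preserved; in the case $z \in \partial_\infty Z$, take $\bar z$ as an ideal point of $\H^2$, with the configuration determined by $d(\bar x, \bar y) = d(x,y)$ and the matching of Busemann differences $\beta_{\bar z}(\bar y) - \beta_{\bar z}(\bar x) = \beta_z(y) - \beta_z(x)$. Setting $\bar \gamma := [\bar x, \bar z]$ and $\bar p := \bar \gamma(t)$, the CAT$(-1)$ inequality yields $d(\gamma(t), q) \leq d(\bar p, \bar q)$ for every $q \in [y,z]$ with comparison point $\bar q \in [\bar y, \bar z]$ at distance $d(y, q)$ from $\bar y$. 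Taking the infimum over $q$ reduces the problem to
\[
d(\gamma(t), [y, z]) \leq d_{\H^2}(\bar p, [\bar y, \bar z]).
\]

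Next, I would work in the upper half-plane model. In the ideal case, normalize so that $\bar z = \infty$ and $\bar x = (0, 1)$; then $\bar \gamma$ is the vertical ray upward, $\bar p = (0, e^t)$, and $\bar y = (a, b)$ with $a \in \R$ and $b > 0$, so $[\bar y, \bar z]$ is the vertical ray $\{(a, s) : s \geq b\}$. From the distance formula $\cosh D = 1 + (a^2 + (1-b)^2)/(2b)$, with $D := d(\bar x, \bar y)$, one obtains $a^2 = 4 b \sinh^2(D/2) - (1-b)^2$; an elementary maximization over the admissible range $b \in [e^{-D}, e^D]$ yields $|a| \leq \sinh D$, attained at $b = \cosh D$. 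The orthogonal projection of $\bar p$ onto the vertical line $\{X = a\}$ is $(a, \sqrt{a^2 + e^{2t}})$, at hyperbolic distance $\mathrm{arcsinh}(|a|/e^t)$ from $\bar p$.

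When $\sqrt{a^2 + e^{2t}} \geq b$, so the foot lies on the ray, the elementary inequalities $\mathrm{arcsinh}(s) \leq s$, $|a| \leq \sinh D$, and $\sinh D \leq \tfrac{1}{2} e^D$ combine to give
\[
d_{\H^2}(\bar p, [\bar y, \bar z]) \leq \tfrac{|a|}{e^t} \leq \tfrac{\sinh D}{e^t} \leq \tfrac{1}{2} e^{D - t}.
\]
When the foot lies below $\bar y$, the closest point on $[\bar y, \bar z]$ to $\bar p$ is $\bar y$ itself, and one verifies the bound by a direct computation of $d_{\H^2}(\bar p, \bar y)$ using the coordinate formula from the previous step together with the elementary inequality $s \leq \tfrac{1}{2} e^s$ (valid for $s \geq 0$). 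The finite case $z \in Z$ is handled by the analogous computation with $\bar z$ placed at a convenient finite point, or by a continuity argument in the limit $d(\bar x, \bar z) \to \infty$.

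The main obstacle I expect is the sub-case where the orthogonal foot lies below $\bar y$: there the estimate depends on the actual position of $\bar y$ (through the value of $b$) rather than on $|a|$ alone, and the Busemann-matching condition from the first step must be used to control $d(\bar p, \bar y)$ tightly enough to absorb the factor of $\tfrac{1}{2}$ on the right-hand side.
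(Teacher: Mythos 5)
The paper does not prove this lemma; it is cited as \cite{PaulinParkkonen2}, Lemma 2.1, with no argument supplied, so there is no in-paper proof to compare against. Your reduction to $\H^2$ by CAT$(-1)$ comparison followed by an explicit upper-half-plane computation is a natural strategy, and your central estimate for the ideal case with the foot on the ray is correct: $|a|\leq\sinh D$, $d(\bar p,\{X=a\})=\mathrm{arcsinh}(|a|e^{-t})\leq\sinh(D)e^{-t}\leq\tfrac12 e^{D-t}$. The sub-case you single out as the main obstacle (foot below $\bar y$) is in fact not an obstacle: substituting $a^2=2b\cosh D-b^2-1$ into the distance formula gives
$\cosh d(\bar p,\bar y)=e^{-t}\cosh D+\tfrac{e^{2t}-1}{2e^t b}$,
which is nonincreasing in $b$, so its supremum over the region $b>\sqrt{a^2+e^{2t}}$ is attained as $b\downarrow\sqrt{a^2+e^{2t}}$, i.e.\ exactly when the foot coincides with $\bar y$ --- and there you have already proved the bound. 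Monotonicity then propagates it to all larger $b$; no Busemann-matching input is needed.

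The genuine gap is the finite case $z\in Z$. Your first suggestion (``analogous computation with $\bar z$ at a convenient finite point'') is not analogous: once $\bar z$ is finite, the side $[\bar y,\bar z]$ is a generic circular arc in the upper half-plane rather than a vertical ray, so the projection computation changes materially. Your second suggestion (``continuity argument in the limit $d(\bar x,\bar z)\to\infty$'') does not apply either, since a fixed finite triangle is not a limit of ideal ones, so the ideal case does not subsume the finite one. And the finite case is not a curiosity: it is precisely how the lemma is invoked in the proof of Lemma~\ref{LemmaClosing}, where $z$ is a finite projection point on $\alpha$. One workable route for the finite case: drop a perpendicular from $\bar p$ to the geodesic through $\bar y,\bar z$ and use the right-triangle identity $\sinh\delta=\sinh(L_1-t)\sin\alpha$ (with $\alpha$ the angle at $\bar z$, $L_1=d(\bar x,\bar z)$), the law-of-sines bound $\sin\alpha\leq\sinh D/\sinh L_1$, and the elementary inequality $\sinh(L_1-t)\leq e^{-t}\sinh L_1$, then dispose of the two boundary cases where the foot leaves the segment (there the nearest point is $\bar z$, resp.\ $\bar y$, and the bounds $L_1-t\leq D-t\leq\tfrac12 e^{D-t}$, resp.\ the same monotonicity trick as above, close the argument). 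As written, your proof covers only $z\in\partial_\infty Z$.
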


\noindent 
If $\e>0$ and $\alpha$ is a geodesic segment, let $\cal{N}_{\e}(\alpha)$ be the closed $\e$-neighborhood of $\alpha$ which is itself convex.
As a consequence of Lemma \ref{Decreasing}, 
we prove that a ray which penetrates in the $D$-neighborhood of a geodesic segment for a sufficiently long time must also penetrate in its $\e$-neighborhood.

\begin{lemma} 
\label{LemmaClosing}
Let $D \geq \e>0$.
Let $\gamma$ and $\alpha$ be two geodesics in $X$ such that $d(\gamma(-L) , \alpha) \leq D$ and $d(\gamma(L), \alpha) \leq D$, 
where $L\geq 2(D - \log(\e))$.
Then there exists a constant $c=c(D, \e) \leq D - \log(\e)$ such that  $\gamma([-L+c, L-c]) \subset \cal{N}_{\e}(\alpha)$.
\end{lemma}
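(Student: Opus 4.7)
The plan is to iterate Lemma \ref{Decreasing} twice and then invoke the convexity of $\cal{N}_\e(\alpha)$ (noted immediately before the statement): once $\gamma(-L+c), \gamma(L-c)$ are shown to lie in $\cal{N}_\e(\alpha)$ for the claimed $c$, convexity forces every intermediate point of the geodesic segment $\gamma([-L+c, L-c])$ to lie there as well.

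Set $p = \gamma(-L)$, $q = \gamma(L)$, and let $p', q' \in \alpha$ be the nearest-point projections, so $d(p, p') \leq D$ and $d(q, q') \leq D$. Reparameterize by $\beta(t) = \gamma(-L + t)$, $t \in [0, 2L]$. The case $D \leq \e$ is trivial, since $p, q \in \cal{N}_\e(\alpha)$ and convexity yields the lemma with $c = 0$, so assume $D > \e$. A first application of Lemma \ref{Decreasing} to the geodesic $[p,q]$ with $y = p'$ and $z = q$ yields, for each $t$, a point $r_t \in [p', q]$ with $d(\beta(t), r_t) \leq \tfrac{1}{2} e^{D-t}$. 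A second application, to $[q, p']$ parameterized by arclength from $q$, with $y = q'$ and $z = p'$, gives $d(r_t, [q', p']) \leq \tfrac{1}{2} e^{D - s_t}$, where $s_t := d(q, r_t) \geq (2L - t) - \tfrac{1}{2} e^{D-t}$ by the triangle inequality. Since $[q', p'] \subset \alpha$, combining the two bounds yields
\begin{equation*}
d(\beta(t), \alpha) \leq \tfrac{1}{2} e^{D-t} + \tfrac{1}{2} e^{D - s_t}.
\end{equation*}

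Set $c := D - \log\e$. At $t = c$ the first term equals $\e/2$, and the lower bound on $s_c$ together with the hypothesis $L \geq 2(D - \log\e) = 2c$ gives $s_c \geq 2L - c - \e/2 \geq c$; here one uses the elementary inequality $D - \log\e \geq \e/4$ (valid under $D \geq \e$, since $3\e/4 \geq \log\e$ for all $\e > 0$), so the second term is also $\leq \e/2$. Thus $d(\beta(c), \alpha) \leq \e$, and the symmetric argument, obtained by interchanging the roles of $p$ and $q$, gives $d(\beta(2L - c), \alpha) \leq \e$. Convexity of $\cal{N}_\e(\alpha)$ then forces $\gamma([-L+c, L-c]) = \beta([c, 2L-c]) \subset \cal{N}_\e(\alpha)$, completing the proof. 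The only delicate point, and the reason the hypothesis carries the factor $2$ in $L \geq 2(D - \log\e)$, is this simultaneous control: one needs $t = c$ far enough from $0$ that $e^{D-t}$ is small and simultaneously $2L - c$ far enough from $0$ that $e^{D-s_c}$ is small, and these two constraints happen to collapse onto the single choice $c = D - \log\e$ precisely when $L$ exceeds twice this quantity.
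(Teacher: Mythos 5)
Your proof is correct and takes essentially the same route as the paper: both split the quadrilateral with vertices $\gamma(\pm L)$ and their nearest-point projections onto $\alpha$ into two triangles along a diagonal, apply Lemma \ref{Decreasing} once per triangle, and finish with convexity of $\cal{N}_{\e}(\alpha)$. The only difference is bookkeeping, as you chain the two estimates via the triangle inequality to bound $d(\gamma(\pm(L-c)), \alpha)$ directly, whereas the paper separately bounds the distance of a point on the diagonal $\tilde\gamma$ to both $\gamma$ and $\alpha$ and then transfers the conclusion to its projection on $\gamma$ before invoking convexity.
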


\begin{proof}
First, consider the case when $\gamma$ and $\alpha$ do not intersect.
Let $p$ and $q \in \alpha$ be the closest points of $a=\gamma(L)$ and $b=\gamma(-L)$ respectively at distance at most $D$ on $\alpha$.
We subdivide the quadrilateral $(a,b, p,q)$ in two geodesic triangles $(a, b, p)$ and $(b, p,q)$ with a connecting geodesic $\tilde \gamma=[b, p]$.
Note that $\tilde L \equiv d(b, p) \geq 2L - D$.
For $t\in [0, \tilde L]$, we let  $b_t \in \gamma$ and $q_t \in \alpha$ be the closest points of $\tilde \gamma(t)$ on $\gamma$ and $\alpha$ respectively.
Let $t_0= D -  \log(\e) $.
From Lemma \ref{Decreasing} we have
$d(\tilde \gamma(t_0), q_{t_0}) \leq e^{-t_0}e^{D}/2= \e/2$,
as well as, since $L\geq 2(D - \log(\e))$,
\be
\nonumber
	d(\tilde \gamma(t_0), b_{t_0}) \leq \tfrac{1}{2} e^{-(\tilde L-t_0)} e^{D} \leq \tfrac{1}{2} e^{-2L+D+ \log(\e)} \leq \tfrac{\e}{2}.
\ee
Thus, $d(b_{t_0}, \alpha) \leq \e$.
Note that $d(\gamma(L), b_{t_0})\leq t_0$ by properties of the closest point map.
In the same way, we define $a_{t_0}$ for the two geodesic triangles $(a, b, q)$ and $(a, p,q)$.
Similarly, we obtain that also $d(a_{t_0}, \alpha) \leq \e$ with $d(\gamma(-L), a_{t_0})\leq t_0$.
Therefore, we see by convexity of the distance function that
$\gamma([-L+t_0, L-t_0]) \subset [a_{t_0}, b_{t_0}] \subset \cal{N}_{\e}(\alpha)$.

The case when $\gamma$ and $\alpha$ intersect follows from the same arguments (and is simpler).
\end{proof}

\begin{lemma}[\cite{PaulinParkkonen2}, Lemma 2.9]
\label{Entering}
Let $C_0$ be a horoball in $Z$ and $o\in Z-C_0$.
Then, for two geodesic rays starting in $o$ and entering in $C_0$ at $x$ and $\bar x$ respectively,
we have
\be 
\nonumber
 	d(x, \bar x) \leq 2\log(1+\sqrt{2}) \equiv c_0 .
\ee
\end{lemma}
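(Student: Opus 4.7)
My plan is to reduce the statement to an explicit computation in the hyperbolic plane $\H^2$ and then transfer the bound via the CAT($-1$) comparison inequality. The CAT($-1$) hypothesis is what makes the transfer go through, and the number $2\log(1+\sqrt{2}) = \text{arccosh}(3)$ is the sharp value coming from the extremal configuration in $\H^2$.

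First, I would carry out the model computation in $\H^2$. Work in the upper half-plane with $C_0' = \{\Im z \geq 1\}$, based at $\infty$, and place $o' = (0, e^{-T})$ so that $d(o', C_0') = T$. Every geodesic ray from $o'$ is a Euclidean semicircle perpendicular to the real line (or a vertical line), and a direct calculation shows that those entering $C_0'$ are precisely the rays whose endpoint on $\partial_\infty \H^2$ lies outside a certain arc. The two extremal (tangent) rays touch $\partial C_0'$ at $(\pm\sqrt{1-e^{-2T}},1)$, and any pair of entry points $x', \bar x'$ of genuine entering rays lies strictly between. The standard hyperbolic distance formula on the horosphere $\{y=1\}$ gives
\[
\cosh d(x', \bar x') \leq 1 + 2(1 - e^{-2T}) = 3 - 2 e^{-2T} < 3,
\]
so $d(x', \bar x') < \text{arccosh}(3) = 2\log(1+\sqrt{2})$, with the bound tight in the limit $T \to \infty$.

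Next, I would transfer to the CAT($-1$) space $Z$. Let $\xi \in \partial_\infty Z$ be the basepoint of the horoball $C_0$ and let $p$ be the unique nearest point of $C_0$ to $o$, so $d(o,p) = T$. The CAT($-1$) condition gives comparison with ideal triangles in $\H^2$: for the configuration with ideal vertex $\xi$ and finite vertices $o$ and $x$, take the comparison configuration $(o',\xi',x')$ in $\H^2$ with $d(o,x)=d(o',x')$ and matching Busemann values $\beta_\xi(o)-\beta_\xi(x)=\beta_{\xi'}(o')-\beta_{\xi'}(x')$. This normalization forces $x'$ to lie on the model horosphere $\partial C_0'$ and places $o'$ at Busemann height $T$, precisely the setup of Step 1. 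Since Alexandrov angles at $o$ in $Z$ are bounded above by the comparison angles in $\H^2$ (and the same for $\bar x$), one obtains $d(x,\bar x) \leq d(x',\bar x')$. Combining with Step 1 yields $d(x,\bar x) \leq 2\log(1+\sqrt 2) = c_0$.

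The main obstacle is the rigorous setup of the CAT($-1$) comparison with an ideal vertex at $\xi$: the usual comparison theorem is stated for finite geodesic triangles, and one needs the version for generalized (ideal) triangles with Busemann functions playing the role of distance to the ideal vertex. This is standard (Bridson--Haefliger, Chapter II.4, or the exposition in Parkkonen--Paulin) but must be invoked carefully, and one must verify that the extremal entering configuration in $Z$ really corresponds to the tangent limit in the $\H^2$ model, rather than being bounded only by some suboptimal comparison.
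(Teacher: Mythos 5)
Your reduction to the upper half-plane model is correct and pins down exactly where the constant comes from: with $C_0' = \{\Im z \geq 1\}$ and $o' = (0,e^{-T})$, the entry points of rays from $o'$ lie on the sub-arc $\{(a,1) : |a| \leq \sqrt{1-e^{-2T}}\}$ of the horosphere, giving $\cosh d(x',\bar x') \leq 3 - 2e^{-2T} < 3 = \cosh\big(2\log(1+\sqrt 2)\big)$, with equality in the limit $T\to\infty$. That part is fine and identifies the sharp value.

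The gap is in the CAT$(-1)$ transfer. You set up two independent comparison triangles $(o',\xi',x')$ and $(o',\xi',\bar x')$ for the ideal triangles $(o,\xi,x)$ and $(o,\xi,\bar x)$, and then assert that angle comparison at $o$ yields $d(x,\bar x) \leq d(x',\bar x')$. This does not follow. What the CAT$(-1)$ angle comparison gives you is a bound on the Alexandrov angle in $Z$, namely $\angle_o([o,x],[o,\bar x]) \leq \angle_o([o,x],[o,\xi]) + \angle_o([o,\xi],[o,\bar x]) \leq \angle_{o'}([o',x'],[o',\xi']) + \angle_{o'}([o',\xi'],[o',\bar x'])$. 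But the quantity that controls $d(x,\bar x)$ via the hyperbolic law of cosines (with sides $d(o,x)$ and $d(o,\bar x)$) is the \emph{comparison} angle at $o$ of the finite triangle $(o,x,\bar x)$, and CAT$(-1)$ only tells you that this comparison angle is $\geq$ the Alexandrov angle, with no a priori upper bound. In other words, an upper bound on $\angle_o$ in $Z$ yields, through the law of cosines, only a \emph{lower} bound on $d(x,\bar x)$; to get an upper bound you would need an upper bound on the comparison angle of $(o,x,\bar x)$, which you have not established. So "one obtains $d(x,\bar x)\leq d(x',\bar x')$" is a non sequitur. (The appeal to the angle "at $\bar x$" is also unclear: the two comparison triangles you built are glued along $[o',\xi')$, not along a side containing $\bar x'$, so there is no angle comparison at $\bar x$ available from this configuration.)

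To close the gap one has to work with the entry conditions at $x$ and $\bar x$ themselves, i.e.\ the inequalities $\angle_x([x,o],[x,\xi]) \geq \pi/2$ and $\angle_{\bar x}([\bar x,o],[\bar x,\xi]) \geq \pi/2$ that encode that the segments $[o,x]$, $[o,\bar x]$ do not enter $C_0$ before hitting $\partial C_0$, together with a comparison for the ideal triangle $(x,\bar x,\xi)$ (whose two finite vertices lie on the horosphere) and the angle-sum inequality for the finite triangle $(o,x,\bar x)$ — or else a direct hyperbolic-trigonometry argument that does not pass through a single global comparison configuration. As written, the transfer from $\H^2$ to $Z$ is the missing piece of the proof.
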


If $\tau\geq 0$ and $C_0=\beta^{-1}((-\infty,0])$ is a (closed) horoball with respect to the Busemann function $\beta$, let 
$C_0[\tau] \equiv \beta^{-1}((-\infty,-\tau]) = \{x \in C_0 : d(x, \partial C_0) \geq \tau\}\subset C_0$ 
denote the horoball shrinked by the factor $\tau$.
Let $o\in X-C_0$ and assume that for $\xi \in \partial_{\infty}X$ the ray $\gamma_{o, \xi}$ enters in $C_0$.
Define the \emph{shrinking parameter} of $\xi$ by $s(\xi) = \sup \{\tau \in [ 0, \infty] : \gamma_{o,\xi} \cap C_0[\tau] \neq \emptyset \}$.
Then the ray $\gamma_{o, \xi}$ penetrates the horoball $C_0$ for a long time if and only if it enters deeply into $C_0$, that is, its shrinking parameter is large.

\begin{lemma}
\label{RelationDeepPenetration}
Let $o\in Z - C_0$. 
Assume that for $\xi \in \partial_{\infty}Z$ the ray $\gamma_{o, \xi}$ enters in $C_0$ at time $t \geq 0$ and leaves at time $t+p$, $0<p<\infty$.
Let $s\geq 0$ be the shrinking parameter of $\xi$.
Then
\be
\nonumber
	2s - c_0 \leq p \leq 2s + 2c_0.
\ee
\end{lemma}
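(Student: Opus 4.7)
The plan is to split $p = p_1 + p_2$, where $p_1 = d(a,q)$, $p_2 = d(q,b)$, $a = \gamma_{o,\xi}(t)$, $b = \gamma_{o,\xi}(t+p)$, and $q$ is the deepest point of $\gamma_{o,\xi}|_{[t,t+p]}$, at which the Busemann function $\beta := \beta_{\xi_0, a}$ (normalized so that $\beta(a) = 0$) attains its minimum value $-s$. Note that $\beta(b) = 0$ and $C_0[\tau] = \{\beta \leq -\tau\}$ for $\tau \geq 0$.

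For the lower bound, I would use that $\beta$ is $1$-Lipschitz in any CAT(-1) space, yielding
\[
p_1 \geq |\beta(a) - \beta(q)| = s, \qquad p_2 \geq s,
\]
and hence $p \geq 2s \geq 2s - c_0$; this is in fact strictly stronger than the claimed bound, the $-c_0$ slack in the statement being unnecessary.

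For the upper bound $p \leq 2s + 2c_0$, I would bound $p_1 \leq s + c_0$ (and, by a symmetric argument, $p_2 \leq s + c_0$) using Lemma \ref{Entering} applied to the shrunk horoball $C_0[s]$. Since $\beta(a) = 0 > -s$, the point $a$ lies outside $C_0[s]$. From $a$ I consider two rays entering $C_0[s]$: the continuation $[a,\xi]$ of $\gamma_{o,\xi}$, and the vertical ray $\gamma_{a,\xi_0}$. By convexity of $\beta \circ \gamma_{o,\xi}$ on $[t,t+p]$ (valid in CAT(-1)), $\beta \circ \gamma_{o,\xi}$ is non-increasing up to its minimum at $q$, so $[a,\xi]$ first meets $\partial C_0[s]$ at $q$ (after passing to $C_0[s-\epsilon]$ and letting $\epsilon \to 0^+$ in order to handle the tangential contact). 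Meanwhile $\beta$ decreases at unit rate along $\gamma_{a,\xi_0}$, so $\gamma_{a,\xi_0}$ first meets $\partial C_0[s]$ at $\tilde q := \gamma_{a,\xi_0}(s)$ with $d(a,\tilde q) = s$. Lemma \ref{Entering} then gives $d(q,\tilde q) \leq c_0$, and the triangle inequality yields
\[
p_1 = d(a,q) \leq d(a,\tilde q) + d(\tilde q, q) \leq s + c_0.
\]
The completely symmetric argument from $b$, using the backward continuation of $\gamma_{o,\xi}$ through $q$ together with the vertical $\gamma_{b,\xi_0}$, produces $p_2 \leq s + c_0$, and summing yields $p \leq 2s + 2c_0$.

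The main subtlety I anticipate is the tangential nature of the contact: strictly speaking, $[a,\xi]$ only touches $\partial C_0[s]$ at $q$ rather than crossing it transversally, so ``entering $C_0[s]$ at $q$'' must be interpreted via the $\epsilon$-shrinking limit described above, in which the first entry points of $[a,\xi]$ and $\gamma_{a,\xi_0}$ into $C_0[s-\epsilon]$ converge to $q$ and $\tilde q$ respectively; otherwise the argument is a direct double application of Lemma \ref{Entering}.
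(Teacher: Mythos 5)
Your proof is correct, and it is a genuine variant of the paper's argument. Both proofs decompose $p = p_1 + p_2$ at the deepest point $q = \xi_s$, and both drive the upper bound through Lemma~\ref{Entering}, but there are two noticeable differences. First, for the lower bound you invoke the $1$-Lipschitz property of the Busemann function directly, giving $p_1 \geq s$ and $p_2 \geq s$, hence $p \geq 2s$. The paper instead derives its lower bound on $p_1$ by comparing the entry times of $\gamma_{o,\xi}$ and $\gamma_{o,\eta}$ (via Lemma~\ref{Entering} at base point $o$), which forces the $-c_0$ slack through the chain $t \leq d_o + c_0$, $t + p_1 \geq d_o + s$. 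Your observation that the stated $2s - c_0$ can be replaced by $2s$ is therefore correct, and your derivation is the cleaner one. Second, for the upper bound you apply Lemma~\ref{Entering} from the base points $a$ and $b$ (comparing the continuation of $\gamma_{o,\xi}$ against the vertical rays $\gamma_{a,\eta}$ and $\gamma_{b,\eta}$), whereas the paper applies it from $o$ for $p_1$ and from $b$ for $p_2$. Your choice makes the treatment of $p_1$ and $p_2$ fully symmetric, and using $d(a,\tilde q) = s$ exactly (rather than $d(o,\gamma_{o,\eta}(d_o+s)) = d_o + s$ filtered through the slack in $t$) is what removes the asymmetry. Your explicit $\epsilon$-shrinking treatment of the tangential contact at $q$ is also a worthwhile addition: the paper uses Lemma~\ref{Entering} at the level $C_0[s]$ without comment, which is exactly the tangential situation you flag. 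One minor notational slip: you write $\xi_0$ for the base point of $C_0$ without introducing it (the paper calls it $\eta$); that should be fixed, but the meaning is unambiguous.
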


\begin{proof}
Let $C_0$ be based at the point $\eta \in \partial_{\infty}Z$, $\eta\neq \xi$, and let $d_o = d(o, C_0)\geq 0$ such that $\gamma_{o, \eta}(d_o)\in \partial C_0$.
Note that the function $s\mapsto \beta \circ \gamma_{o, \xi}(s)$ is  continuous and convex.
Hence, there exists a point $\xi_s\equiv \gamma_{o, \xi}(t+p_1)$ on $\partial C_0[s] = \beta^{-1}(-s)$.
By Lemma \ref{Entering}, we have
$d(\gamma_{o, \xi}(t), \gamma_{o, \eta}(d_o)) \leq c_0$ as well as 
$d(\xi_s, \gamma_{o, \eta}(d_o+s)) \leq c_0$.
Note that for all $\tau \geq 0$,  $\gamma_{o, \eta}(d_o+\tau)$ is the closest point of $o$ to $\partial C_0[\tau]$.
Hence, $d_o \leq t \leq d_o + c_0$ as well as 
\be
\nonumber	
	d_o+s \leq t+p_1 \leq d_o + c_0 + s.
\ee
Starting with the point $\tilde o = \gamma_{o, \xi}(t+p) \in \partial C_0$ with $d_{\tilde o}= d(\tilde o, C_0)=0$,
we obtain in the same way by Lemma \ref{Entering} that $s \leq p_2 \equiv p-p_1 \leq s + c_0 $.
Thus,
\bea
\nonumber
	2s- c_0 \leq 2s +d_o - t &\leq& p_1 +p_2 = p 
	\\ \nonumber
	&\leq& d_o-t + 2 s+  2c_0 \leq 2 s+  2c_0,
\eea
which finishes the proof.
\end{proof}

Finally, we are able to prove Lemma \ref{DA-Dynamical}.

\begin{proof}[Proof of  Lemma \ref{DA-Dynamical}]
For the first case, 
given a horoball $C$ based at $\eta$ and a point $\xi \in \partial_{\infty}Z$,
assume  that $\gamma_{o, \xi}([t, t+l] )\subset C$.
We may assume that $t$ is the entering and $t+ l$ the exiting time.
Then from Lemma \ref{RelationDeepPenetration}, $l \leq 2s + 2c_0$, 
where $s$ denotes the shrinking parameter of $\xi$ in $C$.
Moreover, if $x$ is the closest point of $o$ on $[\xi, \eta]$, 
we claim that $d(o, x) \geq d(o, C[s]) - 2\d_0$.
Assuming the claim,  we have
\be
\nonumber
	d(o, x) \geq d(o, C[s]) -2\d_0 = d(o, C) + s \geq d(o,C) + l/2 - c_0 - 2\d_0,
\ee
and it follows from \eqref{Compare} that
\bea
\nonumber
	d_o(\xi, \eta) &=& e^{- (\xi,\eta)_o} \leq 
	e^{-d(o, x) + \k} \leq e^{- l/2 + (\k + c_0 + 2\d_0)  } e^{-d(o,C)}.
\eea
For the claim, assume that $d(o, x) < d(o,C[s]) - 2\d_0$.
Consider the geodesic triangle given by $(o, x, \xi)$.
For any point $y$  on the ray $[o, \xi]$ at distance $d(o, y) > d(o,x) + \delta_0$ and $z$ on $[o,x]$, we have
$d(y,z) \geq d(o, y) - d(z, o) > \delta_0$.
Since $Z$ is a Gromov $\delta_0$-hyperbolic space, we see that $y$ cannot belong the $\delta_0$-neighborhood of the segment $[o, x]$
and must therefore be contained in $\cal{N}_{\d_0}([x, \xi])$.
Thus, let $x_s$ be a point on $\partial C[s] \cap [o, \xi]$ and note that $d(o,x_s) \geq d(o,C[s]) \geq d(o, x) + 2\d_0$.
From the above, we find a point $y$ on $[x, \xi]$ which is $\d_0$-close to $x_s$ and hence, $y \in C[s- \d_0]$.
Howeover, by convexity of the horoball $C[s-\d_0]$ and since $\eta = \partial_{\infty}C[s-\d_0]$, we have
\be
\nonumber
	[x, \eta] \subset [y, \eta] \subset C[s-\d_0].
\ee 
This shows $d(o,x) = d(o, [x, \eta]) \geq d(o,C[s-\d_0]) = d(o, C[s]) - \d_0$; a contradiction implying the claim.

Conversely, let $d_o(\xi, \eta) \leq c(l) e^{-d(o,C)}$ with $c(l) \leq \bar c e^{-l/2}$ where $\bar c>0$ is sufficiently large.
Set $t := D(o,C) + \delta_0$ and 
\be
\nonumber
	t+ \bar l \equiv -\log(d_o(\xi, C)) =  (\xi,\eta)_o 
	\geq \log(\bar c) + d(o,C) + l/2.
\ee
For $l>l_0=l_0(\bar c)$ sufficiently large, we have $t+ \bar l >t$.
Since $t+\bar l = (\xi, \eta)_o$, we have from \eqref{Tripod} that 
$d(\gamma_{o, \xi} (t+\bar l), \gamma_{o, \eta} (t+\bar l)) \leq  \delta_0$,
and by convexity also,
$d(\gamma_{o, \xi} (t), \gamma_{o, \eta} (t)) \leq  \delta_0$.
Thus, we obtain that
\be
\nonumber
	\gamma_{o, \xi} ([t, t+\bar l]) \subset 
	\cal{N}_{ \delta_0} ( \gamma_{o,\eta} ([d(o,C) + \delta_0, \infty))) \subset C.
\ee

The second case follows with similar arguments using Lemma \ref{LemmaClosing}.
The proof can be found in \cite{Parkkonen2}, Lemma 4.1.

For the third case, from Lemma 3.1 in \cite{DiophantineGeodesics}, 
there exist positive (universal) constants $c_1$, $c_2$, $c_3$  such that for all 
$x_m \in \H^{n+1}$, with  $d(o, c) \geq c_2$ (which we may assume if $t_0$ is sufficiently large), 
for all $0<R\leq c_3$ and $R\leq d(o, x_m)$ we have
\be
\nonumber
	B_{d_o}(\xi_m^{\infty}, Re^{-d(o,x_m)}) \subset \cal{S}_o(B(x_m,R)) 
	\subset B_{d_o}(\xi_m^{\infty}, c_1Re^{-d(o,x_m)}),
\ee
where $\cal{S}_o(B(x_m,R)) $ denotes the shadow at infinity of the metric ball $B(x_m, R)$ which is disjoint to $\{o\}$.
\end{proof}

\bibliographystyle         {plain}      
\bibliography{cup_ref}

 \end{document}